\newtheorem{thm}{Theorem}[section]
\newtheorem{cor}[thm]{Corollary}
\newtheorem{lem}[thm]{Lemma}
\newtheorem{prop}[thm]{Proposition}
\theoremstyle{definition}
\newtheorem{defn}[thm]{Definition}
\theoremstyle{remark}
\newtheorem{rem}[thm]{Remark}
\numberwithin{equation}{section}
\newcommand{\Z}{\mathbb Z}
\newcommand{\C}{\mathbb C}
\newcommand{\R}{\mathbb R}
\newcommand{\Pro}{\mathbb P}
\newcommand{\gr}{\mathrm{gr}}
\font \rus= wncyr10
\newcommand{\sha}{\, \hbox{\rus x} \,}
\newcommand{\Ho}{\mathcal{H}}
\newcommand{\MT}{\mathcal{MT}}
\newcommand{\M}{\mathcal{M}}
\newcommand{\Isom}{\mathrm{Isom}}
\newcommand{\zetam}{\zeta^{ \mathfrak{m}}}
\newcommand{\zetaa}{\zeta^{ \mathfrak{a}}}
\newcommand{\Q}{\mathbb Q}
\newcommand{\Li}{\mathrm{Li}}
\newcommand{\Lo}{\mathcal{L}}
\newcommand{\U}{\mathcal{U}}
\newcommand{\To}{\longrightarrow}
\newcommand{\A}{\mathbb{A}}
\newcommand{\G}{\mathbb{G}}
\newcommand{\tone}{\overset{\rightarrow}{1}\!}
\newcommand{\opi}{{}_0 \Pi_{1}}
\newcommand{\opo}{{}_0 \Pi_{0}}
\newcommand{\opz}{{}_0 \Pi_{z}}
\newcommand{\Aut}{Aut}
\newcommand{\dR}{\mathfrak{dr}}
\newcommand{\Or}{\mathcal{O}}
\newcommand{\UMT}{\mathcal{U}_{dR}}
\newcommand{\circb}{\, \underline{\circ}\, }
\newcommand{\Pemp}{ \!\Pe^{\mm, +}}
\newcommand{\mm}{\mathfrak{m} }
\newcommand{\am}{\mathfrak{a} }
\newcommand{\HH}{\mathbb{H} }
\newcommand{\Ao}{\mathcal{A} }
\newcommand{\id}{\mathrm{id} }
\newcommand{\ooi}{{}_0 1_1}
\newcommand{\ooz}{{}_0 1_z}
\newcommand{\pim}{ \pi_{\am,\mm+}}
\newcommand{\pidram}{ \pi_{\am, \dR}}
\newcommand{\LL}{\mathbb{L}}
\newcommand{\Lefschetz}{Lefschetz }
\newcommand{\LM}{{\LL}^{\mm}}
\newcommand{\LA}{{\LL}^{\am}}
\newcommand{\LDR}{{\LL}^{\dR}}
\newcommand{\Zo}{Z_o}
\newcommand{\per}{\mathrm{per}}
\newcommand{\Spec}{\mathrm{Spec} \,}
\newcommand{\PP}{\mathfrak{P}}
\newcommand{\Pe}{\mathcal{P}}
\newcommand{\sv}{\mathrm{sv}}
\newcommand{\svm}{\sv^{\mm}}
\newcommand{\dch}{\mathrm{dch}}
\newcommand{\Hom}{\mathrm{Hom}}
\newcommand{\hookdownarrow}{\mathrel{\rotatebox[origin=c]{-90}{$\hookrightarrow$}}}
\begin{document}
\date{3rd September 2013}
\author{Francis Brown}
\begin{title}[Single-valued motivic periods]{Single-valued periods and multiple zeta values}\end{title}
\maketitle
 \begin{abstract} The values at $1$ of single-valued multiple polylogarithms span a certain subalgebra of multiple zeta values.
 In this paper, the properties of this algebra are studied from the  point of view of motivic periods.
 \end{abstract}

 \section{Introduction}
 The goal of this paper is to study a special class of multiple zeta values which occur as the  values at $1$ of single-valued multiple polylogarithms. The latter were defined in 
\cite{BSVMP}  and generalize the Bloch-Wigner dilogarithm
 \begin{equation} \label{BW} 
 D(z) = \mathrm{Im}( \Li_2(z) + \log |z| \log (1-z))
 \end{equation} 
which is a single-valued version of $\Li_2(z)$,  to  the case of all multiple polylogarithms in one variable. These are defined for any integers $n_1,\ldots, n_r\geq 1$ by 
 $$\Li_{n_1,\ldots, n_r} (z)= \sum_{0 < k_1 < \ldots <  k_r} {z^{k_r}  \over n_1^{k_1}\ldots n_r^{k_r}}$$
 and are iterated integrals on $\Pro^1 \backslash \{0,1,\infty\}$ obtained by integrating along the straight line path from $0$ to $1$ along the real axis.  In the convergent case $n_r\geq 2$, their values at one are precisely 
Euler's  multiple zeta values 
\begin{equation} \label{introMZV} 
\zeta(n_1,\ldots, n_r ) = \sum_{0 < k_1 < \ldots < k_r} {1 \over n_1^{k_1}\ldots n_r^{k_r}}\ .
\end{equation} 
 The values at one of the single-valued multiple polylogarithms define an interesting sub-class of multiple zeta values, which we denote by 
 \begin{equation} \label{introSVMZV}Ê
 \zeta_{\sv}(n_1,\ldots, n_r )\in \R \ .
 \end{equation} 
They satisfy  $\zeta_{\sv}(2)=D(1)=0$, as one  immediately sees  from $(\ref{BW})$. 
 These numbers are, in a precise sense, the values of iterated integrals  on $\Pro^1 \backslash \{0,1,\infty\}$ which are obtained by integrating from $0$ to $1$ \emph{independently of all choices of path}.
   
    The numbers $(\ref{introSVMZV})$     have
  recently found several applications in physics, in:
   \begin{enumerate}
 \item O. Schnetz'  theory of graphical functions for  Feynman amplitudes \cite{SG}
  \item The coefficients of the closed super-string tree-level amplitude \cite{SS}
 \item Wrapping functions\footnote{I was informed  by D. Volin that \cite{Wrap} equation (63) is a single-valued MZV to order $g^{18}$.}  in $N=4$ super Yang-Mills \cite{Wrap}, 
 \end{enumerate}
 as well as in \cite{Duhr,  Dixon}, 
 and also in mathematics as the coefficients of Deligne's associator.  A general theme seems to be that a large class of (but not all)  Feynman integrals in 4-dimensional renormalisable quantum field theories lie in the subspace of single-valued multiple zeta values. This raises an interesting possibility of replacing general amplitudes with their single-valued versions (see \S3), 
 which should lead to considerable simplifications. 
 \vspace{0.1in}

  \subsection{Contents} In \cite{BMTZ}, motivic multiple zeta values $\zetam(n_1,\ldots, n_r)$ were defined as elements of a certain graded algebra $\Ho$, equipped with a period homomorphism
 $$\per: \Ho \To \C$$
 which maps $\zetam(n_1,\ldots, n_r)$ to $\zeta(n_1,\ldots, n_r)$. In this paper, 
 motivic versions of the single-valued numbers $(\ref{introSVMZV})$, denoted $\zetam_{\sv}(n_1,\ldots, n_r)$,  are defined.  They generate a subalgebra $\Ho^{\sv}\subset \Ho$.
  Its main properties can be summarized as follows.
 
 \begin{thm} There is a natural homomorphism 
 $\Ho \rightarrow \Ho^{\sv}$ which sends $\zetam(n_1,\ldots, n_r)$ to $\zetam_{\sv}(n_1,\ldots, n_r)$.  In particular, the $\zetam_{\sv}(n_1,\ldots,n_r)$ satisfy all motivic relations
 for multiple zeta values, together with the relation $\zetam_{\sv}(2)=0$.
 
 The algebra $\Ho^{\sv}$ is isomorphic to the polynomial algebra 
  generated by 
  $$\zetam_{\sv} (n_1,\ldots, n_r)$$
 where $n_i \in \{2,3\}$ and $(n_1,\ldots, n_r)$ is a Lyndon word (for the ordering $3<2$) of odd weight.
 Furthermore,  $\Ho^{\sv}$ is preserved under the action of the motivic Galois group.
 \end{thm}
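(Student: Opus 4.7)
The plan is to define $\svm$ as a natural algebra endomorphism of $\Ho$ using the motivic coaction and a real-Frobenius section, verify $\svm(\zetam(2))=0$ directly, and then identify $\Ho^{\sv} = \svm(\Ho)$ by combining the Hoffman basis theorem of \cite{BMTZ} with a Lie-algebra argument on the dual side.

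\emph{Construction of $\svm$ and first part.} The de Rham motivic Galois group $\UMT$ has coordinate ring $\Ae = \Ok(\UMT)$, and the motivic coaction provides $\Delta : \Ho \to \Ho \otimes \Ae$. The Betti--de Rham comparison composed with complex conjugation gives a canonical group-like element of $\UMT$, hence a Hopf algebra morphism $\iota : \Ae \to \Ho$. Set
\[
\svm = m \circ (\id \otimes \iota) \circ \Delta : \Ho \longrightarrow \Ho.
\]
Multiplicativity of $\Delta$ and of $\iota$ makes $\svm$ a ring homomorphism. Defining $\zetam_{\sv}(w) := \svm(\zetam(w))$ and $\Ho^{\sv} := \svm(\Ho)$ produces the natural surjection $\Ho \twoheadrightarrow \Ho^{\sv}$ asserted in the theorem. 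The vanishing $\svm(\zetam(2)) = 0$ reduces, via the coaction formula for $\zetam(2)$ (a primitive class sitting in a pure Tate block), to the annihilation of this Tate class by $\iota$---the motivic lift of $D(1)=0$ from $(\ref{BW})$. All motivic MZV relations are automatically transported across $\svm$, yielding the first clause.

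\emph{Polynomial structure.} By Brown's Hoffman basis theorem \cite{BMTZ}, $\Ho \cong \Q[\zetam(w) : w \text{ Lyndon in } \{2,3\}]$. Consequently $\Ho^{\sv}$ is generated by the $\zetam_{\sv}(w)$ for such $w$. Two points remain: (a) every Lyndon $w$ of even weight has $\zetam_{\sv}(w)$ decomposable into strictly lower-weight $\zetam_{\sv}(w')$, and (b) the odd-weight generators are algebraically independent. For (a), I would dualise: the motivic Lie algebra $\gmt$ of $\UMT$ is free on odd-weight generators $\sigma_{2k+1}$, and the involution on $\gmt$ induced by real Frobenius exhibits $\Ho^{\sv}$ as the image of a projection onto an ``odd'' Lie subspace; a combinatorial analysis then shows that even-weight Lyndon dual generators map into the decomposable part. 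For (b), a Hilbert-series comparison, with dimensions of $\Ho^{\sv}$ computed via the Ihara action of $\gmt$, forces the natural surjection
\[
\Q[\zetam_{\sv}(w) : w \text{ Lyndon in } \{2,3\},\ |w| \text{ odd}] \twoheadrightarrow \Ho^{\sv}
\]
to be an isomorphism. The \emph{main obstacle} is step (a): the weight-parity cancellation in $\svm$ on even-weight Hoffman generators is delicate and forms the technical crux of the theorem.

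\emph{Galois stability.} Since $\Delta$ is $\UMT$-equivariant by construction and $\iota$ is a Hopf algebra map, $\svm$ intertwines the $\UMT$-action on both copies of $\Ho$, so $\Ho^{\sv} = \svm(\Ho)$ is a $\UMT$-stable subalgebra of $\Ho$.
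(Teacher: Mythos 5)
Your overall strategy---build $\svm$ from the motivic coaction together with a lift of complex conjugation, then combine the Hoffman basis theorem of \cite{BMTZ} with a Lie coalgebra argument---does parallel the paper's. But there are substantive gaps that would break the argument as you have written it.

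\textbf{(1) The proposed formula cannot give $\zetam_{\sv}(2)=0$.} You set $\svm=m\circ(\id\otimes\iota)\circ\Delta$ acting on $\Ho$. But $\zetam(2)$ is a rational multiple of $(\LM)^2$, and $\LM$ is $\UMT$-invariant (equation $(\ref{LMcoaction})$), so the $\UMT$-coaction satisfies $\Delta(\zetam(2))=\zetam(2)\otimes 1$. Your formula then returns $\zetam(2)\cdot\iota(1)=\zetam(2)\neq 0$. Passing to the full $G_{dR}$-coaction does not help: there $\Delta_{\dR,\mm}(\zetam(2))=(\LDR)^2\otimes\zetam(2)$, and $\iota(\LDR)$ must be a unit of $\Ho$, so $\iota(\LDR)^2\neq 0$. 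No translation operator of this shape on $\Ho$ can kill the $(\LM)^2$-ideal. The paper's map is instead the composite $\Ho\xrightarrow{\pim}\Ao\xrightarrow{\svm}\Ho$, where $\pim$ (the coaction $\Delta_{\am,\mm}$ followed by the augmentation on the \emph{right}-hand factor, see $(\ref{PemtoPea})$) sends $\zetam(2)\mapsto\zetaa(2)=0$ \emph{before} the single-valued map is applied. Establishing that the map factors through $\Ao$ is precisely the content of the first clause of the theorem; your appeal to ``the motivic lift of $D(1)=0$'' is circular.

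\textbf{(2) The element you use is not in $\UMT$.} A motivic lift of ``comparison composed with complex conjugation'' has non-trivial $\G_m$-component, since $c(\LM)=-\LM$. The paper introduces the twist $\sigma=\tau(-1)c$ (definition $(\ref{defsigma})$) precisely so that $\sigma(\LM)=\LM$; this is what forces $\svm(\LDR)=1$, i.e., puts $\svm$ inside $U_{dR}(\Pe^\mm)$, so that it descends to a map $\Pe^\am\to\Pemp$. Without the $\tau(-1)$ twist you are in the wrong group, and the identification $\Ho^{\sv}\cong\Ao^{\sv}$ on which everything downstream relies is lost.

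\textbf{(3) The technical crux is not carried out.} You correctly flag that showing even-weight Hoffman--Lyndon generators become decomposable is the obstacle, but then defer it to an unspecified ``combinatorial analysis.'' The paper disposes of it in one stroke: Lemma \ref{lemHsvisAsv} identifies $\Ho^{\sv}\cong\Ao^{\sv}$ as the image of $\sv_\Ao=\id\circ\sigma^{\circ -1}:\Ao\to\Ao$ (convolution in $\A(\Ao)$), and modulo products $\sv_\Ao\equiv\id-\sigma$, so the induced map on the Lie coalgebra $\Lo$ of indecomposables is $2\pi^{\mathrm{odd}}$, twice the projection onto odd weight (Proposition \ref{propmain}). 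Since $\Ao$ is polynomial on $\Lo$, $\Ao^{\sv}$ is polynomial on $\Lo^{\mathrm{odd}}$, which settles both your step (a) and step (b) simultaneously and makes the Hilbert-series comparison unnecessary. Combined with Theorem \ref{thmHoff} this gives the Lyndon description. Your Galois-stability argument, on the other hand, is fine.
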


 In particular, the numbers $\zeta_{\sv} (n_1,\ldots, n_r)$ satisfy the  same  double shuffle and associator relations as usual multiple zeta values,  and many more  relations besides: the space $\Ho^{\sv}$ is much smaller
 than $\Ho$ (\S\ref{sectExamples}).  By way of example:
 \begin{eqnarray}
 \zeta_{\sv}(2n+1)  &=  &2 \,\zeta(2n+1) \quad  \hbox{ for all } n\geq 1 \nonumber \\
 \zeta_{\sv}(5,3)  &=  & 14 \, \zeta(3) \zeta(5) \nonumber \\
  \zeta_{\sv}(3,5,3) & = &  2 \, \zeta(3,5,3) - 2 \, \zeta(3) \zeta(3,5) -10 \,   \zeta(3)^2 \zeta(5)\nonumber 
 \end{eqnarray}
 The reader who is only interested in the single-valued multiple zeta values  and not their motivic versions can turn directly to    \S\ref{sectAclassofMZVs} 
 for an elementary definition (which only uses the Ihara action \S\ref{sectIharaaction}), and \S\ref{sectExamples} for enumerative properties and examples.
 \\
 
\subsection{Motivic periods} Whilst writing this paper, it seemed a good opportunity to clarify certain concepts relating to motivic multiple zeta values. There are two conflicting notions of motivic multiple zeta values in the literature, 
one due to Goncharov \cite{GoMT} (for which the motivic version of $\zeta(2)$ vanishes), via the concept of framed objects in mixed Tate categories, and  another for which the motivic version of $\zeta(2)$ is non-zero   \cite{BMTZ}, later simplified by Deligne \cite{DLetter}. It can be paraphrased as follows: 
 
 \begin{defn} Let $\mathcal{M}$ be a Tannakian category of motives,  with two fiber functors $\omega_{dR}, \omega_{B}$.
 A \emph{motivic period} is an element of the affine  ring of   the torsor of periods 
$$\Pe^{\mm} = \Or(\Isom_{\mathcal{M}}(\omega_{dR}, \omega_B) )\ .$$
 \end{defn}
Given a motive $M \in \mathcal{M}$, and classes $\eta\in \omega_{dR}(M)$, $X\in \omega_B(M)^{\vee}$, the \emph{motivic period}   \cite{DLetter}  associated to this data is the function on 
$\Isom_{\mathcal{M}}(\omega_{dR}, \omega_B) )$ defined by 
$$ [M, \eta, X]^{\mm} := \phi \mapsto \langle \phi(\eta), X\rangle$$
This  definition is nothing other than the standard construction of the ring of  functions on the   Tannaka groupoid, and appears in a similar form in \cite{An}, \S23.5.  However, it is the interpretation and application of this concept  which is  of interest here; in particular the idea that one can  sometimes deduce results about periods from their motivic versions and vice-versa (see e.g.,  \cite{BMTZ}, \S4.1).
The ring of motivic periods is a bitorsor over the Tannaka groups $(G_{\omega_{dR}}, G_{\omega_{B}})$ and thus gives rise to  a Galois theory of motivic periods. 
In this paper, we only consider the special case where  $\mathcal{M} = \MT(\Z)$ is the category of mixed Tate motives over $\Z$: the generalization to other categories of mixed Tate motives \cite{DG} is relatively straightforward if one replaces $\omega_{dR}$ with the canonical fiber functor, and bears in mind that there can be several different Betti realizations.

In a similar vein, one can replace $\omega_{dR}, \omega_B$ with any pair of fiber functors, to obtain various different  notions of motivic period. 
 One can  consider the ring of
\emph{de Rham periods} $\Pe^{\dR}$, where we replace $\mm=(\omega_{dR}, \omega_{B})$ with $\dR=(\omega_{dR},\omega_{dR})$, and  a weaker notion of \emph{unipotent de Rham periods}  $\Pe^{\am}$ which are their restriction to the unipotent radical $U_{\omega_{dR}}$ of the Tannaka group
$G_{\omega_{dR}}$.  The latter are precisely the `framed objects' studied in \cite{BMS}, \cite{BGSV}, \cite{GoMT}.
Although there is no direct period (integration) map for de  Rham motivic periods, we construct a related notion  in \S3 in the case $\mathcal{M} = \MT(\Z)$,  which we call the single-valued motivic period.  It gives a well-defined homomorphism from unipotent de Rham periods to motivic periods 
$$\sv^{\mm}:\Pe^{\am} \To \Pe^{\mm}$$ 
Composing with the period map attaches a complex number to de Rham periods. This gives a transcendental pairing between  a de Rham cohomology class and a de Rham homology class.
 In the case of $\MT(\Z)$, the numbers one obtains are precisely the single-valued multiple zeta values
$(\ref{introSVMZV})$.
Since the  definition  of $\sv^{\mm}$ requires nothing more than   complex conjugation and the weight grading,   it  comes  perhaps  as a surprise that this map is already so intricate in this special case (see, for example  $(\ref{zmsvexamples})$). 
 \\

 Section 2 consists of generalities on motivic and de Rham periods, and section 3 defines the motivic single-valued map $\sv^{\mm}$. 
 The remainder of the paper applies this construction to the case of the motivic fundamental group of $\Pro^1 \backslash \{0,1,\infty\}$. Section 4 consists of reminders, and \S5 aims to give a completely
 elementary definition of $\zeta_{\sv}$. Section 6 defines the motivic versions $\zetam_{\sv}$ and  \S\ref{SVMPrevisited}  reconstructs the single-valued multiple polylogarithms from the point of view of the unipotent fundamental group.
 Section 7 applies the main theorem of \cite{BMTZ} to deduce structural results about $\Ho^{\sv}$. 
 \\
 
 \emph{Acknowledgements}. Many thanks to P. Deligne and O. Schnetz for asking me essentially the same question: what are the coefficients of Deligne's associator, and what are the values at one of the
 single-valued multiple polylogarithms?
 This work was partially supported by ERC grant 257638, and written whilst a visiting scientist at Humboldt University.
 Many thanks also to C. Dupont for discussions, and  especially to O. Schnetz for  extensive numerical computations \cite{SI}.

 \subsubsection{Conventions}
 All tensor products are over $\Q$ unless expressly stated otherwise. 
 
 \section{Generalities on periods and mixed Tate motives} \label{sectreminders}

See \cite{DG}, \S2 for the  background material on mixed Tate motives required in this section.
Much of what follows applies  to any category of mixed Tate motives over a number field, provided that one replaces the de Rham fiber functor with the canonical fiber
functor $\omega_n(M) = \Hom_{\MT} ( \Q(-n), \gr^{W}_{2n} M)$ (see \cite{DG}, \S1.1).

\subsection{Mixed Tate motives over $\Z$} 
Let $\M=\MT(\Z)$ denote the Tannakian category of mixed Tate motives over $\Z$ \cite{DG}. 
Its   canonical fiber functor   is equal to the fiber functor $\omega_{dR}$ given by the de Rham realization, and it is equipped with  a fiber functor $\omega_B$ given by the Betti realization with respect to the 
unique embedding $\Q \hookrightarrow \C$. Let $G_{dR}$ and $G_B$ denote the corresponding Tannaka groups. 
They are affine group  schemes over $\Q$. We shall mainly focus on $G_{dR}$.  

The action of $G_{dR}$ on $\Q(-1)\in \M$ defines a map
$G_{dR} \rightarrow \G_m$
whose kernel is denoted by $\UMT$. It is a pro-unipotent affine group scheme over $\Q$.  Furthermore,
since $\omega_{dR}$ is graded, $G_{dR}$ admits  a 
 decomposition as a semi-direct product (\cite{DG}, \S2.1)
\begin{equation} \label{Gdrdecomp}
 G_{dR} \cong  \UMT \rtimes \mathbb{G}_m \ .
 \end{equation}Ê
A mixed Tate motive  $M\in \M$  can be represented by  a finite-dimensional graded $\Q$-vector space $M_{dR} = \omega_{dR}(M)$ equipped with an action of 
$\UMT$ which is compatible with the grading. We shall write $(M_{dR})_n$ for the component in degree $n$, i.e., 
$(M_{dR})_n = (W_{2n} \cap F^n) M_{dR}$. The  Betti realization  of $M$, denoted $M_B = \omega_B(M)$, is a finite-dimensional  $\Q$-vector space  equipped with an increasing filtration $W_\bullet M_{B}$. 

The two are related by  a canonical comparison isomorphism
\begin{equation} \label{BdR} 
 \mathrm{comp}_{B, dR} : M_{dR} \otimes_{\Q} \C \overset{\sim}{\To} M_{B}\otimes_{\Q} \C
 \end{equation} 
which can be computed by integrating differential forms. We shall often  use the fact that $\Q(0) \in \M$ has rational periods, i.e., that 
\begin{equation} \label{Q0ratperiods}
\mathrm{comp}_{B,dR}: \Q(0)_{dR} \overset{\sim}{\To} \Q(0)_{B}\ .
\end{equation}

In general, given any pair of fiber functors $\omega_1,\omega_2$ on $\M$, let 
$$\PP_{\omega_1,\omega_2} = \Isom( \omega_{1}, \omega_{2})$$
denote the set of isomorphisms of fiber functors from $\omega_{1}$ to $\omega_2$. It is a scheme over $\Q$, 
and is a bitorsor over $(G_{\omega_1}, G_{\omega_2})$, where $G_{\omega_i} = \PP_{\omega_i,\omega_i}$ is the Tannaka group scheme relative to $\omega_i$, for $i=1,2$.
  The comparison map defines a complex point
$$\mathrm{comp}_{B,dR} \in \PP_{\omega_{dR}, \omega_B}(\C) \ .$$

\subsection{Motivic periods}
There are two conflicting  notions of motivic multiple zeta values in the literature, one due to \cite{GoMT} and the other due to \cite{BMTZ}. 
One can reconcile the two definitions  with minimal damage to  existing terminology as follows.

\begin{defn} \label{defnmotperiods} Let $\omega_1, \omega_2$ be two fiber functors on $\M$.  Let $ M \in \mathrm{Ind}\, (\M)$ and let  $\eta \in \omega_1(M)$, and $X \in \omega_2(M)^{\vee}$. A \emph{motivic    period} of  $M$ of type $(\omega_1,\omega_2)$
\begin{equation}\label{motper} 
[M, \eta, X]^{\omega_1,\omega_2} \in \Or(\PP_{\omega_1, \omega_2} )
\end{equation}
is  the   function $\PP_{\omega_1,\omega_2} \rightarrow \Q$ defined by
$  \phi \mapsto  \langle \phi(\eta), X \rangle = \langle \eta, {}^t \phi(X) \rangle .$
\end{defn}

One can clearly extend definition \ref{defnmotperiods} to other Tannakian categories (of motives), but only  $\M=\MT(\Z)$ will be considered in this paper. 

Definition  \ref{defnmotperiods} in the case $(\omega_1, \omega_2) = (\omega_{dR}, \omega_B)$  is due to Deligne \cite{DLetter}, and simplifies the definition in \cite{BMTZ}. Since  this is the case of primary interest for us, we shall call  $(\ref{motper})$
a \emph{motivic period}, and denote the pair $(\omega_{dR}, \omega_B)$ simply by $\mm$.
We shall also consider the case   $(\omega_1, \omega_2) = (\omega_{dR}, \omega_{dR})$. We shall call the corresponding  period $(\ref{motper})$ a \emph{de Rham period}, and denote the pair
$(\omega_{dR}, \omega_{dR})$  by $\dR$.

\begin{defn} Let $M \in \M$ and let $\omega_1,\omega_2$ be a pair of fiber functors as above.
We shall denote the space of all  motivic periods of type $(\omega_1,\omega_2)$ by
\begin{equation}\label{Pedef}
 \Pe^{\omega_1,\omega_2} = \Or(\PP_{\omega_1,\omega_2})\ ,
\end{equation}
 and  we shall write 
$\Pe^{\omega_1,\omega_2}(M) $  for the $\Q$-subspace of  $\Pe^{\omega_1, \omega_2} $  
 spanned by the  motivic periods of $M$ of type $(\omega_1,\omega_2)$.
\end{defn}
It follows from $(\ref{Pedef})$ that the set of all motivic periods forms an algebra over $\Q$.
The set of fiber functors on $\M$ form a groupoid with respect to composition
$$\PP_{\omega_1,\omega_2} \times \PP_{\omega_2,\omega_3} \rightarrow \PP_{\omega_1,\omega_3}$$
for any three fiber functors $\omega_1,\omega_2,\omega_3$.  
Dualizing, we obtain a  coalgebroid structure on  spaces of motivic periods:
\begin{eqnarray} \label{Hopfalgebroid} 
\Pe^{\omega_1,\omega_3}  \To \Pe^{\omega_1,\omega_2}  \otimes \Pe^{\omega_2,\omega_3} \ ,
\end{eqnarray} 
which, in the case $\omega_1 = \omega_2= \omega_{dR}$,  and $\omega_3= \omega_B$ becomes a coaction:
\begin{equation} \label{Pemdrcoaction} 
\Delta_{\dR,\mm}: \Pe^{\mm}  \To  \Pe^{\dR}  \otimes \Pe^{\mm}\ .
\end{equation}
By the definition of the Tannaka group, 
\begin{equation} \label{GdRasSpecP}
G_{dR} = \Spec(\Pe^{\dR})\ ,
\end{equation} 
and so $(\ref{Pemdrcoaction} )$ defines in particular an action of $G_{dR}(\Q)$ on the space of motivic periods $\Pe^{\mm}$.
 Since $\omega_{dR}$ is graded, the (left) action of $\G_m \subset G_{dR}$ corresponds to a grading on $\Pe^{\omega_{dR}, \omega}$ for any  fiber functor $\omega$.  
We shall sometimes call the degree the weight, in keeping with standard terminology for multiple zeta values. It is one half of  the Hodge-theoretic weight, i.e., $\Q(-n)$ has weight $n$.

The notions of motivic and de Rham periods are fundamentally different.  In the case of motivic periods, pairing with the element $(\ref{BdR})$ defines the period homomorphism 
\begin{equation}\label{perhom} 
\per : \Pe^{\mm} \To \C\ .
\end{equation} 
 The point $1 \in G_{dR}$ defines a map $\Pe^{\dR} \rightarrow \Q$. The period map \emph{per se}  is not available for de Rham periods, although  we shall define a substitute in  \S\ref{sectSVMP}.

\subsection{Formulae}   By the main construction of Tannaka theory  (\cite{DeTannaka}, \S4.7),
motivic periods  are spanned by   symbols $[M, \eta, X]^{\omega_1, \omega_2}$, where $M \in \M$, $\eta  \in \omega_1(M)$, and $X\in \omega_2(M)^{\vee}$, modulo  the equivalence relation generated by 
\begin{equation}\label{Framedequiv}
[M_1, \eta_1, X_1]^{\omega_1,\omega_2}  \sim [M_2, \eta_2, X_2]^{\omega_1,\omega_2} 
\end{equation}
for every morphism $\rho: M_1 \rightarrow M_2$ such that $\eta_2 = \omega_1(\rho) \eta_1$, and 
$X_1 = \omega_2(\rho)^{t} X_2$.

The  multiplication on motivic periods is given concretely by the formula:
\begin{eqnarray} \label{permult} 
 \Pe^{\omega_1,\omega_2}(M_1) \times  \Pe^{\omega_1,\omega_2}(M_2)  & \To &   \Pe^{\omega_1,\omega_2}(M_1\otimes M_2) \\
{[}M_1,\eta_1,X_1] ^{\omega_1, \omega_2}Ê\times [M_2,\eta_2,X_2]^{\omega_1, \omega_2}   &= & [M_1\otimes M_2,\eta_1\otimes \eta_2,X_1\otimes X_2]^{\omega_1, \omega_2} \ . \nonumber 
 \end{eqnarray}
In particular, if   $M$ is an algebra object  in $\mathrm{Ind}\, (\M)$, then $\Pe^{\omega_1,\omega_2}(M)$ is a commutative ring, and its spectrum is an affine scheme over $\Q$.

Given three fiber functors $\omega_1, \omega_2, \omega_3$, the Hopf algebroid structure $(\ref{Hopfalgebroid})$ can be computed explicitly by the usual coproduct formula for endomorphisms 
\begin{eqnarray} \label{mmcoaction}
\Delta_{\omega_1,\omega_2; \omega_2,\omega_3}: \Pe^{\omega_1,\omega_3}(M)  &\To  &\Pe^{\omega_1,\omega_2}(M) \otimes \Pe^{\omega_2,\omega_3}(M) \\
{[}M,\eta, X]^{\omega_1, \omega_3} & \mapsto &  \sum_{v} [M, \eta, v^{\vee}]^{\omega_1,\omega_2} \otimes  [M,  v,X]^{\omega_2,\omega_3} \nonumber
\end{eqnarray}
where $\{v\}$ is  a basis of $ \omega_2(M)$ and $\{v^{\vee}\}$ is the dual basis. The previous formula  does not depend on the choice of basis.  In the case $\omega_1 = \omega_2 = \omega_{dR}$,  equation
$(\ref{mmcoaction})$   gives the following formula for the weight of a  motivic period:
\begin{equation} \label{degreeofM}
 \deg\, [M, \eta, X ]^{\omega_{dR}, \omega} = m\ ,
 \end{equation}
whenever  $\eta\in  (M_{dR})_m$ has degree $m$, and $\omega_3=\omega$ is any fiber functor.

Finally, given a motivic period $[M, \eta, X]^{\mm} \in \Pe^{\mm}$,  its period is given by 
\begin{equation}
\per([M, \eta, X]^{\mm})= \langle \mathrm{comp}_{B,dR}(\eta), X \rangle\in \C\ .
\end{equation}ÊIn principle  it can always be computed by integrating a differential form representing $\eta$ along
a topological cycle representing $X$.

\subsection{Unipotent de Rham periods} There is yet another  notion of de Rham period which is obtained by restricting to the unipotent radical $U_{dR} \subset G_{dR}$.

\begin{defn} Let $v\in M_{dR}$, and $f \in M_{dR}^{\vee}$. A \emph{unipotent de Rham period} is the image of 
$[M,v,f]^{\dR}$ under the map $\Or(G_{dR}) \rightarrow \Or(U_{dR})$.  Denote it by
$$[M, v, f]^{\am} \in \Or(U_{dR})$$
and  denote the ring of unipotent de Rham periods by 
\begin{equation} \label{Pea}
\Pe^{\am}\cong \Or(U_{dR})\ .
\end{equation}
\end{defn}

Unipotent de Rham periods are  equivalent  to the notion of framed objects in mixed Tate categories considered, for example, in (\cite{GoMT}, \S2). 
There is a natural map 
$$\pidram : \Pe^{\dR} \To \Pe^{\am}\ ,$$
and hence, by taking     $\omega_1=\omega_2 = \omega_{dR}$ and $\omega_3= \omega_B$  and  restricting the left-hand factor of the right-hand side of $(\ref{mmcoaction})$  to $\Pe^{\am}$, we obtain  a  coaction 
\begin{eqnarray} \label{Deltacoaction}
\Delta_{\mm,\am}: \Pe^{\mm}  & \To & \Pe^{\am} \otimes \Pe^{\mm} 
\end{eqnarray}
The action of $\G_m$ by conjugation gives  $\Or(U_{dR})$ a grading. A (non-zero) unipotent de Rham period $Ê[ M, v_m, f_n]^{\am}$
is homogeneous of degree
\begin{equation}\label{degmminusn}
 \deg {Ê[ M, v_m, f_n]^{\am} =m-n }
\end{equation}
whenever $v_m \in (M_{dR})_{m}$, and $f_n \in ((M_{dR})_n)^{\vee}$. 
 Note that the formula only agrees with  $(\ref{degreeofM})$ when $n=0$. 
Since  $\Or(U_{dR})$ has  weights $\geq 0$,  $Ê[ M, v_m, f_n]^{\am} $ vanishes if $m<n$.
With these definitions,  the coaction  $(\ref{Deltacoaction})$
is  homogeneous in the weight.

\begin{rem}  
In \cite{GoMT}, it is assumed that  one   framing, namely $f_n$,   is in degree  zero.  This defines a smaller space of de Rham periods for a given motive $M$, and the corresponding coproduct 
formula  requires an extra Tate twist  in the left-hand factor. 
\end{rem}

\subsection{Motives generated by motivic periods} It is very    useful to think of a space of motivic periods  $ \Pe^{\mm}(M)$ 
as a motive in its own right.

\begin{defn}
Let $\xi \in \Pe^{\mm}$ be  a motivic period. Let $M({\xi})_{dR}$ denote the graded $\Or(U_{dR})$-comodule it generates  via the coaction  $(\ref{Deltacoaction})$.

By the Tannakian formalism,  this is the de Rham realization of a  motive  we denote by $M({\xi})\in \M$. Define  \emph{the motive generated by the motivic period $\xi$} to be $M(\xi)$.
\end{defn}

\begin{lem} \label{lemxiperiodofMxi} For any $\xi \in \Pe^{\mm}$, $\xi$ is a motivic period of $M(\xi)$.
\end{lem}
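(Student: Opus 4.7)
The plan is to start from a presentation $\xi=[N,\eta,X]^{\mm}$ and cut $N$ down to a subquotient isomorphic to $M(\xi)$, then show that $X$ descends to a Betti functional on $M(\xi)_B$.

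Since $\Pe^{\mm}$ is spanned by motivic-period symbols and a finite sum of such can be consolidated into a single one by direct sum, we may write $\xi=[N,\eta,X]^{\mm}$ with $N\in\M$, $\eta\in N_{dR}$, $X\in N_B^{\vee}$. Consider the $\Q$-linear map
\[\psi_X:N_{dR}\To\Pe^{\mm},\qquad v\longmapsto [N,v,X]^{\mm}.\]
It preserves degree by $(\ref{degreeofM})$ and is $\Or(U_{dR})$-colinear by $(\ref{mmcoaction})$; using the decomposition $G_{dR}=U_{dR}\rtimes\G_m$, this makes $\psi_X$ a morphism of $G_{dR}$-representations. The image $\psi_X(N_{dR})$ is therefore a graded sub-$\Or(U_{dR})$-comodule of $\Pe^{\mm}$ containing $\xi=\psi_X(\eta)$, so by minimality $M(\xi)_{dR}\subset\psi_X(N_{dR})$. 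Let $K_0=\ker\psi_X$ and $K_1=\psi_X^{-1}(M(\xi)_{dR})$; both are graded sub-comodules of $N_{dR}$ and hence, by Tannakian duality, the de Rham realisations of submotives $N_0\subset N_1\subset N$, and $\psi_X$ induces an isomorphism of motives $N_1/N_0\cong M(\xi)$.

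The step I expect to be the main obstacle is the descent of the Betti framing: I must show that $X\in N_B^{\vee}$ vanishes on $(N_0)_B\subset N_B$, so that the restriction $X_1:=X|_{(N_1)_B}$ factors through $(N_1/N_0)_B=M(\xi)_B$. This is the only point where the argument crosses from de Rham to Betti data. For $v\in K_0=(N_0)_{dR}$, the vanishing $[N,v,X]^{\mm}=0$ in $\Pe^{\mm}$ says that the function $\phi\mapsto\langle\phi(v),X\rangle$ vanishes identically on $\PP_{dR,B}$; specialising at $\phi=\mathrm{comp}_{B,dR}$ yields $\langle\mathrm{comp}_{B,dR}(v),X\rangle=0$. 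Since the comparison isomorphism $(\ref{BdR})$ restricts to submotives, it identifies $(N_0)_{dR}\otimes\C$ with $(N_0)_B\otimes\C$, so $X$ annihilates $(N_0)_B\otimes\C$; being $\Q$-linear, it annihilates $(N_0)_B$.

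Hence $X_1$ descends to $\widetilde X\in M(\xi)_B^{\vee}$. Since $\eta\in K_1=(N_1)_{dR}$ (because $\psi_X(\eta)=\xi\in M(\xi)_{dR}$) with image $\bar\eta\in M(\xi)_{dR}$, the equivalence relation $(\ref{Framedequiv})$ applied first to the inclusion $N_1\hookrightarrow N$ and then to the quotient $N_1\twoheadrightarrow M(\xi)$ gives
\[\xi\;=\;[N,\eta,X]^{\mm}\;=\;[N_1,\eta,X_1]^{\mm}\;=\;[M(\xi),\bar\eta,\widetilde X]^{\mm},\]
exhibiting $\xi$ as a motivic period of $M(\xi)$.
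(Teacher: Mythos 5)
Your proof is correct and follows essentially the same strategy as the paper: you cut $N$ down to a subquotient whose de Rham realisation is $M(\xi)_{dR}$, using the $G_{dR}$-equivariant map $v\mapsto[N,v,X]^{\mm}$, whose kernel is precisely the paper's $(\PP_{dR,B}X)^\perp$. The one genuine difference is that you explicitly establish the Betti descent — that $X$ annihilates $(N_0)_B$ — whereas the paper silently writes $X^1$ as a functional on $M^2_B$ without justifying why it factors through the quotient; your specialisation at $\mathrm{comp}_{B,dR}$ over $\C$ does the job, and one could alternatively invoke proposition \ref{propexistsisom} to pick a rational point $\phi_0\in\PP_{dR,B}(\Q)$ and argue directly over $\Q$.
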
 
\begin{proof}  If we represent $\xi$ by a triple $[M , \eta, X]^{\mm}$,  then the de Rham orbit $G_{dR}\eta$ defines a submotive 
$M^1 \subset M$ such that $M^1_{dR} = G_{dR} \eta$. We have an equivalence
$$ [M^1, \eta, X^1]^{\mm} \overset{\sim}{\To} [M, \eta, X]^{\mm}= \xi   $$
where $X^1$ is the image of $X$ in $(M^1_B)^{\vee}$. Now define $M^2$ to be the quotient motive of $M^1$
whose de Rham realization is   $M^1_{dR} / (\PP_{dR,B} X^1)^{\perp}$. Then
$$[M^1, \eta, X^1]^{\mm} \overset{\sim}{\To} [M^2, \eta_2, X^1]^{\mm} $$
are equivalent, 
where $\eta_2$ is the image of $\eta$ in $M^2_{dR}$. 
In particular, $\xi$ is a motivic period of $M^2$.  The de Rham realization of $M_2$ is exactly
$${ G_{dR} \eta \over  G_{dR} \eta \cap (\PP_{dR, B} X)^{\perp}}$$ 
which  is isomorphic to  the $G_{dR}$-module generated by the function
$[M,  \eta, X]^{\mm}\in \Pe^{\mm}$. Therefore  $M^2_{dR} = M(\xi)_{dR}$ and hence $M^2= M(\xi) $.   
\end{proof}

Thus $M(\xi)$ is the smallest subquotient motive  $M'$ of $M$  such that 
$\xi \in \Pe^{m}(M')$.

\subsection{Geometric periods} \label{sectgeomcase} The notions of de Rham and motivic periods can be  related to each other via the  following algebra of geometric periods.

\begin{defn}\label{defngeomper} Let $\Pemp \subset \Pe^{\mm}$ be the largest graded subalgebra of $\Pe^{\mm}$ such that:

i). $\Pemp$ has weights $\geq0$,

ii). $\Pemp$  is a comodule under  $\Pe^{\am}$, i.e., 
\begin{equation} \label{Pempcoaction}
\Delta_{\am,\mm}: \Pemp \To \Pe^{\am} \otimes \Pemp\ .
 \end{equation}
\end{defn}
Suppose that $M\in \M$ has positive weights, i.e., $W_{-1} M =0$.   
 Then
$$\Pe(M) \subset \Pemp\ . $$

\begin{lem} \label{lempemgen} The algebra $\Pemp$ is generated by  the motivic periods of $M$, where $M$ has non-negative weights ($W_{-1}M=0$).
\end{lem}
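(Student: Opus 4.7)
The plan is to identify $\Pemp$ with the subalgebra $\mathcal{A}\subset\Pe^{\mm}$ generated by motivic periods $[M,\eta,X]^{\mm}$ with $W_{-1}M=0$, by proving both inclusions.

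For $\mathcal{A}\subset\Pemp$, I would verify that $\mathcal{A}$ itself satisfies the defining properties (i) and (ii) of Definition \ref{defngeomper}; maximality of $\Pemp$ then gives the inclusion. If $W_{-1}M=0$, then $M_{dR}$ is concentrated in non-negative degrees, so every generator $[M,\eta,X]^{\mm}$ has weight $\geq 0$ by $(\ref{degreeofM})$, and since weights add under multiplication, $\mathcal{A}$ is a graded subalgebra with weights $\geq 0$. For (ii), specialize $(\ref{mmcoaction})$ to $\omega_1=\omega_2=\omega_{dR}$ and restrict the left-hand factor to $\Pe^{\am}$:
$$\Delta_{\am,\mm}\bigl([M,\eta,X]^{\mm}\bigr) \;=\; \sum_{v}\,[M,\eta,v^{\vee}]^{\am}\otimes[M,v,X]^{\mm},$$
with $\{v\}$ a homogeneous basis of $M_{dR}$. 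Each right-hand factor $[M,v,X]^{\mm}$ is a motivic period of the \emph{same} motive $M$ (only the de Rham framing has changed), and so remains in $\mathcal{A}$. Thus $\Delta_{\am,\mm}(\mathcal{A})\subset\Pe^{\am}\otimes\mathcal{A}$, which is (ii).

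For $\Pemp\subset\mathcal{A}$, take $\xi\in\Pemp$. By Lemma \ref{lemxiperiodofMxi}, $\xi$ is a motivic period of the motive $M(\xi)$, whose de Rham realization is by construction the graded $\Or(U_{dR})$-subcomodule of $\Pe^{\mm}$ generated by $\xi$ under $\Delta_{\am,\mm}$. Since $\Pemp$ contains $\xi$ and is itself stable under the coaction, we have $M(\xi)_{dR}\subset\Pemp$, and so $M(\xi)_{dR}$ inherits the property of being concentrated in weights $\geq 0$, i.e.\ $W_{-1}M(\xi)=0$. Hence $\xi\in\mathcal{A}$.

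The only substantive step is the verification of (ii): the key observation is that in $(\ref{mmcoaction})$ the underlying motive $M$ is retained on both sides of the tensor product, so the class of motives satisfying $W_{-1}=0$ is automatically preserved by the coaction. Once this is in hand, the reverse inclusion reduces almost mechanically to Lemma \ref{lemxiperiodofMxi} combined with the weight condition (i) defining $\Pemp$.
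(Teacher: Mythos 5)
Your proof is correct and follows essentially the same route as the paper: the substantive direction $\Pemp \subset \mathcal{A}$ uses Lemma \ref{lemxiperiodofMxi} together with the observation that $M(\xi)_{dR} \subset \Pemp$ forces $W_{-1}M(\xi)=0$, exactly as the paper does (with $M(\xi)$ in place of the Ind-motive $\mathbb{P}$ the paper assembles from all of $\Pemp$). Your explicit verification of the reverse inclusion $\mathcal{A} \subset \Pemp$ via the coaction formula and maximality fills in a step the paper asserts in the sentence just before the lemma without proof.
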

\begin{proof}  The graded vector space $\Pemp$ is an $\Or(U_{dR})$-comodule by $(\ref{Pempcoaction})$. It is therefore the de Rham realization of an object $\mathbb{P} \in \mathrm{Ind}(\M)$
which has weights $\geq 0$ by $\ref{defngeomper}$, $i).$
By  lemma \ref{lemxiperiodofMxi}, every $\xi \in \Pemp$ is a motivic period of $\mathbb{P}$.  
\end{proof} 

It follows from   lemma  $\ref{lempemgen}$ that 
\begin{equation} \label{Pemp0isQ}
\Pemp_0 \cong \Q\ .
\end{equation}
This is because a motivic period of weight zero of a motive $M$ satisfying $W_{-1}M =0$, is equivalent to 
a period of $\Q(0)$, which is rational. Note that the isomorphism $(\ref{Pemp0isQ})$ uses  $\mathrm{comp}_{B,dR}$ via $(\ref{Q0ratperiods})$.
 As a consequence,  there is an augmentation map
$$ \varepsilon : \Pemp\To  \Q$$
given by projection onto $\Pemp_0$. This defines a map
\begin{equation} \label{PemtoPea}
\pim: \Pemp \To \Pe^{\am} 
\end{equation} 
 by composing the coaction $\Delta_{\am,\mm}: \Pemp \To \Pe^{\am} \otimes \Pemp$ with $\varepsilon$.  The map $\pim$  respects the weight gradings, and  is an isomorphism in weight zero  $\pim: \Pemp_0 \cong  \Pe^{\am}_0.$
\\

The map $\pim$ can be computed another way. 
Let $M$ satisfy $W_{-1}M =0$. 
Then $W_0 M$ is a direct sum of copies of  $\Q(0)$, which has rational periods $(\ref{Q0ratperiods})$. We have
$$ \gr^{W}_0 M_{dR} = W_0 M_{dR}\quad  \overset{ \mathrm{comp}_{B,dR} } {\To} \quad  W_0 M_B \hookrightarrow M_B \ . $$ 
Since $M_{dR}$ is graded, we can  first apply the projection  $M_{dR} \rightarrow \gr^{W}_0 M_{dR}$ and then apply the previous  map. This defines a rational comparison morphism
 \begin{equation}\label{c0def}   c_{0}: M_{dR} \To M_{B} \ .\end{equation}
 Then $(\ref{PemtoPea})$ is given by the formula
 \begin{eqnarray} \label{PemtoPeaexplicit}
\pim: \Pe^{\mm}(M)  &\To  &  \Pe^{\am}(M)  \\
{[}M, \eta, X]^{\mm}  &\mapsto &  [M, \eta, {}^t c_{0}(X)]^{\am} \nonumber
\end{eqnarray}
for all $ \eta \in M_{dR},  X \in M_B^{\vee}.$  It only depends on the restriction of $X$  to $W_0 M_B$.

\subsection{Example: the \Lefschetz motive.} \label{sectLefschetz}
Let $M= H^1(\G_m) \cong \Q(-1) $. Then $M_{dR}= H_{dR}^1(\G_m; \Q) \cong  \Q\, \omega_0$,  and $M_B^{\vee} =H_1(\C^{\times}; \Q) = \Q \gamma_0 $, 
where  $\omega_0= {dz \over z}$, and $\gamma_0$ is a loop winding around $0$ in the positive direction. Denote the \Lefschetz motivic period by 
\begin{equation} \label{LLdef}
  \LM=  [M, [\omega_0], [\gamma_0] ]^{\mm} 
  \end{equation}
whose weight is one and whose  period is $$\per(\LM)=\int_{\gamma_0} \omega_0 = 2i \pi. $$  The element $\LM$ is invertible in $\Pe^{\mm}$. We use the notation $\LM$ in order to avoid the rather ugly alternative $(2 i \pi)^{\mm}$.
Define the \Lefschetz de Rham period by 
\begin{equation} \label{LLdefdR}
  \LDR=  [M, [\omega_0], [\omega_0]^{\vee} ]^{\dR} \ .
  \end{equation}
It is  group-like for  the coproduct on $\Or(G_{dR})$: $\Delta_{\dR,\dR} \LDR =\LDR \otimes \LDR$.  Since
$U_{dR}$ acts trivially on $\Q(-1)$, the unipotent de Rham  \Lefschetz  period is trivial:
\begin{equation} \label{LLdefAM}
  \LA= \pidram (\LDR)=1 \ .
  \end{equation}
By definition,   $\LDR$ can be viewed as a coordinate on $\G_m$, and 
\begin{equation} \label{Gmasspec}
\G_m \cong \Spec \Q [ (\LDR)^{-1}, \LDR]\ .
\end{equation}
 On the other hand,  $\gr^W_0 M =0$, so  $c_0([\gamma_0])=0$ and therefore
\begin{equation} \label{pimlmzero} 
 \pim(\LM)=  [M, [\omega_0], c_0([\gamma_0]) ]^{\am} =0\ .
 \end{equation} 
By $(\ref{mmcoaction})$, the coaction $\Delta_{\dR, \mm}: \Pe^{\mm} \rightarrow \Pe^{\dR} \otimes \Pe^{\mm}$
acts on the motivic \Lefschetz period by $\Delta_{\dR, \mm}\LM= \LDR \otimes \LM$. By $(\ref{LLdefAM})$ the coaction $\Delta_{\am,\mm}:\Pe^{\mm} \rightarrow \Pe^{\am} \otimes \Pe^{\mm}$ satisfies
\begin{equation} \label{LMcoaction}
\Delta_{\am,\mm}(\LM) = 1 \otimes \LM\ .
\end{equation}

\subsection{Structure of de Rham periods}
The fact that $G_{dR}$ is  a semi-direct product  $(\ref{Gdrdecomp})$  implies that $G_{dR} \cong U_{dR} \times \G_m$ as  schemes, and hence
\begin{equation} \label{PdRdecomp}
\Pe^{\dR} \cong \Pe^{\am} \otimes \Q[  (\LDR)^{-1} , \LDR]\ .
\end{equation}
The coaction of $\Pe^{\am}$ on the right-hand side is given by the formula $\Delta_{\am, \dR} (\LDR) = 1 \otimes \LDR$, by $(\ref{LLdefAM})$.
 Equivalently, the map $\G_m \rightarrow G_{dR}$ induces a projection
 \begin{equation} \label{pilldr}
\pi_{\LL,\dR}: \Pe^{\dR} \To \Q[  (\LDR)^{-1} , \LDR]\ ,
\end{equation}
 or explicitly  
$ \pi_{\LL,\dR} \big( [M, v, f]^{\dR} \big)  =   f(v) \, (\LDR)^{n}$ if $ v\in  (M_{dR})_n$.
 The isomorphism  $(\ref{PdRdecomp})$ is then induced by  
composing the coaction $\Delta_{\am,\dR}: \Pe^{\dR} \rightarrow \Pe^{\am} \otimes \Pe^{\dR}$ with $\id \otimes \pi_{\LL,\dR}$.

\begin{rem} \label{remPa0} If $v  \in   (M_{dR})_m$ and $f \in (( M_{dR})_n)^{\vee}$ are of degrees $m>n$ respectively,   then the image of $[M, v, f]^{\am}$ under the implied 
section $\Pe^{\am} \rightarrow \Pe^{\dR}$ is $[M(n), v, f]$, where $v$ now sits in degree $m-n$, and $f$ in degree zero. 
The literature on framed mixed Tate objects  essentially  identifies $\Pe^{\am}$ with its image $\Pe^{\am,0}$  in $\Pe^{\dR}$. 
\end{rem}

\subsection{Structure of motivic periods}
Rather than using the canonical isomorphism of fiber functors $\mathrm{comp}_{dR,B}$, which is defined over $\C$, we prefer
to choose  rational isomorphisms, which are non-canonical.
\begin{prop} \label{propexistsisom} There exists an isomorphism of fiber functors  from $\omega_B$ to $\omega_{dR}$.
\end{prop}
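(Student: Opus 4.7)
The plan is to show that $\PP_{\omega_B,\omega_{dR}}(\Q)$ is nonempty, equivalently that the right $G_{dR}$-torsor $\PP_{\omega_B,\omega_{dR}}$ over $\Q$ is trivial. I would exploit the semidirect product decomposition $G_{dR} \cong \UMT \rtimes \G_m$ of $(\ref{Gdrdecomp})$ and reduce the task to two standard vanishing statements: Hilbert's theorem 90 for $\G_m$ and the triviality of pro-unipotent torsors in characteristic zero.

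First, form the quotient $Q := \PP_{\omega_B,\omega_{dR}}/\UMT$, which is a $\G_m$-torsor over $\Q$. Under the Tannakian dictionary, $Q$ parametrizes isomorphisms between the associated graded fiber functors $\gr^W \omega_B$ and $\omega_{dR}$ on the semisimple subcategory of pure Tate motives, which is generated by the $\Q(-n)$. Since $\omega_B(\Q(-n))$ and $\omega_{dR}(\Q(-n))$ are each canonically a rank-one $\Q$-vector space, there is a tautological $\Q$-point $q \in Q(\Q)$ given by the identity on each graded piece; alternatively one may invoke directly that $H^1(\Spec \Q, \G_m) = 0$.

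Second, lift $q$ to a $\Q$-point of $\PP_{\omega_B,\omega_{dR}}$. The fiber of the projection $\PP_{\omega_B,\omega_{dR}} \to Q$ over $q$ is a $\UMT$-torsor over $\Q$. Since $\UMT$ is pro-unipotent, it is an inverse limit of finite-dimensional unipotent $\Q$-groups, each admitting a central filtration with successive subquotients isomorphic to $\G_a$. Because $H^1(\Spec \Q, \G_a) = 0$ in characteristic zero, an inductive dévissage yields $H^1(\Spec \Q, \UMT) = 0$, so the fiber admits a $\Q$-point. Composing with $q$ gives the desired $\Q$-point of $\PP_{\omega_B,\omega_{dR}}$, i.e.\ an isomorphism of fiber functors.

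The main technical point to verify is the pro-unipotent dévissage: one must check that the cohomology vanishing passes to the inverse limit, which is routine since the transition maps between successive unipotent quotients are surjective and the relevant terms are pro-algebraic over a field of characteristic zero. No substantial obstacle arises beyond this standard formalism.
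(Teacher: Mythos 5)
Your proof is correct and gives the standard argument. The paper does not spell it out but refers to Proposition 8.10 of Deligne's \emph{Le groupe fondamental de la droite projective moins trois points}, which establishes the result in essentially the same way: dévissage of the $G_{dR}$-torsor $\Isom(\omega_B,\omega_{dR})$ through the extension $1\to\UMT\to G_{dR}\to\G_m\to 1$, with the quotient $\G_m$-torsor trivial by Hilbert 90 and the residual $\UMT$-torsor trivial because $H^1(\Q,\G_a)=0$ propagates to pro-unipotent groups over a field of characteristic zero. Your two steps, and the remark that the cohomology vanishing passes to the inverse limit, are all sound; normality of $\UMT$ (needed to form the quotient torsor) holds because $\UMT$ is the kernel of $G_{dR}\to\G_m$. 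One minor wording caveat: you describe a ``tautological'' $\Q$-point of the quotient $\G_m$-torsor ``given by the identity on each graded piece,'' but $\omega_B(\Q(-n))$ and $\omega_{dR}(\Q(-n))$ are distinct one-dimensional $\Q$-vector spaces with no canonical identification (the transcendental comparison between them is multiplication by $(2\pi i)^n$), so one is making a rational choice rather than using an identity map; your alternative appeal to Hilbert 90 is the cleaner justification, and in either case the argument is unaffected.
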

\begin{proof} See the proof of proposition 8.10 in \cite{DeP1}. 
\end{proof}
By choosing such an element $s\in \mathrm{Isom}(\omega_{B}, \omega_{dR})$, we obtain an isomorphism
\begin{equation} \label{sisomonbeta}
 \mathrm{Isom}(\omega_{dR}, \omega_{B}) \overset{\sim}{\To} \mathrm{Isom}(\omega_{dR}, \omega_{dR}) \ .
 \end{equation} 
Dually, this gives  $s^t:\Pe^{\dR} \overset{\sim}{\To} \Pe^{\mm}$, and so $(\ref{PdRdecomp})$ gives a non-canonical isomorphism
$$s^t:   \Pe^{\am} \otimes \Q[ (\LDR)^{-1}, \LDR]   \overset{\sim}{\To} \Pe^{\mm}\ .$$
By \S\ref{sectLefschetz} we can  assume that $s^t( \LDR) = \LM$, and write the previous isomorphism as
\begin{equation} \label{Pemdecomp}
\qquad \qquad  \qquad \quad \quad \Pe^{\mm} \cong   \Pe^{\am} \otimes \Q[ (\LM)^{-1}, \LM]   \qquad \qquad (\hbox{depending on } s)\ .
\end{equation}
It is compatible with the  coaction $\Delta_{\am,\mm}: \Pe^{\mm} \rightarrow \Pe^{\am} \otimes \Pe^{\mm}$, and the weight gradings.

\begin{cor}  There is a non-canonical decomposition
\begin{equation} \label{PempDecomp}
  \Pemp \cong \Pe^{\am} \otimes \Q[  \LM] \ .
 \end{equation} 
\end{cor}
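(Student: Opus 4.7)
The plan is to identify $\Pemp$ with the subalgebra $A := \Pe^{\am} \otimes \Q[\LM]$ sitting inside $\Pe^{\mm} \cong \Pe^{\am} \otimes \Q[(\LM)^{-1}, \LM]$ via the isomorphism $(\ref{Pemdecomp})$. Under the chosen splitting, the coaction $\Delta_{\am,\mm}$ is identified with $\Delta_{\Pe^{\am}} \otimes \mathrm{id}$: this combines $\Delta_{\am,\mm}(\LM) = 1 \otimes \LM$ from $(\ref{LMcoaction})$ with the fact that $\Delta_{\am,\mm}$ restricts to the coproduct on $\Pe^{\am}$, and it is the content of the compatibility asserted just after $(\ref{Pemdecomp})$.

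For the inclusion $A \subset \Pemp$, I would verify that $A$ satisfies both conditions of Definition $\ref{defngeomper}$: it is a graded subalgebra with weights $\geq 0$ because $\Pe^{\am}$ has weights $\geq 0$ by $(\ref{degmminusn})$ and $\LM$ has weight $1$; and under the identification above, $(\Delta_{\Pe^{\am}} \otimes \mathrm{id})(A) \subset \Pe^{\am} \otimes A$, so $A$ is closed under the coaction. Maximality of $\Pemp$ then yields $A \subset \Pemp$.

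For the reverse inclusion I would argue by contradiction. Suppose some homogeneous $\xi \in \Pemp$ of weight $k \geq 0$ has a nonzero term in strictly negative $\LM$-powers, and write $\xi = a_{-N} \LM^{-N} + \sum_{n > -N} a_n \LM^n$ with $N \geq 1$ and $a_{-N}$ homogeneous of weight $k+N$. Extend $\{1, a_{-N}\}$ to a homogeneous basis of $\Pe^{\am}$ and examine the coefficient of $a_{-N}$ in the left tensor factor of $\Delta_{\am,\mm}(\xi) = \sum_n \Delta(a_n) \otimes \LM^n$. For $n > -N$, the complementary weight $-n - N$ is negative, so such a term in $\Delta(a_n)$ vanishes because $\Pe^{\am}$ has weights $\geq 0$. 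For $n = -N$, weight-matching forces the complementary factor to lie in $(\Pe^{\am})_0 = \Q$, and the counit identity $(\mathrm{id}\otimes \varepsilon)\Delta(a_{-N}) = a_{-N}$ fixes its value to be $1$. Consequently the $a_{-N}$-component of $\Delta_{\am,\mm}(\xi) \in \Pe^{\am} \otimes \Pemp$ is exactly $\LM^{-N} \in \Pemp$, contradicting condition $i)$ of Definition $\ref{defngeomper}$.

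The main obstacle is precisely this converse inclusion: the condition of having weights $\geq 0$ is strictly weaker than lying in $A$, since for instance $a \LM^{-1}$ with $a \in \Pe^{\am}$ of weight $\geq 1$ already has weight $\geq 0$. Thus the comodule hypothesis in Definition $\ref{defngeomper}$ is indispensable, and the argument sketched above uses the coaction precisely to propagate the weight-negativity of $\LM^{-N}$ from a structural component of $\Delta_{\am,\mm}(\xi)$ back into $\Pemp$ itself.
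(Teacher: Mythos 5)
Your argument is correct and follows the same overall strategy as the paper: establish both inclusions between $\Pemp$ and $\Pe^{\am}\otimes\Q[\LM]$ under the trivialization $(\ref{Pemdecomp})$, using maximality of $\Pemp$ for one direction. Where you diverge is in the proof that $\Pemp\subset\Pe^{\am}\otimes\Q[\LM]$: the paper simply asserts that ``since $\Pemp$ has weights $\geq 0$, and $\LM$ has weight $1$, the restriction of $(\ref{Pemdecomp})$ to $\Pemp$ gives an injective map $\Pemp\to\Pe^{\am}\otimes\Q[\LM]$,'' relying implicitly on the fact that the map $(\ref{Pemdecomp})$ is $(\id\otimes\pi_{\LM})\circ\Delta_{\am,\mm}$, so that after the coaction one lands in $\Pe^{\am}\otimes\Pemp$ (by condition (ii) of Definition~\ref{defngeomper}) and $\pi_{\LM}$ sends the right factor, which has weights $\geq 0$, into $\Q[\LM]$. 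You correctly observe that the bare weight bound on $\Pemp$ is insufficient (since $a\LM^{-1}$ with $\deg a\geq 1$ already has nonnegative weight), isolate the role of the comodule axiom, and verify it by hand via the coefficient-extraction-and-counit argument. This is a sound and self-contained expansion of the step the paper treats in one line; the two routes prove the same thing, with yours trading the implicit use of $\pi_{\LM}$ for an explicit contradiction that makes visible exactly which hypothesis rules out negative powers of $\LM$.
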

\begin{proof}
The decomposition $(\ref{Pemdecomp})$ is  induced by $(\id \otimes \pi_{\LM}) \circ \Delta_{\am,\mm}$,
 where  $\pi_{\LM}$ is given by $(s^t)^{-1}$ followed by $(\ref{pilldr})$, and $\LDR\mapsto \LM$.  Since $\Pemp$ has weights $\geq 0$, and 
$\LM$ has weight $1$, the restriction  of   $(\ref{Pemdecomp})$ to $\Pemp$ gives an injective map 
$$  \Pemp \To \Pe^{\am} \otimes \Q[\LM]\ .$$
The image of $\Pe^{\am} \otimes \Q[\LM]$ in $\Pe^{\mm}$ has weights $\geq 0$, and is $G_{dR}$-stable.  Since $\Pemp$ is the largest subalgebra of $\Pe^{\mm}$
with this property, the previous map is an isomorphism.
\end{proof}
Sending  $\LM$ to zero in  $(\ref{PempDecomp})$ gives back the map $\pim:  \Pemp \rightarrow \Pe^{\am}$.

\subsection{Real Frobenius} \label{sectRealstruct} Since there is a unique embedding from $\Q$ to $\C$,
complex conjugation defines the real Frobenius
$ c : M_B \rightarrow M_B$. 
 It induces an involution 
 \begin{eqnarray}
c: \Pe^{\mm} & \To & \Pe^{\mm} \\
{[}M, \eta, X]^{\mm}  &\mapsto &  [M, \eta, c(X)]^{\mm} \nonumber 
\end{eqnarray}
which is compatible, via the period homomorphism,  with  complex conjugation on $\C$.
If $\Delta_{\am,\mm}: \Pe^{\mm}\rightarrow \Pe^{\am} \otimes \Pe^{\mm}$ denotes the coaction, then clearly $\Delta_{\am,\mm} c = (\id \otimes c) \Delta_{\am,\mm}$.  
Since
 $$c (\LM ) =  -\LM $$
it follows that $c$ acts on a decomposition $(\ref{PempDecomp})$ by multiplying $(\LM)^n$ by $(-1)^n$.

 \begin{cor} \label{corRealFrob} If   \, $\Pemp_{\R}$  (respectively\,  $\Pemp_{i \R}$) denotes the subspace of $\Pemp$ of  invariants (anti-invariants) of the map   $c$, 
 then we have 
 \begin{eqnarray}
 \Pemp_{i \R}  & \cong  & \Pemp_{\R}  \,  \LM \  \nonumber  \\
\hbox{ and }  \qquad  \Pemp_{\R}  &\cong & \Pe^{\am} \otimes \Q[ (\LM)^2]  \nonumber 
  \end{eqnarray} 
 with respect to some choice of  decomposition $(\ref{PempDecomp})$.
 \end{cor}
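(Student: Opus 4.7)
The plan is to transport the $c$-action along the non-canonical decomposition $(\ref{PempDecomp})$ supplied by the preceding corollary and read off invariants and anti-invariants directly. The core observation is that, under this identification, $c$ acts as the identity on the $\Pe^{\am}$-factor and by $\LM \mapsto -\LM$ on the polynomial factor, so that both assertions will reduce to elementary linear algebra on $\Q[\LM]$.

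First I would verify the $c$-equivariance of $(\ref{PempDecomp})$. By construction, this isomorphism is obtained as $(\id \otimes \pi_{\LM}) \circ \Delta_{\am,\mm}$. The identity $\Delta_{\am,\mm}\circ c = (\id \otimes c) \circ \Delta_{\am,\mm}$ recorded in \S\ref{sectRealstruct} shows that $c$ leaves the left-hand $\Pe^{\am}$-factor of $\Delta_{\am,\mm}(\xi)$ untouched and only affects the right-hand $\Pemp$-factor. Combined with $c(\LM)=-\LM$, which forces $\pi_{\LM}$ to intertwine $c$ with the sign involution $\sigma:(\LM)^n \mapsto (-1)^n (\LM)^n$ on $\Q[\LM]$, this shows that the isomorphism $(\ref{PempDecomp})$ carries $c$ to $\id_{\Pe^{\am}} \otimes \sigma$.

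Once this is in place both claims are immediate. The $\sigma$-invariants of $\Q[\LM]$ form $\Q[(\LM)^2]$ and the $\sigma$-anti-invariants form $\LM \cdot \Q[(\LM)^2]$; transporting back through $(\ref{PempDecomp})$ yields $\Pemp_{\R} \cong \Pe^{\am} \otimes \Q[(\LM)^2]$ and $\Pemp_{i\R} = \Pemp_{\R}\,\LM$. The only non-formal step is the $c$-equivariance of the decomposition, and I expect this to be the main (and only) obstacle — but it is immediate from the two compatibilities $\Delta_{\am,\mm}\circ c = (\id \otimes c)\circ \Delta_{\am,\mm}$ and $c(\LM)=-\LM$, both of which are already established in the preceding discussion.
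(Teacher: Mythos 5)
You correctly identify the $c$-equivariance of the decomposition $(\ref{PempDecomp})$ as the crux, but the step is not immediate from the two compatibilities you cite, and this is exactly where the argument has a gap. The identity $\Delta_{\am,\mm}\,c=(\id\otimes c)\,\Delta_{\am,\mm}$ says only that $c$ is a morphism of $\Pe^{\am}$-comodules, and $c(\LM)=-\LM$ pins down $c$ on the subalgebra $\Q[\LM]\subset\Pemp$; together these do \emph{not} force $\pi_{\LM}\circ c=\sigma\circ\pi_{\LM}$ on all of $\Pemp$, because a $\Pe^{\am}$-comodule endomorphism of $\Pe^{\am}\otimes\Q[\LM]$ need not be of the form $\id\otimes\psi$ (it may also translate the $\Pe^{\am}$-coordinate). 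Concretely: the decomposition depends on a choice of $s'\in\Isom(\omega_B,\omega_{dR})(\Q)$, and under the resulting identification $\Spec\Pe^{\mm}\cong G_{dR}$ the involution $c$ becomes left translation by $h=s'\,c\,(s')^{-1}\in G_{dR}(\Q)$, a rational involution whose image in $\G_m=G_{dR}/U_{dR}$ is $-1$. One checks that $c$ goes over to $\id\otimes\sigma$ precisely when $h=-1\in\G_m$; for a generic $s'$ the unipotent component $u_h$ of $h$ is non-trivial and $c$ acts non-trivially on the $\Pe^{\am}$-factor. This is exactly why the corollary says ``with respect to some choice of decomposition''.

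The missing step is to show that $s'$ can be normalised so that $h=-1$. Writing $h=u_0\cdot(-1)$ with $u_0\in U_{dR}(\Q)$, the relation $h^2=1$ gives $u_0\bar u_0=1$, where $\bar u:=(-1)\,u\,(-1)$. Taking $v=u_0^{-1/2}\in U_{dR}(\Q)$ (square roots exist and are unique in a pro-unipotent group over $\Q$) gives $v^{-1}\bar v=u_0$, hence $v\,h\,v^{-1}=-1$, and replacing $s'$ by $v\,s'$ achieves the required decomposition. (Equivalently, invoke the fact that the finite---hence reductive---subgroup $\{1,h\}\subset G_{dR}$ is conjugate by an element of $U_{dR}(\Q)$ into the Levi factor $\G_m$.) Once this normalisation is in place, your reading-off of $\sigma$-invariants and anti-invariants of $\Q[\LM]$ is correct and agrees with the paper's conclusion.
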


\subsection{Universal comparison map} 
The identity map $\id: \Pe^{\mm} \rightarrow \Pe^{\mm}$ defines a canonical element 
in  $ (\Spec \Pe^{\mm})(\Pe^{\mm})$ which we denote by 
$$\mathrm{comp}^{\mm}_{B,dR} \in  \Isom_{\omega_{dR}, \omega_{B}}(\Pe^{\mm})\ .$$ 
It reduces to the usual comparison map $\mathrm{comp}_{B,dR}$ after applying
the period homomorphism to the coefficient ring $\Pe^{\mm}$. 
As a formula, it is given for $M \in \M$ by 
\begin{eqnarray} \label{universalBdRforM}
\mathrm{comp}_{B,dR}^{\mm} : M_{dR} &   \To &  M_B \otimes \Pe^{\mm} (M) \\
\eta & \mapsto & \sum_{x} x \otimes [M, \eta, x^{\vee}]^{\mm} \nonumber 
\end{eqnarray}
where the sum ranges over a basis $\{x\}$ of $M_B$, and $\{x^{\vee}\}$ is the dual basis. 
We can also write $(\ref{universalBdR})$ as  an isomorphism after tensoring with all motivic periods:
\begin{equation}  \label{universalBdR}
\mathrm{comp}_{B,dR}^{\mm} : M_{dR} \otimes \Pe^{\mm} \overset{\sim}{\To} M_{B} \otimes \Pe^{\mm}  \ .
\end{equation}
In the other direction, we have a universal map
$$\mathrm{comp}_{dR,B}^{\mm} : M_{B}    \To   M_{dR} \otimes \Pe^{\omega_B,\omega_{dR}} (M) $$
which is defined in a similar way. It will not be used here.

The universal comparison maps can be used to compare the action of the de Rham motivic Galois group $G_{dR}$  with the action of the 
Betti Galois group $G_B$ on $\Pe^{\mm}$.

\section{Single-valued motivic periods} \label{sectSVMP}
The single-valued period is an analogue of the period homomorphism for de Rham periods.
First, we construct a well-defined map (the single-valued motivic period)
$$\svm : \Pe^{\am} \To \Pemp$$
and define the single-valued period to be 
$$\sv : \Pe^{\am} \To \C $$
 by composing with the usual period $\per:\Pe^{\mm}\rightarrow \C$. The map $\sv$ is similar to what is sometimes referred to as the `real period' in the literature \cite{GoHyp}, \S4. Since multiple zeta values are already
 real numbers, this terminology could lead to confusion, so we prefer not to use it.
Note that the single-valued periods of a motive $M$ are not in fact periods of $M$, but elements of the algebra generated by the  periods of  $M$.

In the latter half  of the paper, we shall compute the single-valued versions of motivic multiple zeta values using the motivic fundamental group of $\Pro^1 \backslash \{0,1,\infty\}$.  
More precisely, we compute the following map $$\Pemp \overset{\pim}{\To} \Pe^{\am}  \overset{\sv^{\mm}}{\To} \Pemp$$
on the subspace $\Ho \subset \Pemp$ of motivic multiple zeta values.

\subsection{ Single-valued motivic periods}
The weight-grading on $\Pe^{\mm}$ is given by  an action of $\G_m$, which we shall denote by $\tau$. Thus  $\tau(\lambda)$ is  the map which in weight $n$ 
acts via multiplication by $\lambda^n$, for any $\lambda \in \Q^{\times}$.

\begin{defn} Let $\sigma : \Pe^{\mm} \To \Pe^{\mm}$ be the involution 
\begin{equation} \label{defsigma}
 \sigma=\tau(-1) \, c 
 \end{equation} 
 where $c$ is the real Frobenius of \S\ref{sectRealstruct}. For example,  $\sigma(\LM)=\LM$.
\end{defn}
 
 \begin{rem} \label{remsigmabar} If $\Delta_{\am,\mm} : \Pe^{\mm} \rightarrow \Pe^{\am} \otimes \Pe^{\mm}$ denotes the coaction, then 
 $$\Delta_{\am,\mm} \sigma =( \overline{\sigma}Ê\otimes \sigma) \circ \Delta_{\am,\mm}$$ 
where $\overline{\sigma}: \Pe^{\am} \rightarrow \Pe^{\am}$ is given by the action of $\tau(-1)$  on $\Pe^{\am}$ by conjugation. In other words, $\overline{\sigma}$  acts by multiplication by  $(-1)^n$ in degree 
$n$, where the degree is $(\ref{degmminusn})$.
\end{rem}

 Consider the following affine scheme over $\Q$:
 $$\Pro= \Spec(\Pe^{\mm})\qquad \qquad \big( = \Isom(\omega_{dR}, \omega_B) \big)\ .$$ 
 The coaction $\Pe^{\mm} \rightarrow \Pe^{\dR} \otimes \Pe^{\mm}$ defines an action we denote by $\circ$:
$$\circ: G_{dR} \times \Pro \To \Pro$$
and makes  $\Pro$ a torsor over $G_{dR}$ (by proposition \ref{propexistsisom}).  The maps $\id, \sigma : \Pe^{\mm} \rightarrow \Pe^{\mm}$
can be viewed as  elements
$\id, \sigma \in \Pro(\Pe^{\mm})$.

\begin{defn} Define $\sv^{\mm}$  to be the unique element of $G_{dR}(\Pe^{\mm})$ such that
\begin{equation} \label{svmdef1}
\sv^{\mm} \circ \sigma = \id\ . \end{equation}
\end{defn}

Let us compute $\sv^{\mm}(\LDR)$. Recall that   $\Delta_{\dR, \mm} \LM = \LDR \otimes \LM$. Thus 
$$ \LM = (\sv^{\mm} \circ \sigma)( \LM) = \mu (\sv^{\mm} \otimes \sigma) ( \LDR \otimes \LM) =  \svm(\LDR) \sigma(\LM)$$
where $\mu$ denotes multiplication.  Since $\sigma (\LM)= \LM$, we deduce that
 $\sv^{\mm}(\LDR) =1$.  Therefore $\sv^{\mm}$ actually lies in the image of $U_{dR}(\Pe^{\mm})$  in  $G_{dR}(\Pe^{\mm})$ and we can view it as a homomorphism
$\sv^{\mm} : \Pe^{\am} \To \Pe^{\mm}.$ Even more precisely, we have:

\begin{prop}  For all $g \in G_{dR}$, and $\xi \in \Pe^{\am}$, 
\begin{equation} \label{svconjugation}
\sv^{\mm} (c_g  \xi ) = g\,  \sv^{\mm} ( \xi)
\end{equation}  
where $c_g$ denotes the action of $g\in G_{dR}$ on $\Pe^{\am}$ by twisted conjugation 
$$c_g(\xi) = g\, \xi\,  \overline{g}^{-1}$$
where $\overline{g} = \tau(-1) \,g\,  \tau(-1)$. 
In particular,
$\sv^{\mm}$ defines a  homomorphism
\begin{equation}
\sv^{\mm} : \Pe^{\am} \To \Pemp\ 
\end{equation}Ê
which is homogeneous for the weight-gradings on both sides.
\end{prop}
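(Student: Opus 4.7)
The plan is to prove the equivariance $\sv^{\mm}(c_g\xi)=g\cdot\sv^{\mm}(\xi)$ as the main point, from which homogeneity and the containment $\sv^{\mm}(\Pe^{\am})\subset\Pemp$ will follow by specializing $g$ to $\G_m$ and to $U_{dR}$ respectively. The approach is to exploit the uniqueness of $\sv^{\mm}$ as the element of $G_{dR}(\Pe^{\mm})$ characterized by $\sv^{\mm}\circ\sigma=\id$ in $\Pro(\Pe^{\mm})$.

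The first step is to upgrade Remark~\ref{remsigmabar} to the torsor identity
$$g\circ\sigma \;=\; \sigma\circ\overline g \qquad\text{in } \Pro(\Pe^{\mm}),$$
valid for every $g\in G_{dR}$, where the right-hand composition makes sense via the bitorsor structure of $\Pro$. The underlying reason is formal: the left $G_{dR}$-action on $\Pe^{\mm}$ commutes with the right $G_B$-action in which the real Frobenius $c$ lives, so
$$\sigma\, g\,\sigma = \tau(-1)\,c\,g\,c\,\tau(-1) = \tau(-1)\,g\,\tau(-1) = \overline g$$
as operators on $\Pe^{\mm}$, giving $g\sigma = \sigma\overline g$. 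This is the general version of the $g=\tau(-1)$ case spelled out in the Remark.

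Granting this, observe that $g\overline g^{-1}\in U_{dR}$ (both $g$ and $\overline g$ project to the same element of $\G_m$), so $g\sv^{\mm}\overline g^{-1}$ lies in $U_{dR}(\Pe^{\mm})$. A short torsor computation using $g\circ\sigma = \sigma\circ\overline g$ together with the defining equation $\sv^{\mm}\circ\sigma = \id$ yields $(g\sv^{\mm}\overline g^{-1})\circ\sigma = \id$. By uniqueness of $\sv^{\mm}$ as the solution to this equation in $U_{dR}(\Pe^{\mm})$, one concludes $g\sv^{\mm}\overline g^{-1}=\sv^{\mm}$. Reading this as an equality of $\Q$-algebra maps $\Pe^{\am}\to\Pe^{\mm}$ (via $\Pe^{\am}=\Or(U_{dR})$), one recovers exactly the equivariance $\sv^{\mm}(c_g\xi)=g\cdot\sv^{\mm}(\xi)$.

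For the remaining conclusions: setting $g=\tau(\lambda)\in\G_m$ yields $\overline g=\tau(\lambda)$ by abelianness, and $c_{\tau(\lambda)}$ then acts on a weight-$n$ element of $\Pe^{\am}$ (in the sense of $(\ref{degmminusn})$) as multiplication by $\lambda^n$, so the equivariance gives $\tau(\lambda)\cdot\sv^{\mm}(\xi)=\lambda^n\sv^{\mm}(\xi)$, proving that $\sv^{\mm}$ is weight-homogeneous. For $g\in U_{dR}$, the equivariance shows $\sv^{\mm}(\Pe^{\am})$ is stable under $\Delta_{\am,\mm}$; combined with weights $\geq 0$ (inherited from $\Pe^{\am}$) and the fact that $\sv^{\mm}$ is a $\Q$-algebra homomorphism, Definition~\ref{defngeomper} places the image inside $\Pemp$. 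The main difficulty lies in justifying the torsor identity $g\circ\sigma=\sigma\circ\overline g$ and the intermediate manipulation $(g\sv^{\mm}\overline g^{-1})\circ\sigma=\id$ cleanly: although the commutation of the two bitorsor actions is formal, some care is needed to track which side each action lives on and to verify that the resulting element indeed lies in $U_{dR}$ rather than in the larger $G_{dR}$.
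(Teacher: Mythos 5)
Your overall strategy mirrors the paper's --- use the characterization $\sv^{\mm}\circ\sigma=\id$ together with the commutation of $\sigma$ with the $G_{dR}$-action --- but the central step fails, and the conclusion $g\,\sv^{\mm}\,\overline g^{-1}=\sv^{\mm}$ in $U_{dR}(\Pe^{\mm})$ is false. Read as a $\Q$-algebra map $\Pe^{\am}\to\Pe^{\mm}$, the convolution $g\,\sv^{\mm}\,\overline g^{-1}$ is $\xi\mapsto\sv^{\mm}(c_g\xi)$ (the outer factors $g,\overline g^{-1}\in G_{dR}(\Q)$ have rational coefficients and pull through to conjugate the argument), so your identity would read $\sv^{\mm}(c_g\xi)=\sv^{\mm}(\xi)$ rather than $\sv^{\mm}(c_g\xi)=g\cdot\sv^{\mm}(\xi)$. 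That would force $\sv^{\mm}$ to take values in $(\Pe^{\mm})^{G_{dR}}=\Q$, whereas for instance $\sv^{\mm}(\zetaa(3))=2\zetam(3)$ is not Galois-invariant.

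The root of the error is the passage from the operator identity $\sigma g\sigma=\overline g$ on $\Pe^{\mm}$ (which is correct) to the purported torsor identity $g\circ\sigma=\sigma\circ\overline g$: the right-hand side has no bitorsor meaning, since $\overline g$ lies in $G_{dR}$, not $G_B$, and the operator $g\sigma$ (i.e.\ $x\mapsto g\cdot\sigma(x)$) is not the same as the torsor action $g\circ\sigma$ precisely because $\sigma$ is not $G_{dR}$-equivariant. What Remark~\ref{remsigmabar} actually yields is $\sigma_g=\overline g\circ\sigma$, where $\sigma_g(x)=g\cdot\sigma(x)$ is $\sigma$ with $g$ acting on the $\Pe^{\mm}$-coefficients. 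The paper therefore applies $g$ to the coefficients of the \emph{whole} defining equation, obtaining $\sv^{\mm}_{\!g}\circ\sigma_g=\id_g$, where crucially $\id_g=g\circ\id\neq\id$; unwinding this via the torsor property gives $\sv^{\mm}_{\!g}=g\circ\sv^{\mm}\circ\overline g^{-1}$. The coefficient twist $\sv^{\mm}_{\!g}$ on the left-hand side, which your argument drops, is exactly what produces the factor $g\cdot$ on the right of the stated equivariance. Your deduction of homogeneity and of $\sv^{\mm}(\Pe^{\am})\subset\Pemp$ from the equivariance, on the other hand, is fine.
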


\begin{proof}  
For any $g\in G_{dR}$, 
define  $\sv^{\mm}_{\!g} : \Pe^{\am} \rightarrow \Pe^{\mm}$  to be $\sv^{\mm}_{\!g}(x) = g \,\sv^{\mm}(x)$,   and similarly define  $\sigma_g, \id_g\in G_{dR}(\Pe^{\mm})$, where 
$\sigma_g(x) =g\, \sigma(x)$, $\id_g(x) = g \, \id(x)$, and
the action of $g$ is on the ring of  coefficients $\Pe^{\mm}$.
  By  the definition  $(\ref{svmdef1})$  of $\sv^{\mm}$,   we have
$$\sv^{\mm}_{\!g} \circ \sigma_g = \id_g\ .$$
Clearly $\id_g = g$, but     $\sigma_g = \overline{g} \circ \sigma$ by remark \ref{remsigmabar}. Therefore 
$$\sv^{\mm}_{\!g} \circ  \overline{g} \circ \sigma = g\ .$$
Since $\Pro $  is a torsor over $G_{dR}$, this has the unique solution $\sv^{\mm}_{\!g} = g \circ \sv^{\mm} \circ \overline{g}^{\circ -1}$, which is precisely $(\ref{svconjugation}).$ Since the weight-grading on $\Pe^{\am}$ is given by conjugation  by $g$ for $g\in  \G_m$
and $\overline{g}=g$ for such $g$ (because $\G_m$ is commutative),  we
deduce from $(\ref{svconjugation})$ that $\sv^{\mm}$ is homogeneous in the weight. In particular,  since $\Pe^{\am}$ has weight $\geq 0$, the image of $\sv^{\mm}$ has weight $\geq 0$, is stable under $G_{dR}$, and hence is contained in $\Pemp$.
\end{proof}

The formula $(\ref{svconjugation})$ can be translated into coactions as follows.  Let 
$$ \Lo^{\am} = {\Pe_{>0}^{\am} \over \Pe_{>0}^{\am}\Pe_{>0}^{\am}}$$
denote the Lie coalgebra of indecomposable elements of $\Pe^{\am}$. Projecting from $\Pe^{\am}_{>0}$ to  $\Lo^{\am}$ defines  infinitesimal  versions of the usual coactions $(\ref{mmcoaction})$
\begin{eqnarray}
\delta : \Pe^{\mm}  &\To & \Lo^{\am} \otimes \Pe^{\mm} \nonumber \\
\delta_L : \Pe^{\am}  &\To & \Lo^{\am} \otimes \Pe^{\am} \nonumber \\
\delta_R : \Pe^{\am}  &\To &  \Pe^{\am} \otimes \Lo^{\am} \cong \Lo^{\am} \otimes \Pe^{\am} \nonumber 
\end{eqnarray}
where $\delta_L, \delta_R$ are obtained from the left and right coactions of $\Pe^{\am}$ on itself. Then
\begin{equation}Ê\label{deltasv}
\delta \, \sv^{\mm} ( \xi)  = \sv^{\mm} (\delta_L \xi)  +  (\overline{S} \otimes \id)\,  \sv^{\mm}( \delta_R\xi )\ , \end{equation}
where $\overline{S}:  \Lo^{\am}\rightarrow  \Lo^{\am} $ is multiplication by $(-1)^n$ in degree $n$ followed by the 
infinitesimal version of the  antipode $S: \Lo^{\am} \rightarrow  \Lo^{\am}$.  

\begin{defn} Let $\Pe^{\sv}\subset \Pemp$ denote the image of the map $\sv^{\mm}$.  We shall call it the ring of single-valued motivic periods.
\end{defn} 

\subsection{Properties of  the single-valued motivic period}
For  computations,  it is convenient to trivialize the torsor $\Pro$ as follows.
By proposition \ref{propexistsisom}, we can choose an isomorphism of fiber functors 
$ s' \in \Isom(\omega_B, \omega_{dR}).$ It defines an isomorphism $(\ref{sisomonbeta})$
\begin{equation} \label{sPdRtoPmm}Ê
 s:\Or(G_{dR}) = \Pe^{\dR} \overset{\sim}{\To} \Pe^{\mm} \ ,
 \end{equation} 
  where $s= (s')^t$,  which we  view as a $\Pe^{\mm}$-valued point of $G_{dR}$, denoted  $s \in G_{dR}(\Pe^{\mm})$.
The action of the involution $(\ref{defsigma})$ on its coefficients will be denoted by $^{\sigma}$. 

Then $\sv^{\mm} \in G_{dR}(\Pe^{\mm})$ can be computed via the expression  
\begin{equation} \label{svmdef}
\sv^{\mm} = s  \circ ({}^{\sigma}\! s)^{\circ -1} 
\end{equation} 
where the inversion and multiplication $\circ$ take place in the group $G_{dR}$.  

\begin{rem}
To check that $(\ref{svmdef})$ is well-defined, 
let $s_1', s_2'\in \Isom(\omega_{B}, \omega_{dR})(\Q)$. Since the latter is a 
$(G_B, G_{dR})(\Q)$-bitorsor, there exists an element $\rho' \in G_{dR}(\Q)$ such that $s_2'=\rho' s_1'$.
Transposing gives  $s_2 = s_1\circ  \rho$, where $\rho$ is the image of $\rho'$ in $ G_{dR} (\Pe^{\mm})$ via $\Q \subset \Pe^{\mm}$. In particular, ${}^{\sigma}\!\rho=\rho$ since its coefficients
are rational of weight zero.  Thus
$$  s_2  \circ ({}^{\sigma}\! s_2)^{\circ -1}  = 
s_1 \circ \rho   \circ ({}^{\sigma}\! \rho)^{\circ -1} \circ ({}^{\sigma}\! s_1)^{\circ -1}=s_1  \circ ({}^{\sigma}\! s_1)^{\circ -1}\ ,$$
and $(\ref{svmdef})$ is well-defined, as expected. 
\end{rem}

\begin{defn} Let $\Pe^{\mm,0} \subset \Pemp $ denote the subring of motivic periods
$$\Pe^{\mm,0} = \bigcap_{s} s(\Pe^{\am})$$
where $s$ ranges over  maps $s:\Pe^{\am} \rightarrow \Pemp$ induced by decompositions $(\ref{PempDecomp})$.
Since $\pim$ is injective on the image of such an $s$, it is injective on $\Pe^{\mm,0}$.

\end{defn}

\begin{lem}  \label{imageofsvm} We have $Ê\Pe^{\sv} \subset \Pe^{\mm,0}$. 
 In particular, 
$\pim: \Pe^{\sv}\rightarrow \Pe^{\am} $ is injective.  

The  compositum  $\pim Ê\sv^{\mm} : \Pe^{\am} \rightarrow \Pe^{\am}$ is given by the element  
 \begin{equation} \label{idcircsigma}
 \id \circ \sigma^{\circ -1}\in U_{dR}(\Pe^{\am})\ . 
 \end{equation}

\end{lem}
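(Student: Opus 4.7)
The lemma consists of three claims: (i) $\Pe^{\sv} \subset \Pe^{\mm,0}$; (ii) $\pim$ restricted to $\Pe^{\sv}$ is injective; and (iii) $\pim\circ\sv^{\mm} = \id \circ \sigma^{\circ -1}$ in $U_{dR}(\Pe^{\am})$. The plan is to establish (iii) first by direct computation, deduce (ii) from (iii) by a weight-graded dimension count, and then address (i).

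For (iii), I would start from the formula $\sv^{\mm} = s \circ ({}^\sigma s)^{\circ -1}$ of $(\ref{svmdef})$, viewed as an element of $G_{dR}(\Pe^{\mm}) = \Hom_{\Q\text{-alg}}(\Pe^{\dR}, \Pe^{\mm})$. Since $\sv^{\mm}(\LDR) = 1$, it lies in $U_{dR}(\Pe^{\mm})$ and restricts to a $\Q$-algebra map $\Pe^{\am} \to \Pemp$. Unpacking the convolution (the $\LDR^{\pm \deg\xi_{(2)}}$ factors arising from the coproduct $\Delta_{\dR}$ and from the de Rham antipode $S_{\dR}$ on $\Pe^{\am} \subset \Pe^{\dR}$ cancel), one obtains
\[
\sv^{\mm}(\xi) \;=\; \sum s(\xi_{(1)}) \, \sigma\!\big(s(S(\xi_{(2)}))\big),
\]
where $\Delta_{\am} \xi = \sum \xi_{(1)} \otimes \xi_{(2)}$ and $S$ is the antipode of $\Pe^{\am}$. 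I then apply $\pim$ using three facts: (a) $\pim$ is a ring homomorphism, from the decomposition $(\ref{PempDecomp})$; (b) $\pim \circ s|_{\Pe^{\am}} = \id_{\Pe^{\am}}$ by definition of the section; and (c) $\pim \circ \sigma = \overline\sigma \circ \pim$ on $\Pemp$, which follows from Remark \ref{remsigmabar} by applying $\id \otimes \varepsilon$ and observing that $\varepsilon \circ \sigma = \varepsilon$ since $\sigma$ is the identity on $\Pemp_0 = \Q$. This produces
\[
(\pim \circ \sv^{\mm})(\xi) \;=\; \sum \xi_{(1)} \cdot \overline\sigma(S(\xi_{(2)})),
\]
which is precisely the convolution product $\id \cdot \overline\sigma^{-1}$ in the group $U_{dR}(\Pe^{\am})$, using that the group inverse of $\overline\sigma$ is $\overline\sigma \circ S$.

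For (ii), the map $\pim\circ\sv^{\mm}$ is weight-graded, with leading term the identity (from $\xi_{(2)} = 1$) plus lower-weight corrections coming from the coproduct; it is therefore a ring automorphism of $\Pe^{\am}$. This forces $\sv^{\mm}$ to be injective, so $\dim\Pe^{\sv}_n = \dim \Pe^{\am}_n$ in each weight $n$, and then $\pim$ restricted to $\Pe^{\sv}$ is a bijection onto $\Pe^{\am}$ by a dimension count in each weight.

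For (i), for any fixed $s$, I would recast the formula from (iii) as $\sv^{\mm}(\xi) = s(\pim\sv^{\mm}(\xi)) + T_s(\xi)$ with $T_s(\xi) \in s(\Pe^{\am}) \cdot \LM \Q[\LM]$; the condition $\sv^{\mm}(\xi) \in s(\Pe^{\am})$ then becomes $T_s(\xi) = 0$. Explicitly, $T_s(\xi) = \sum s(\xi_{(1)}) \bigl[ \sigma(s(S(\xi_{(2)}))) - s(\overline\sigma(S(\xi_{(2)}))) \bigr]$, so $T_s$ measures the failure of $\sigma$ to commute with the section $s$ on $\Pe^{\am}$. The main obstacle is that this discrepancy is nonzero term by term, yet must vanish after contraction against the coproduct; the vanishing is to be extracted from the $s$-independence of $\sv^{\mm}$ (Remark following $(\ref{svmdef})$) combined with the canonical value of $\pim \sv^{\mm}$ from (iii). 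Once (i) is in hand, the injectivity in (ii) also follows immediately from the remark, contained in the definition of $\Pe^{\mm,0}$, that $\pim$ is injective on each $s(\Pe^{\am})$.
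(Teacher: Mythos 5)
Your part (iii) is essentially correct and follows the paper's computation. But part (ii) contains a genuine error, and part (i) is left incomplete in a way the paper avoids.

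\textbf{The error in (ii).} You argue that $\pim\sv^{\mm}$ has ``leading term the identity plus lower-weight corrections'' and is therefore a ring automorphism. This is false. In the formula
$(\pim\sv^{\mm})(\xi) = \sum \xi_{(1)}\,\overline\sigma(S(\xi_{(2)}))$
the coproduct term $1 \otimes \xi$ contributes $\overline\sigma(S(\xi))$, which has the \emph{same} weight as $\xi$, not lower. The genuine leading term (modulo products) is therefore $\xi + \overline\sigma S(\xi) \equiv \xi - \overline\sigma(\xi)$, which vanishes whenever $\xi$ has even weight. So $\pim\sv^{\mm}$ is far from an automorphism --- it has a large kernel, as the paper states immediately after the lemma, and this is precisely what makes Proposition \ref{propmain} work (only the odd-weight generators of $\Lo^{\am}$ survive). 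In particular $\sv^{\mm}$ itself is not injective, so the dimension count you use to deduce injectivity of $\pim$ on $\Pe^{\sv}$ collapses. The correct logic is the reverse: injectivity of $\pim$ on $\Pe^{\sv}$ is an immediate consequence of (i), because the definition of $\Pe^{\mm,0}$ already records that $\pim$ is injective on each $s(\Pe^{\am})$ and hence on $\Pe^{\mm,0}$. No dimension counting is needed (or correct).

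\textbf{The gap in (i).} You frame $\sv^{\mm}(\xi) = s(\pim\sv^{\mm}(\xi)) + T_s(\xi)$ and observe that $T_s$ must vanish although it is nonzero term by term, but you do not close the argument. The paper sidesteps this by working directly in $U_{dR}(\Pe^{\mm})$: a section $s$ of $\pim$ gives an element $s \in U_{dR}(\Pe^{\mm})$ with coefficients in $s(\Pe^{\am})$, and (by the same reasoning as the remark after $(\ref{svmdef})$, now in $U_{dR}$) one has $\sv^{\mm} = s\circ({}^\sigma s)^{\circ -1}$. The one fact you would need to make this self-contained is that $\sigma$ preserves $s(\Pe^{\am})$. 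This holds because $s' := \sigma\circ s\circ\overline\sigma$ is again a graded $\Pe^{\am}$-comodule algebra section of $\pim$ (use remark \ref{remsigmabar}), and any two such sections have the same image in $\Pemp$ (they differ by a $\Q$-rational element of $U_{dR}$, which acts on $s(\Pe^{\am})$ through the coproduct of $\Pe^{\am}$ and rational scalars). Hence $\sigma(s(\Pe^{\am})) = s'(\Pe^{\am}) = s(\Pe^{\am})$, so all the coefficients of $s$, of ${}^\sigma s$, and therefore of $\sv^{\mm}$ lie in $s(\Pe^{\am})$. This gives (i) without any term-by-term cancellation analysis, and (ii) then falls out for free.
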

\begin{proof}  A choice of isomorphism $(\ref{Pemdecomp})$ defines  a map $s: \Pe^{\am} \rightarrow \Pemp$ (and hence an element $s\in U_{dR}(\Pe^{\mm})$) which we can use  to compute $\sv^{\mm}$. 
 By a similar argument to the discussion preceding  $(\ref{svmdef})$,  except that we work in $U_{dR}$ instead of $G_{dR}$,  we have  $\sv^{\mm} = s  \circ ({}^{\sigma}\! s)^{\circ -1}$. The  coefficients of $s$, and a fortiori
 $\sv^{\mm}$, lie in the subspace $s(\Pe^{\am}) \subset \Pemp$.  This proves the first statement. 
 
 For the second statement, observe that  $\pim s $ is the identity map on $\Pe^{\am}$, and therefore $\pim \sv^{\mm} =   \id   \circ ({}^{\sigma}\id)^{\circ -1}$, which gives exactly $(\ref{idcircsigma}).$
\end{proof}

In particular,  the map $\pim  \sv^{\mm}: \Pe^{\am} \rightarrow \Pe^{\am}$ is not the identity, and    has a large kernel. The previous lemma will be used in \S \ref{sectStructure} to determine the structure of $\Pe^{\sv}$.

\subsection{Formulae} We can translate $(\ref{svmdef})$ into a  formula  as follows.  Let $H$ be any connected, commutative graded Hopf algebra, with coproduct $\Delta:H \rightarrow H \otimes H$.  Denote its reduced coproduct
by $\Delta^{(1)} = \Delta - 1 \otimes \id - \id \otimes 1$, and its iterated coproduct by 
$$\Delta^{(n)}= (\id \otimes \Delta^{(n-1)}) \Delta^{(1)} =  (\Delta^{(n-1)} \otimes \id) \Delta^{(1)} $$
for $n\geq 2$. 
By a version of Sweedler's notation we can write
$$\Delta^{(n-1)} (x) = \sum_{(x)} x^{(1)} \otimes \ldots \otimes x^{(n)}$$
where all elements $x^{(i)}$ have degree $\geq 1$. In any such Hopf algebra, the antipode can be written
$S=\sum_{n\geq 1} (-1)^n \mu_n \Delta^{(n)}$, where $\mu_n: H^{\otimes n} \rightarrow H$ is the $n$-fold multiplication, and 
$\Delta^{(0)}$ is the identity. In Sweedler's notation, this is 
$$S(x) = \sum_{n\geq 1}  (-1)^n \sum_{(x)}   x^{(1)} \ldots x^{(n)}$$

In order to apply the above, we must consider  the Hopf algebra $\Pe^{\am,0}$ defined to be the image of $\Pe^{\am}$ in  
$\Pe^{\dR}$  via the isomorphism $(\ref{PdRdecomp})$.  It is a commutative graded Hopf algebra, spanned by objects $[M,v,f]^{\dR}$ where $f$ is in degree $0$ by remark \ref{remPa0}.   A choice of homomorphism   $(\ref{sPdRtoPmm})$ gives a homomorphism
 $s: \Pe^{\am,0} \rightarrow \Pe^{\mm}$. Applying the previous remarks  to  the Hopf algebra $\Pe^{\am,0}$, we obtain
 \begin{eqnarray}
 \svm: \Pe^{\am,0} & \To & \Pe^{\mm} \nonumber \\
\svm(\xi)  & =  &  s(\xi) + \widetilde{s}(\xi) + \sum_{(\xi)} s(\xi^{(1)})  \widetilde{s}(\xi^{(2)})  \nonumber
 \end{eqnarray}
 where we use the notation
 $$ \widetilde{s}(\xi) =  \sum_{n\geq 1} \sum_{(\xi)} (-1)^n\,  {}^{\sigma}\!s(\xi^{(1)}) \ldots   {}^{\sigma}\!s(\xi^{(n)})\ .$$
 This essentially follows from the formula $(\ref{svmdef})$, after replacing  $G_{dR}$ with  $U_{dR}$.

Concretely, the motivic periods $s(\xi)$ can be computed as follows.  If $M\in \M$,  then the element $s$ defines an isomorphism 
$s: M_{B} \rightarrow M_{dR}.$ Let $s^{t}$ be its transpose. Then  if $v\in M_{dR}$  and $f\in ((M_{dR})_0)^{\vee}$ is  of degree $0$,
$$ s [M, v, f]^{\am} = [M, v, s^{t} f]^{\mm}\ . $$
In general we must first compose with  $\Pe^{\am} \overset{\sim}{\rightarrow}\Pe^{\am,0}$, which introduces a Tate twist by remark 
 \ref{remPa0}.
 It follows that the single-valued  motivic periods of $M$ are in  fact products of motivic periods of  Tate twists of $M$.


\section{The motivic fundamental group of $\Pro^1 \backslash \{0,1,\infty\}$} \label{sect21} 

The main references for   this section are \cite{DeP1}, \cite{DG}, \cite{BMTZ}. A  good  introduction  can be found in \cite{DB}. 

Let $X=\Pro^1\backslash \{0,1,\infty\}$, and let 
$\tone_0, -\tone_1$
denote the  tangential base points on $X$  given by the  vector  $1$ at $0$, and  the  vector $-1$ at $1$.  Denote the  motivic fundamental torsor of paths on $X$ by
$$\opi^{\mm}= \pi_1^{\mm}( X, \tone_0, -\tone_1)\ .$$ 
It is an affine scheme in the category $\MT(\Z)$. This means that there is a finitely generated  commutative algebra object
$\Or(\opi^{\mm}) \in \mathrm{Ind} \, \MT(\Z)$, and $\omega(\opi^{\mm})$ is defined to be $\Spec\!$ of the commutative algebra  $\omega(\Or(\opi^{\mm}))$, for any fiber functor $\omega$.

 We shall denote the de Rham 
realization   $\omega_{dR}(\opi^{\mm})$ of $\opi^{\mm}$ simply by 
$$\opi=  \Spec \Or(\opi)  \ ,$$
where $\Or(\opi)$ is isomorphic to $H^0( B( \Omega_{\log}^{\bullet}(\Pro^1,\{0,1,\infty\};\Q)))$, where $B$ is the bar complex.  Writing $e^0$ for ${dz\over z}$ and $e^1$ for ${dz\over 1-z}$, we can identify the latter with the 
graded $\Q$-algebra
$$\Or(\opi)\cong \Q \langle e^0, e^1 \rangle\ .$$
Its underlying vector space is spanned by the set of words $w$ in the letters $e^0, e^1$, together with the empty word, and  the multiplication is given by
 the shuffle product $\sha:  \Q \langle e^0, e^1 \rangle\otimes  \Q \langle e^0, e^1 \rangle \rightarrow  \Q \langle e^0, e^1 \rangle $ which is defined recursively by
 $$ (e_i w ) \sha (e_j w') = e_i (w \sha e_j w') + e_j (e_i w \sha w')$$
 for all  words $w, w'$ in $\{e_0,e_1\}$ and $i,j\in \{0,1\}$. The empty word will be denoted by $1$. It is the unit for the shuffle product: $1\sha w = w\sha 1$ for all $w$.

  The de Rham realization  $\opi$ is therefore isomorphic to $\Spec \Q\langle e^0, e^1\rangle$. It is  
 the affine scheme over $\Q$ which to any commutative unitary $\Q$-algebra $R$ associates the set of group-like formal power series in two non-commuting variables $e_0$ and $e_1$
$$\opi(R) = \{ S \in R\langle \!\langle e_0, e_1 \rangle \! \rangle^\times:  \Delta S= S \widehat{\otimes} S\}\  . $$
Here, $\Delta$ is the completed coproduct  $R\langle \!\langle e_0, e_1 \rangle \! \rangle \rightarrow R\langle \!\langle e_0, e_1 \rangle \! \rangle \widehat{\otimes}_R R\langle \!\langle e_0, e_1 \rangle \! \rangle$ for which the elements $e_0$ and $e_1$ are primitive: $\Delta e_i = 1\otimes e_i + e_i \otimes 1$ for $i=0,1$. 
 
Since the bar complex is augmented, we have an augmentation map $\opi \rightarrow \Q$ which is the projection onto the empty word. Dually, this corresponds to an element denoted
$$\ooi \in \opi(\Q)$$
which is called the canonical de Rham path from $\tone_0$ to $-\tone_1$.

On the other hand, the Betti realization of $\opi^{\mm}$ is the affine  scheme over $\Q$ given by the Mal\^cev  completion of the topological fundamental torsor of paths
$$\omega_B( \opi^{\mm}) \cong \pi^{un}_1(X(\C),\tone_0, -\tone_1) \ .$$
There is a natural map $\pi_1(X(\C),\tone_0, -\tone_1)  \rightarrow  \pi^{un}_1(X(\C),\tone_0, -\tone_1) (\Q)$.

\subsection{Drinfeld's associator} There is a canonical  straight line path  (`droit chemin')
\begin{equation}\label{dchdef}
\dch \in  \pi_1(X(\C),\tone_0, -\tone_1)
\end{equation}
 which therefore corresponds to an element   in $\omega_B(\opi^{\mm})$.  Via the isomorphism $(\ref{BdR})$, it defines  an element 
in $\opi(\C)$, which we denote by
$$ Z(e_0,e_1)  \in   \opi(\C)$$
It is  precisely Drinfeld's associator, and is given in low degrees by the formula
\begin{equation} \label{DrinfeldAssoc}
 Z(e_0,e_1)   =  1 + \zeta(2) {[}e_1,e_0] +  \zeta(3) ([e_1,[e_1,e_0]]+ [e_0,[e_0,e_1]] ) + \ldots   
 \end{equation}
In general, the coefficients are multiple zeta values. In fact, $(\ref{DrinfeldAssoc})$ is  the non-commutative generating series of (shuffle-regularized) multiple zeta values
$$Z(e_0,e_1) =  \sum_{w \in \{e_0,e_1\}^\times}  \zeta(w) \, w \ . $$
The coefficient $\zeta(w)$ is given by the regularized iterated integral
$$\zeta(e_{a_1}\ldots e_{a_n} ) = \int_{\dch}  \omega_{a_1} \ldots \omega_{a_n} \qquad \hbox{ for } a_i \in \{0,1\} $$
where  $\omega_0 = {dt \over t}$ and $\omega_1 = {dt \over 1-t}$, and the integration begins on  the left. One shows that the $\zeta(w)$ are linear combinations of multiple zeta values 
$(\ref{introMZV})$ and that for $n_r \geq 2$, 
$$\zeta(e_1 e_0^{n_1-1} e_1  e_0^{n_2-1} \ldots   e_1e_0^{n_r-1}) = \zeta(n_1,\ldots, n_r)\ .$$

\subsection{The Ihara action}      \label{sectIharaaction}
    Since  $\Or(\opi)$ is the de Rham realization of an  Ind-object in the category $\MT(\Z)$, it inherits  an action of the motivic Galois group
   $$  \UMT \times \opi \To \opi\ . $$
   The action of $\UMT$ on the  element $\ooi\in \opi$  defines a map
$$ \label{gtog1} g\mapsto g(\ooi): \UMT \To \opi\ ,
  $$  and one shows  \cite{DG},  \S5.8, that  the action of  $\UMT$ on $\opi$ factors through a map   
\begin{equation} \label{circonopi}
       \circ : \opi \times \opi  \To  \opi  
\end{equation} 
 which, on the level of formal power series, is given by the following formula
 \begin{eqnarray} \label{Iharaact}
 R\langle \langle e_0, e_1 \rangle \rangle^{\times}  \times R\langle \langle e_0, e_1 \rangle \rangle    &  \To  & R\langle \langle e_0, e_1 \rangle \rangle     \\
F(e_0,e_1) \circ G(e_0, e_1) & = &  G(e_0,  F(e_0, e_1) e_1 F(e_0, e_1)^{-1}) F(e_0, e_1) \nonumber 
\end{eqnarray}
 which was first considered by Y. Ihara. The action $(\ref{circonopi})$ makes $\opi$ into a torsor over $\opi$ for $\circ$. More prosaically, given two invertible  formal power series
 $G,H$, one can  solve $F\circ G = H$ for $F$  recursively by writing equation $(\ref{Iharaact})$ as
 \begin{equation}\label{Frecursive}
 F = G(e_0, Fe_1F^{-1})^{-1} H\end{equation}
  If all the  coefficients in $F$  of words of length $\leq N$ have been  determined, 
  then the coefficients of $F e_1 F^{-1}$, and hence $G(e_0, Fe_1 F^{-1})$ are determined up to length $N+1$.   Equation $(\ref{Frecursive})$ determines the coefficients of $F$ in length $N+1$. A similar
  recurrence based on the number of occurrences of $e_1$ in a  word (the depth) sometimes allows one to write down closed formulae in low depth and in all weights (\S\ref{sectExamples}).

\subsection{Motivic multiple zeta values}

Let   $\dch_B\in \omega_B(\opi^{\mm})(\Q)  $ denote   the Betti image of the straight line path $(\ref{dchdef})$. It defines an element
$\dch_B \in \omega_B(\opi^{\mm})^{\vee}$.
Let $w$ be any word in  $\{e^0,e^1\}$. It defines an element $w \in \Or(\opi) \cong \Q \langle e^0, e^1\rangle$, the de Rham realization of $\Or(\opi^{\mm})$.

\begin{defn} The \emph{motivic multiple zeta value} $\zetam(w)$ is the motivic period 
$$\zetam(w) = [ \Or(\opi^{\mm}), w,  \dch_B]^{\mm} \ .$$
The \emph{algebra of motivic multiple zeta values} $\Ho\subset  \Pe^{\mm}$ is  the graded $\Q$-algebra spanned by the $\zetam(w)$,
 i.e., the image of the map $w\mapsto \zetam(w): \Q\langle e^0, e^1 \rangle \rightarrow \Pe^{\mm}$. 
\end{defn}
Since $\Or(\opi^{\mm})$ has weights $\geq 0$, and is stable under $U_{dR}$, it follows that $\Ho \subset \Pemp$ by definition \ref{defngeomper}.
Thus $\Ho = \bigoplus_{n\geq 0} \Ho_n$ is positively graded, and there is a natural map
\begin{eqnarray} \label{canonicalmaptoH} 
\Q \langle e^0, e^1\rangle  &\To & \Ho \\
w  & \mapsto &  \zetam(w) \ .\nonumber 
\end{eqnarray}
which is a homomorphism for the shuffle product.  The period map  $(\ref{perhom})$  yields
\begin{eqnarray}
\per : \Ho & \To & \R \\
\zetam(w) & \mapsto & \zeta(w) \nonumber 
\end{eqnarray} 
and the periods of motivic multiple zeta values are the usual multiple zeta values.

There is a corresponding notion of unipotent de Rham multiple zeta value. Instead of  $\dch_B$, we now take  a de Rham framing
$\ooi \in \opi(\Q) \subset  \Or(\opi)^{\vee}$. 

\begin{defn} The \emph{unipotent de Rham multiple zeta value} $\zetaa(w)$ is 
$$\zetaa(w) = [ \Or(\opi^{\mm}), w,  \ooi]^{\am} \ .$$
The \emph{algebra of  unipotent de Rham multiple zeta values} $\Ao\subset \Pe^{\am}$ is  the graded $\Q$-algebra spanned by the $\zetaa(w)$,
 i.e., the image of the map $w\mapsto \zetaa(w): \Q\langle e^0, e^1 \rangle \rightarrow \Pe^{\am}$. 
\end{defn}
Since $\Or(\opi^{\mm})$ has non-negative weights,  and because the de Rham image $Z(e_0,e_1)$ of $\dch$   has leading term $1$, we verify  that 
$${}^t c_0 ( \dch_B) = \ooi \ .$$
By   equation $(\ref{PemtoPea})$, we deduce a surjective homomorphism
\begin{eqnarray} \label{HtoA}
\pim:\Ho  & \To \!\!\!\!\!\! \!\! \To &  \Ao  \\
\zetam(w) & \mapsto & \zetaa(w) \nonumber 
 \end{eqnarray} 
The motivic multiple zeta values $\zetam(w)$ were defined in \cite{BMTZ}, and simplified by Deligne \cite{DLetter}.  The unipotent de Rham multiple zeta values $\zetaa(w)$ are equivalent to the `motivic multiple
zeta values' considered  in \cite{GoMT}. 

\begin{rem}  It is important to note that  $\zetam(2) \neq 0$,  whereas  $\zetaa(2)=0$ (\cite{BMTZ}).
\end{rem}

The algebra $\Ao=\bigoplus_{n\geq 0} \Ao_n$ is again positively graded, and is a commutative Hopf algebra by $(\ref{Hopfalgebroid})$.  
We have a commutative diagram

$$
\begin{array}{ccc}
\Ho  &   \To &  \Ao \otimes \Ho   \\
\downarrow  &   &  \downarrow   \\
 \Ao  & \To   & \Ao \otimes \Ao  
\end{array}
$$
Let us write $\A= \Spec(\Ao)$, and $\HH=\Spec(\Ho)$. Then $\A$ is a pro-unipotent  affine group scheme over $\Q$, which embeds in $\HH$ via $(\ref{HtoA})$, and  acts upon it on the left:
\begin{equation} \label{AHtoH}
\A \times \HH \To \HH\ .
\end{equation}

\subsection{Compatibility with the Ihara action}
The fact that the action of the motivic Galois group factors through the Ihara action (\S\ref{sectIharaaction}) can be expressed by the following commutative diagram, where the maps $\A \hookrightarrow \HH \hookrightarrow  \opi$ are induced by $(\ref{HtoA})$, $(\ref{canonicalmaptoH})$:
$$
\begin{array}{ccc}
 \A\,\,\times \, \,\HH  &  \To   & \HH   \\
 \hookdownarrow\, \,\, \quad \,  \,\,\,\hookdownarrow &    & \hookdownarrow    \\
 \opi \times \opi & \To  &  \opi  
\end{array}
$$
and the map along the bottom is the Ihara action $\circ:   \opi \times \opi \rightarrow   \opi$.  
Dually, we have the following commutative diagram \cite{BMTZ}: 
$$
\begin{array}{ccc}
 \Or(\opi)  & \To  &   \Or(\opi)   \otimes \Or(\opi) \\
\downarrow   &    & \downarrow    \\
 \Ho   &  \To   & \Ao \otimes \Ho   
\end{array}
$$
where the map along the top is the  Ihara coaction, which can be effectively replaced  \cite{BrDepth}, with an explicit formula which is due to Goncharov (who proved it for the unipotent de Rham 
periods $\zetaa$, i.e. modulo $\zeta(2)$'s, but in fact gives the correct coaction for $\zetam$ also. See \cite{BrDepth} for  a  direct and very short proof using the Ihara coaction).

\subsection{The motivic Drinfel'd associator} In this section, all $\Hom$'s are in the category of commutative unitary $\Q$-algebras.

\begin{defn} Define the \emph{motivic version of the Drinfel'd associator} by
$$Z^{\mm}(e_0, e_1) = \sum_{w \in \{ e_0, e_1\}^{\times}} \zetam(w) w \qquad \in \qquad    \opi(\Ho) \subset \Ho \langle \langle e_0, e_1 \rangle \rangle$$
Define the \emph{unipotent de Rham version of the Drinfel'd associator} by
$$Z^{\am}(e_0, e_1) = \sum_{w \in \{ e_0, e_1\}^{\times}} \zetaa(w) w \qquad \in \qquad \opi(\Ao) \subset  \Ao \langle \langle e_0, e_1 \rangle \rangle \ .$$
\end{defn}

It is useful to view $Z^{\mm}$ as a  morphism via the following general nonsense.
 For any commutative unitary ring $R$, we have an isomorphism
 $$ \Hom( \Q\langle e_0, e_1\rangle , R) \overset{\sim}{\To} R\langle \langle e_0, e_1 \rangle \rangle\  .$$
Via this isomorphism, we see that $Z^{\mm}$ is simply the image of the canonical  map $(\ref{canonicalmaptoH})$.
Composing with the  canonical map $(\ref{canonicalmaptoH})$ gives a map:
$$\Hom(\Ho, R) \To \Hom(\Q\langle e_0, e_1 \rangle, R)\To  R \langle\langle  e_0, e_1 \rangle \rangle$$
which is simply another way to write $\HH(R) \hookrightarrow \opi(R)$. Setting $R = \Ho$, we can view  the motivic Drinfel'd associator
as the image of the identity map
\begin{equation} \label{idasZ} 
\id_{\Ho} \in \Hom(\Ho, \Ho) \qquad \longrightarrow  \qquad  Z^{\mm} \in \Ho \langle\langle  e_0, e_1 \rangle \rangle \ . 
\end{equation} 
The usual Drinfel'd associator is the image of the element
$$\per \in \Hom(\Ho, \C) \qquad \longrightarrow  \qquad  Z \in \C \langle\langle  e_0, e_1 \rangle \rangle \ .  $$
The  unipotent de Rham  Drinfel'd associator is the image of  the map $(\ref{HtoA})$:
$$ \pim \in \Hom(\Ho, \Ao) \qquad \longrightarrow  \qquad  Z^{\am} \in \Ao \langle\langle  e_0, e_1 \rangle \rangle \ .  $$

\subsection{Decomposition with respect to $\zetam(2)$'s} \label{sectdecompzeta(2)}

\begin{lem} We have
$$\zetam(2) = - {  (\LM)^2 \over 24}$$
\end{lem}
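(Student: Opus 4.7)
The plan is to show that $\zetam(2)$ must be a rational multiple of $(\LM)^2$ on purely motivic grounds, and then pin down the scalar by comparing periods.

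First I would apply the map $\pim$ from \S\ref{sectgeomcase} to $\zetam(2)$. By the commutative diagram preceding the map $(\ref{HtoA})$, this sends $\zetam(w)$ to $\zetaa(w)$, and the remark just after $(\ref{HtoA})$ records the key fact (from \cite{BMTZ}) that $\zetaa(2)=0$. Hence $\zetam(2)$ lies in the kernel of
\[
\pim:\Pemp\To \Pe^{\am}.
\]

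Next I would use the non-canonical decomposition of corollary (just after $(\ref{Pemdecomp})$),
\[
\Pemp \cong \Pe^{\am}\otimes \Q[\LM],
\]
which is compatible with the weight grading and under which $\pim$ is given by sending $\LM\mapsto 0$ (the very last line of \S\ref{sectgeomcase}). Thus the kernel of $\pim$ in weight $2$ is spanned by $\alpha\otimes \LM$ with $\alpha\in \Pe^{\am}_1$ and by $(\LM)^2$. For $\M=\MT(\Z)$ one has $\Pe^{\am}_1=0$, because $\gr^W_{-2}\Lie(U_{dR})=K_1(\Z)\otimes\Q=0$, so there are no unipotent de~Rham periods of weight $1$. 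Therefore $\zetam(2)=c\,(\LM)^2$ for a unique $c\in\Q$.

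Finally, the constant $c$ is determined by applying the period homomorphism $(\ref{perhom})$: one has $\per(\zetam(2))=\zeta(2)=\pi^2/6$ and $\per(\LM)=2\pi i$ by \S\ref{sectLefschetz}, so
\[
\tfrac{\pi^2}{6}=c\,(2\pi i)^2=-4\pi^2 c,
\]
giving $c=-1/24$, as claimed.

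The main obstacle is really the vanishing $\Pe^{\am}_1=0$, i.e.\ that the motivic Galois group $U_{dR}$ of $\MT(\Z)$ has no generators in weight $1$; once this is granted, the decomposition $(\ref{PempDecomp})$ reduces the problem to the essentially trivial period calculation. An alternative phrasing avoiding the Hopf algebra decomposition would be to argue that the sub-motive $M(\zetam(2))\subset \Or(\opi^{\mm})$ generated by $\zetam(2)$ (in the sense of lemma \ref{lemxiperiodofMxi}) is pure of weight $2$ — since it cannot have any weight-$1$ subquotient and any extension of $\Q(0)$ by $\Q(-2)$ in $\MT(\Z)$ splits — hence $M(\zetam(2))\cong \Q(-2)$, forcing $\zetam(2)\in\Q\cdot(\LM)^2$.
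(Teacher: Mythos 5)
Your proof is correct, and the overall route is essentially the paper's: reduce to showing $\zetam(2)\in\Q\cdot(\LM)^2$, then compute the constant with the period map. The one genuine difference is how you justify the first step. The paper cites \cite{BMTZ}, \S 3.2 directly for the fact that $U_{dR}$ acts trivially on $\zetam(2)$, which together with the weight being $2$ forces $\zetam(2)\in\Q\cdot(\LM)^2$. You instead start from the a priori weaker input $\zetaa(2)=0$, i.e.\ $\pim(\zetam(2))=0$, and supply the missing ingredient yourself: the vanishing $\Pe^{\am}_1=0$ (equivalently, $U_{dR}$ has no generator in weight $1$, because $\mathrm{Ext}^1_{\MT(\Z)}(\Q(0),\Q(1))\cong K_1(\Z)\otimes\Q=0$). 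Combined with the decomposition $\Pemp\cong\Pe^{\am}\otimes\Q[\LM]$ of $(\ref{PempDecomp})$, these two facts do recover the trivial $U_{dR}$-coaction in weight $2$, so the arguments merge from there; the virtue of your version is that it makes the $K$-theoretic input explicit rather than leaving it inside the citation. Your alternative via $M(\zetam(2))$ is also sound, with one caveat in the phrasing: for weight reasons the only possible non-split extension has $\Q(0)$ as the \emph{sub}object and $\Q(-2)$ as the quotient, with class in $\mathrm{Ext}^1_{\MT(\Z)}(\Q(-2),\Q(0))\cong\mathrm{Ext}^1_{\MT(\Z)}(\Q(0),\Q(2))=0$; granting this and the absence of a $\Q(-1)$ piece, purity follows, and minimality of $M(\zetam(2))$ (lemma \ref{lemxiperiodofMxi}) then forces $M(\zetam(2))\cong\Q(-2)$.
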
 
\begin{proof} The action of $U_{dR}$ on $\zetam(2)$ is trivial, by \cite{BMTZ}, \S3.2. Since $\zetam(2)$ has weight 2, it is equal to a rational multiple
of $(\LM)^2$. The rational multiple is determined by applying the period map and using Euler's formula $\zeta(2) = \pi^2/6$.  
\end{proof} 
The analogue of Euler's theorem is false for de Rham periods, since  $\zeta^{\dR}(2)=0$.

\begin{lem}  (\cite{BMTZ}, \S2.3) There is a non-canonical isomorphism 
\begin{equation} \label{HoisAotenszeta2}
\Ho \cong \Ao \otimes \Q[\zetam(2)]
\end{equation}
\end{lem}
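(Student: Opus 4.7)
The strategy is to restrict the non-canonical ambient decomposition $\Pemp_{\R} \cong \Pe^{\am} \otimes \Q[(\LM)^2]$ supplied by Corollary~\ref{corRealFrob} to the subalgebra $\Ho$, matching the two tensor factors with $\Ao$ and $\Q[\zetam(2)]$ respectively.

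The first step is to place $\Ho$ inside the real-Frobenius invariants. Since the straight line path $\dch$ is a real topological path, $c(\dch_B) = \dch_B$, so $\zetam(w) = [\Or(\opi^{\mm}), w, \dch_B]^{\mm}$ is fixed by $c$ for every word $w$; hence $\Ho \subseteq \Pemp_{\R}$. The preceding lemma identifies $\zetam(2)$ with $-(\LM)^2/24$, so $\Q[\zetam(2)] = \Q[(\LM)^2]$ sits inside both $\Ho$ and the ambient decomposition. The surjection $\pim: \Ho \to \Ao$ from $(\ref{HtoA})$ kills $\zetam(2)$.

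The second step is to produce a ring-theoretic section $s : \Ao \to \Ho$ of $\pim$. By Proposition~\ref{propexistsisom} I would choose $s' \in \Isom(\omega_B, \omega_{dR})(\Q)$; its transpose gives a ring homomorphism $\Pe^{\dR} \to \Pe^{\mm}$, which descends, via the identification of remark~\ref{remPa0}, to a section $s: \Pe^{\am} \to \Pemp$ of $\pim$. Since $s$ arises from a morphism of fiber functors on the entire category $\MT(\Z)$, and $\Ho, \Ao$ are by construction the motivic, respectively unipotent de Rham, periods of the \emph{same} Ind-object $\Or(\opi^{\mm})$, the section restricts to a ring homomorphism $s: \Ao \to \Ho$.

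The third step assembles the isomorphism
$$\Phi : \Ao \otimes \Q[\zetam(2)] \To \Ho,\qquad a\otimes p(\zetam(2)) \mapsto s(a)\cdot p(\zetam(2)).$$
Injectivity is inherited from the ambient decomposition, which is a free $\Q[(\LM)^2]$-module. For surjectivity I would argue by induction on weight: given $\xi\in \Ho_n$, set $a_0 = \pim(\xi) \in \Ao_n$; then $\xi - s(a_0) \in \Ho$ is divisible by $\zetam(2)$ in the ambient decomposition, and one checks that the quotient lies in $\Ho_{n-2}$ by using the $G_{dR}$-stability of $\Ho$ together with the fact that $\zetam(2)$ is $U_{dR}$-invariant of weight $2$ and a non-zerodivisor. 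Iterating gives $\xi \in \Phi(\Ao \otimes \Q[\zetam(2)])$.

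The principal obstacle is step two, namely arranging for $s$ to land inside $\Ho$ rather than the full $\Pemp$: this requires choosing the rational fiber functor isomorphism compatibly with the sub-Tannakian category generated by $\Or(\opi^{\mm})$, which is automatic since $s'$ is defined on all of $\MT(\Z)$. The closely related subtlety in step three, that $\Ho$ is closed under division by $\zetam(2)$ when the quotient already lies in the ambient space, likewise reduces to the $G_{dR}$-equivariance and the grading.
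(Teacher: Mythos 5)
Your overall strategy — restricting the ambient decomposition $\Pemp_{\R} \cong \Pe^{\am}\otimes\Q[(\LM)^2]$ to $\Ho$ — is the same as the paper's, and your step one is exactly right. The problem is in step two, and it propagates into step three.

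Step two asserts that the section $s:\Pe^{\am}\to\Pemp$ obtained from a rational isomorphism $s'\in\Isom(\omega_B,\omega_{dR})(\Q)$ restricts to a map $\Ao\to\Ho$, on the grounds that $s$ comes from a morphism of fiber functors and $\Ho,\Ao$ are periods ``of the same Ind-object.'' This reasoning does not hold up. Both $\Ho$ and $\Ao$ depend on specific framings, not only on the motive: $\Ho$ is the span of $[\Or(\opi^{\mm}),w,\dch_B]^{\mm}$ with the \emph{fixed Betti framing} $\dch_B$, while $\Ao$ is the span of $[\Or(\opi^{\mm}),w,\ooi]^{\am}$ with the \emph{fixed de Rham framing} $\ooi$. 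Applying $s$ to $\zetaa(w)$ yields $[\Or(\opi^{\mm}),w,(s')^t\ooi]^{\mm}$, and for a general rational choice of $s'$ the element $(s')^t(\ooi)\in\omega_B(\opi^{\mm})(\Q)$ is \emph{not} equal to $\dch_B$; the only a priori constraint is that both agree after projection to $W_0$ (both hit $\ooi$ under $c_0$). So $s(\zetaa(w))$ is a motivic period of $\Or(\opi^{\mm})$ but not visibly a member of the subspace $\Ho$. In fact $s(\Ao)\subset\Ho$ is a \emph{consequence} of the lemma being proved (it is the restriction of the inverse isomorphism to the first tensor factor), so invoking it at this point is circular. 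The same gap resurfaces in step three: the displayed appeal to $G_{dR}$-stability, $U_{dR}$-invariance of $\zetam(2)$, and the non-zerodivisor property does not by itself show that the quotient $y$ with $\xi - s(a_0) = \zetam(2)y$ lies in $\Ho$.

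The paper avoids this by running the argument in the forward direction: the decomposition map $\phi=(\id\otimes\pi_{\LM})\circ\Delta_{\am,\mm}$ sends $\Ho$ \emph{injectively} into $\Pe^{\am}\otimes\Q[\zetam(2)]$, and the decisive point is that $\phi|_{\Ho}$ is compatible with the $\Pe^{\am}$-coaction, which (because $\Delta_{\am,\mm}(\Ho)\subset\Ao\otimes\Ho$) forces the image to land inside $\Ao\otimes\Q[\zetam(2)]$. Once the image is known to be a graded subalgebra of $\Ao\otimes\Q[\zetam(2)]$ containing $\zetam(2)$ and surjecting onto $\Ao$ under the augmentation (since $\pim(\Ho)=\Ao$), a weight induction gives equality. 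If you want to keep your section-based phrasing you must first establish the coaction compatibility and deduce that $\phi(\Ho)\subset\Ao\otimes\Q[\zetam(2)]$; only then does it follow that $s(\Ao)\subset\Ho$, and your induction goes through.
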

\begin{proof} Since the motive $\Or(\opi^{\mm})$ has weights $\geq 0$,  and is stable under $U_{dR}$, $\Ho$ is contained in $\Pemp$.  
Futhermore,  since  the path $\dch$ is   invariant under  complex conjugation
we deduce that $\Ho \subset \Pemp_{\R}$. By  corollary \ref{corRealFrob},  there is an injective map
\begin{equation} \label{Hodecomp}
\Ho  \To \Pe^{\am} \otimes \Q[(\LM)^2] \cong \Pe^{\am} \otimes \Q[\zetam(2)]  \ .
\end{equation}
which is compatible with the $\Pe^{\am}$-coaction.  
Since, by definition,  $\pim(\Ho) = \Ao$, and because  $\zetam(2) \in \Ho$,  the image of  $(\ref{Hodecomp})$
is equal to   $\Ao \otimes \Q[\zetam(2)]$.  
\end{proof}
A choice of  decomposition  $(\ref{HoisAotenszeta2})$ defines  a homomorphism  $\Zo^{\mm} : \Ao \rightarrow \Ho$, and via the augmentation on $\Pe^{\am}$,  a homomorphism $\gamma^{\mm}: \Ho \rightarrow \Q[\zetam(2)]$.

\begin{cor} There exist  elements  $\gamma^{\mm} \in \HH(\Q[\zetam(2)])$, and $\Zo^{\mm}\in \A(\Ho)$ such that 
 \begin{equation} \label{ZisZocircgamma} Z^{\mm}= \Zo^{\mm} \circ \gamma^{\mm} \ .
 \end{equation} 
 \end{cor}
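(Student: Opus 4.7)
The strategy is to unwind the assertion $Z^{\mm}=\Zo^{\mm}\circ \gamma^{\mm}$ back into the statement that the isomorphism $(\ref{HoisAotenszeta2})$ is compatible with the $\Ao$-coaction, and then exhibit both sides as the same algebra homomorphism $\Ho\to \Ho$ via the identification $(\ref{idasZ})$.

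First I would set up the relevant morphisms. The decomposition $(\ref{HoisAotenszeta2})$ is a choice of section of $\pim:\Ho\to \Ao$, which by dualising defines the algebra homomorphism
\[ \Zo^{\mm}: \Ao \To \Ho, \]
corresponding to a point $\Zo^{\mm}\in \A(\Ho)$. The augmentation $\varepsilon:\Ao\to \Q$ together with this decomposition provides the other projection
\[ \gamma^{\mm}: \Ho\cong \Ao\otimes\Q[\zetam(2)] \xrightarrow{\varepsilon\otimes \id} \Q[\zetam(2)], \]
which is an algebra homomorphism and hence a point $\gamma^{\mm}\in\HH(\Q[\zetam(2)])\subset \HH(\Ho)$.

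Next I would observe that $(\ref{HoisAotenszeta2})$ is in fact an isomorphism of $\Ao$-comodules. Here the right-hand side is given the tensor coaction, with $\Q[\zetam(2)]$ carrying the trivial coaction; this is legitimate because $\zetam(2)\in\Ho$ is $U_{dR}$-invariant, as recorded in the preceding lemma (it is proportional to $(\LM)^2$, on which $\Delta_{\am,\mm}$ acts trivially by $(\ref{LMcoaction})$). Thus for any $h\in\Ho$, writing $\Delta_{\Ao,\Ho}(h)=\sum h^{(1)}\otimes h^{(2)}$ in Sweedler notation,
\[ h=\sum \Zo^{\mm}\bigl(h^{(1)}\bigr)\,\gamma^{\mm}\bigl(h^{(2)}\bigr) \]
because the composite $\mu\circ(\Zo^{\mm}\otimes\gamma^{\mm})\circ \Delta_{\Ao,\Ho}$ collapses to the identity once the augmentation is absorbed into $\Zo^{\mm}$ via the counit axiom on $\Ao$.

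Finally, I would translate this equality into the Ihara picture. Under the identification of $\opi(\Ho)$ with $\Hom(\Q\langle e_0,e_1\rangle,\Ho)$, the group law $\circ$ on $\A\times\HH\to \HH$ is, by the compatibility diagram between the motivic Galois action and the Ihara action, exactly the convolution product $\mu\circ(\Zo^{\mm}\otimes\gamma^{\mm})\circ\Delta_{\Ao,\Ho}$ applied to elements of $\Ho$. Hence $\Zo^{\mm}\circ \gamma^{\mm}\in \HH(\Ho)$ corresponds to the identity homomorphism $\id_{\Ho}$, which by $(\ref{idasZ})$ is precisely $Z^{\mm}$.

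The main obstacle is not computational but bookkeeping: one must confirm that the Ihara composition of the two $\Ho$-valued points, as inherited from $\circ:\opi\times\opi\to\opi$ and restricted via $\A\hookrightarrow\opi$ and $\HH\hookrightarrow\opi$, really does coincide with the Hopf-algebra convolution coming from $\Delta_{\Ao,\Ho}$. Once this identification is in place, the corollary is an immediate consequence of the comodule structure on the decomposition $(\ref{HoisAotenszeta2})$.
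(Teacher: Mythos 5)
Your proof is correct and follows essentially the same route as the paper's: the paper likewise observes that the composite $\Ho\to\Ao\otimes\Q[\zetam(2)]\to\Ho$ is the identity, rewrites this via the counit axiom as $\id_{\Ho}=\mu(\Zo^{\mm}\otimes\gamma^{\mm})\Delta$, and invokes $(\ref{idasZ})$ together with the compatibility of the $\A$-action on $\HH$ with the Ihara action to conclude. You simply spell out in more detail the two points that the paper leaves implicit, namely that $(\ref{HoisAotenszeta2})$ is an isomorphism of $\Ao$-comodules (using that $\zetam(2)$ is $U_{dR}$-invariant) and that the dual of the coaction $\Ho\to\Ao\otimes\Ho$ computes the Ihara group law on $R$-points as a convolution product.
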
 
\begin{proof}The map $\Ho \rightarrow \Ao \otimes \Q[\zetam(2)] \rightarrow \Ho$ is the identity, where the second map is 
$\mu ( \Zo^{\mm} \otimes \id )$ and $\mu$ denotes multiplication. This implies that 
$\id_{\Ho} = \mu ( \Zo^{\mm} \otimes \gamma^{\mm}) \Delta$, 
i.e.,  $\id_{\Ho}$ is the convolution product of $\Zo^{\mm}$ and $\gamma^{\mm}$. This is exactly
 $(\ref{ZisZocircgamma})$, by $(\ref{idasZ})$. 
\end{proof} 

 \newpage
 \section{A class of multiple zeta values (elementary version)}  \label{sectAclassofMZVs}

The class of single-valued multiple zeta values is constructed in this section in a completely `elementary' way, i.e., with no reference to motivic periods. 

 \subsection{Deligne's canonical associator}
 Consider the continuous antilinear map
 \begin{eqnarray}
 \sigma : \C\langle \langle e_0, e_1 \rangle \rangle  & \To &  \C\langle \langle e_0, e_1 \rangle \rangle \\
 \sigma (e_i) & \mapsto & -e_i  \nonumber 
 \end{eqnarray} 
which acts by complex conjugation on the coefficients of words.  Let $Z\in \R\langle \langle e_0, e_1 \rangle \rangle$ denote the Drinfeld associator $(\ref{DrinfeldAssoc})$.

 \begin{lem} There  exists a unique element  $W \in \R\langle \langle e_0, e_1 \rangle \rangle$ such that 
 \begin{equation} \label{Wdef}
   W \circ {}^\sigma \! Z  = Z\ .
 \end{equation} 
 \end{lem}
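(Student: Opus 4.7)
The plan is to exhibit $W$ as the unique solution of an Ihara-action equation by invoking the torsor structure of $\opi$ under $\circ$, or equivalently by running the recursion $(\ref{Frecursive})$.

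First I would verify that ${}^\sigma Z$ lies in $\opi(\R)$. The map $\sigma$ is antilinear, multiplicative on coefficients, and satisfies $\sigma(e_i)=-e_i$, so it commutes with the completed coproduct $\Delta$: on the generators, $\sigma(\Delta e_i)=\sigma(1\otimes e_i + e_i\otimes 1) = -(1\otimes e_i+e_i\otimes 1) = \Delta(-e_i)=\Delta(\sigma e_i)$, and this extends to all of $\C\langle\!\langle e_0,e_1\rangle\!\rangle$ by multiplicativity. Hence $\sigma$ preserves the locus of group-like elements, so ${}^\sigma Z$ is group-like. Since the coefficients of ${}^\sigma Z$ are $\pm$ the (real) coefficients of $Z$, in fact ${}^\sigma Z\in\opi(\R)$, as required.

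Next I would invoke the torsor property recorded after $(\ref{circonopi})$: $\opi$ is a torsor over itself under $\circ$, so for any $G,H\in\opi(\R)$ there is a unique $F\in\opi(\R)$ with $F\circ G=H$. Applied to $G={}^\sigma Z$ and $H=Z$, this produces a unique $W\in\opi(\R)\subset\R\langle\!\langle e_0,e_1\rangle\!\rangle$ satisfying $(\ref{Wdef})$. Uniqueness inside the larger ambient power series ring (not just among group-like elements) follows from the explicit recursion $(\ref{Frecursive})$: writing $W={}^\sigma Z(e_0,We_1W^{-1})^{-1}Z$, the coefficients of $W$ in words of length $\leq N$ determine the coefficients of ${}^\sigma Z(e_0,We_1W^{-1})^{-1}Z$ in words of length $\leq N+1$, hence determine the coefficient of $W$ in each new word uniquely.

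The recursion also gives the real structure: both $Z$ and ${}^\sigma Z$ have real coefficients, and Ihara's formula $(\ref{Iharaact})$ together with formal inversion involves only $\Q$-linear combinations and products of the coefficients of its operands, so every level of the recursion outputs real numbers, confirming $W\in\R\langle\!\langle e_0,e_1\rangle\!\rangle$.

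There is no serious obstacle. The only small point worth verifying is the compatibility $\sigma\Delta=\Delta\sigma$, which guarantees that ${}^\sigma Z$ is itself group-like, so that the torsor structure of $\opi$ can be invoked directly; without this, one must fall back on the purely formal recursion, which still works but does not place $W$ inside $\opi$.
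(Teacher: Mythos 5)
Your proposal is correct and takes essentially the same approach as the paper: the paper's proof simply cites the transitivity and faithfulness of the Ihara action together with the recursion $(\ref{Frecursive})$, which is exactly what you spell out. Your extra verification that $\sigma$ commutes with $\Delta$ (so that ${}^\sigma Z$ is group-like and the torsor argument applies cleanly) is a detail the paper leaves implicit, but it is a sensible thing to record and does not constitute a departure from the paper's argument.
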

\begin{proof}  By \S\ref{sectIharaaction}, the Ihara action is transitive and faithful. The equation $(\ref{Wdef})$ can be solved
recursively using $(\ref{Frecursive})$ and the  comments which follow.
\end{proof}
  The  series  $W$ is Deligne's associator. We  show in \S\ref{sectsinglevaluedmotivicMZVs} that it is indeed an associator. 
  
 \subsection{Single-valued multiple polylogarithms}  \label{sectSVmultpoly} We briefly recall the construction given in \cite{BSVMP}. See \S\ref{SVMPrevisited} for a more conceptual derivation.  The conventions for iterated integrals will be switched relative to \cite{BSVMP}     in order
 to remain compatible with the above. The generating series of multiple polylogarithms is
$$L_{e_0,e_1}(z) = \sum_{w \in \{e_0,e_1\}^{\times}} L_w(z) w\ , $$
and is defined to be the unique solution to the K-Z equation
$$ {d\over dz}  L_{e_0, e_1}(z) =  L(z)\Big( {e_0 \over z} + {e_1 \over 1-z}\Big)$$ 
which is  equal to $ h(z) \exp( e_0 \log(z))$ near the origin,   where $h(z)$ is a holomorphic function at $0$, where it takes the value $1$.

\begin{defn} There is a unique  element $e_1' \in \opi(\R)= \R\langle \langle e_0, e_1 \rangle \rangle$  which satisfies the  fixed-point equation:
 \begin{equation} \label{olddefe1}
   Z(-e_0,-e_1') e_1' Z(-e_0, -e_1')^{-1} = Z(e_0,e_1) e_1 Z(e_0,e_1)^{-1}\ .
  \end{equation} 
 \end{defn} 
One can easily show that $(\ref{olddefe1})$ can be solved recursively in the weight, and so $e_1'$ does indeed exist and is unique \cite{BSVMP}.
 \\
 
 The generating series of single-valued multiple polylogarithms was defined by
  \begin{equation} \label{Lodef}
  \Lo(z) =   \widetilde{L}_{e_0,e_1'}(\overline{z}) L_{e_0,e_1}(z) \ , 
  \end{equation}
  where \,\, $\widetilde{}$\,\,  denotes reversal of words. Since the antipode in the Hopf algebra $\C\langle \langle e_0, e_1 \rangle \rangle$ is given by 
  $e_{i_1}\ldots e_{i_n} \mapsto (-1)^n e_{i_n}\ldots e_{i_1}$, and since $L(z)$ is group-like, we have 
  $$  L_{e_0,e_1}(z)^{-1} = \widetilde{L}_{-e_0,-e_1}(z) $$
  and therefore we can rewrite $(\ref{Lodef})$ as
$$ \Lo(z) =  (L_{-e_0,-e_1'}(\overline{z}))^{-1}  L_{e_0,e_1}(z) \ ,$$
 In \cite{BSVMP} it was shown that the coefficients $\Lo_w(z)$  of $w$ in the generating series  $\Lo(z)$ are single-valued functions of $z$, are linearly independent over $\C$, 
 and satisfy the same shuffle and differential equations (with respect to $\partial \over \partial z$) as $L_w(z)$. The last  two properties are obvious from $(\ref{Lodef})$. Their values at one are given by 
 \begin{equation} \label{Lo1} 
  \Lo(1) =  (Z(-e_0,-e_1'))^{-1}  Z(e_0,e_1)\ .
  \end{equation}

 \subsection{Values of single-valued multiple polylogarithms at 1}  The values of single-valued multiple polylogarithms at 1 are exactly the coefficients of Deligne's associator.
 \begin{lem} Equation  $(\ref{olddefe1})$ has the unique solution  
 $e_1 ' = W e_1 W^{-1}$. \end{lem}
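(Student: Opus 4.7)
The plan is to unwind the defining equation $W \circ {}^{\sigma}\!Z = Z$ using the explicit formula for the Ihara action, and then to verify by a direct substitution that $e_1' = We_1W^{-1}$ satisfies $(\ref{olddefe1})$. Uniqueness of the solution was already asserted in the definition, so the only content is exhibiting this $e_1'$.

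First, I would observe that since the coefficients of $Z$ are real multiple zeta values, the antilinear map $\sigma$ acts on $Z$ purely through the substitution $e_i \mapsto -e_i$, so ${}^{\sigma}Z = Z(-e_0, -e_1)$. Then I would apply formula $(\ref{Iharaact})$ with $F = W$ and $G(e_0,e_1) = Z(-e_0,-e_1)$. Substituting $e_1 \mapsto We_1W^{-1}$ in $G$ produces $Z(-e_0, -We_1W^{-1})$, so the Ihara action gives
\begin{equation*}
W \circ {}^{\sigma}Z = Z(-e_0, -We_1W^{-1})\, W\ .
\end{equation*}
Combined with $W \circ {}^{\sigma}Z = Z$, this yields the key identity
\begin{equation*}
Z(-e_0, -We_1W^{-1}) = Z(e_0, e_1)\, W^{-1}\ .
\end{equation*}

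Next, I would set $e_1' := We_1W^{-1}$ and plug into the left-hand side of $(\ref{olddefe1})$. Denote $A = Z(-e_0,-e_1')$; by the identity just derived, $A = Z(e_0,e_1)\,W^{-1}$. Then
\begin{equation*}
A\, e_1'\, A^{-1} = \bigl(Z(e_0,e_1)W^{-1}\bigr)\bigl(We_1W^{-1}\bigr)\bigl(W Z(e_0,e_1)^{-1}\bigr) = Z(e_0,e_1)\, e_1\, Z(e_0,e_1)^{-1}\ ,
\end{equation*}
which is exactly the right-hand side of $(\ref{olddefe1})$. Uniqueness of $e_1'$ was noted immediately below the statement of $(\ref{olddefe1})$ (by the same recursive argument as for the Ihara action), so this $e_1'$ is the solution.

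The main point to be careful about is the bookkeeping of signs and of what substitution is taking place inside $Z(-e_0,-e_1)$ when one applies the Ihara recipe: the outer $-$ signs in $-e_0, -e_1$ must be kept and only the occurrences of the letter $e_1$ are replaced by $We_1W^{-1}$ before the overall sign is reinstated. Once this is correctly set up, the verification is a purely formal manipulation and the only non-trivial ingredient is the identity $W \circ {}^{\sigma}Z = Z$ together with the explicit shape $(\ref{Iharaact})$ of the Ihara action.
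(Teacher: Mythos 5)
Your computation is precisely the paper's proof: the Ihara formula applied to $W\circ{}^{\sigma}\!Z = Z$ gives $Z(-e_0,-We_1W^{-1}) = ZW^{-1}$, and conjugating $e_1'=We_1W^{-1}$ by this element telescopes directly to $Ze_1Z^{-1}$. The only cosmetic difference is that the paper also supplies its own short uniqueness argument (any solution $e_1'=Ae_1A^{-1}$ forces $A\circ{}^{\sigma}\!Z=Z$, hence $A=W$ by $(\ref{Wdef})$), whereas you appeal to the uniqueness already asserted just after $(\ref{olddefe1})$; both are legitimate.
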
 
 \begin{proof} By the formula for the Ihara action $(\ref{Iharaact})$, we have
  \begin{equation} \label{WcircZproof}
 W \circ {}^{\sigma}\!Z(e_0,e_1) =  {}^{\sigma}\!Z(e_0,W e_1 W^{-1})  W
 \end{equation}
Let $e_1' = W e_1 W^{-1}$, and write $Z' = Z(e_0, e_1')$, and ${}^{\sigma}\!Z' = Z(-e_0, -e_1')$. We have
\begin{equation} \label{Wasmult}
{}^{\sigma}\! Z'  \overset{(\ref{WcircZproof})}{=} (W \circ {}\!^{\sigma}\!Z)  W^{-1} \overset{(\ref{Wdef})}{=} Z  W^{-1}
\end{equation}
which implies that
$ {}^{\sigma}\! Z' e_1'\, {}^{\sigma}\! (Z')^{-1} =  Z W^{-1}  (W  e_1 W^{-1})  W Z^{-1} = Z e_1 Z^{-1}$.

For the uniqueness, any solution to $(\ref{olddefe1})$ is of the form $e_1'= A e_1 A^{-1}$ for some series $A$ with leading coefficient 1, and 
$(\ref{olddefe1})$ is just $(A \circ {}^{\sigma}\!Z ) e_1 (A \circ {}^{\sigma}\!Z ) ^{-1}= Ze_1 Z^{-1}$. This readily implies that 
 $A\circ {}^{\sigma}\!Z=Z$ and so $A=W$ by $(\ref{Wdef}).$
\end{proof}
 By equation $(\ref{Lo1})$, $\Lo(1)$ is $({}^{\sigma} \!Z')^{-1}  Z$, which  is exactly $W$ by  $(\ref{Wasmult})$.

\begin{cor} \label{corLo1isW} $\Lo(1) =W.$ 
\end{cor}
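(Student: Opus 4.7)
The plan is to combine the formula for $\Lo(1)$ given in equation $(\ref{Lo1})$ with the identification of $e_1'$ already established in the preceding lemma, so that the corollary essentially reduces to a two-line algebraic manipulation inside $\R\langle\langle e_0,e_1\rangle\rangle$.

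First I would recall that by $(\ref{Lo1})$ we have
\[
\Lo(1) \;=\; \bigl(Z(-e_0,-e_1')\bigr)^{-1} Z(e_0,e_1) \;=\; ({}^\sigma Z')^{-1}\, Z,
\]
using the shorthand $Z' = Z(e_0,e_1')$ and ${}^\sigma Z' = Z(-e_0,-e_1')$ from the proof of the previous lemma. So the task is exactly to evaluate $({}^\sigma Z')^{-1} Z$.

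The key input is the identity $(\ref{Wasmult})$ already established, namely
\[
{}^\sigma Z' \;=\; Z\, W^{-1},
\]
which was obtained by applying the Ihara action formula $(\ref{Iharaact})$ to $W \circ {}^\sigma Z$ with $e_1' = We_1W^{-1}$ and then invoking the defining equation $(\ref{Wdef})$ for $W$. Inverting this identity gives $({}^\sigma Z')^{-1} = W Z^{-1}$, and substituting into the formula for $\Lo(1)$ above produces
\[
\Lo(1) \;=\; W Z^{-1}\, Z \;=\; W,
\]
which is the desired identity.

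There is essentially no obstacle here: the corollary is a direct consequence of the preceding lemma, which did the real work by identifying $e_1'$ explicitly as $W e_1 W^{-1}$ and by extracting the auxiliary relation $(\ref{Wasmult})$. The only thing to be careful about is keeping track of the $\sigma$-conjugation and the order of multiplication in the non-commutative algebra $\R\langle\langle e_0,e_1\rangle\rangle$, since reversing $W$ and $Z^{-1}$ would give a wrong answer.
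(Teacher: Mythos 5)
Your proposal is correct and matches the paper's own argument exactly: the paper also reads $\Lo(1) = ({}^\sigma Z')^{-1} Z$ from $(\ref{Lo1})$ and then invokes $(\ref{Wasmult})$, i.e.\ ${}^\sigma Z' = Z W^{-1}$, to conclude $\Lo(1) = W$. You have merely spelled out the one-line inversion step, which is a fine clarification but not a different route.
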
 

 \section{The single-valued  associator (motivic version)}

\subsection{Single-valued motivic multiple zeta values}\label{sectsinglevaluedmotivicMZVs}
Recall that $Z^{\mm} \in \HH(\Ho)$ is the motivic Drinfel'd associator $(\ref{idasZ})$, and that the action of $\sigma$ (definition  \ref{defsigma}) on the ring $\Ho$ of  coefficients 
of $\HH(\Ho)$ is denoted by ${}^{\sigma}$.

\begin{lem} There exists a unique $W^{\mm} \in \A(\Ho)$ such that
\begin{equation} \label{Wmotdef}
   W^{\mm} \circ {}^\sigma \! Z^{\mm}  = Z^{\mm}\ .
 \end{equation} 
 \end{lem}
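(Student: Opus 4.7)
Equation $(\ref{Wmotdef})$ is the motivic-fundamental-group analogue of the defining identity $\svm \circ \sigma = \id$ of the single-valued motivic period from \S\ref{sectSVMP}, so the natural candidate for $W^{\mm}$ is $\svm$ transported along the Ihara action of $U_{dR}$ on $\opi^{\mm}$.

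Uniqueness is immediate: since $\opi$ is a torsor under the Ihara action, the equation $F \circ {}^{\sigma}Z^{\mm} = Z^{\mm}$ has at most one solution $F \in \opi(\Ho)$, computable weight-by-weight via the recursion $(\ref{Frecursive})$, and hence at most one solution in the subgroup $\A(\Ho) \subset \opi(\Ho)$.

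For existence, I would take $W^{\mm}$ to be the image of $\svm \in U_{dR}(\Pe^{\mm})$ under the Ihara action, restricted to $\A$ via the compatibility between the motivic Galois action and the Ihara action recorded in the diagram just after $(\ref{AHtoH})$. Explicitly, $\svm$ induces a homomorphism $\Ao \to \Pe^{\mm}$; to see that its image lies in $\Ho$, so that it defines a point of $\A(\Ho)$, one uses that $\Or(\opi^{\mm}) \in \MT(\Z)$ has non-negative weights and is $U_{dR}$-stable, whence $\Ho \subset \Pemp$ is itself $U_{dR}$-stable and is preserved by $\svm$ (which factors through $U_{dR}(\Pemp)$, since $\svm(\LDR)=1$). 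To verify $(\ref{Wmotdef})$, I would push the identity $\svm \circ \sigma = \id$ in $G_{dR}(\Pe^{\mm})$ forward along the $G_{dR}$-equivariant map $\Spec \Pe^{\mm} \to \opi^{\mm}$ dual to the inclusion $\Or(\opi^{\mm}) \hookrightarrow \Pe^{\mm}$: this sends $\id$ and $\sigma$ to $Z^{\mm}$ and ${}^{\sigma}Z^{\mm}$ respectively, and converts the $U_{dR}$-action into the Ihara action, yielding precisely $(\ref{Wmotdef})$.

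The main technical obstacle is this descent step, namely verifying that $\svm$ restricted to $\Ao$ lands in $\Ho$ and not merely $\Pemp$; it requires carefully matching the weight conventions in $\pim$, in $\sigma = \tau(-1)c$, and in the Ihara action. Alternatively, one may bypass it by first solving $(\ref{Wmotdef})$ in $\opi(\Ho)$ recursively via $(\ref{Frecursive})$ — producing coefficients that are polynomials in motivic multiple zeta values — and then identifying the resulting element with $\svm|_{\Ao}$ via uniqueness in $\opi(\Ho)$, which automatically places the solution in $\A(\Ho)$ \emph{a posteriori}.
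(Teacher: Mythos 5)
Your uniqueness argument is fine, and the conceptual identification of $W^{\mm}$ with the restriction of $\sv^{\mm}$ is the right intuition (it is indeed how the paper later computes $\Ho^{\sv}$ in lemma \ref{lemHsvisAsv}). But the existence step is left with a real gap, which you correctly flag and then do not close. The difficulty is exactly what you name: showing that $\sv^{\mm}$, restricted to $\Ao$, produces coefficients in $\Ho$ and moreover that the resulting power series lies in the subscheme $\A \subset \opi$, not just in $\opi$. Your "bypass" (solve $(\ref{Wmotdef})$ in $\opi(\Ho)$ by the recursion $(\ref{Frecursive})$) does produce coefficients in $\Ho$, since the recursion only manipulates the coefficients of $Z^{\mm}$ and ${}^{\sigma}Z^{\mm}$; but it does not, by itself, place the solution in $\A(\Ho)$. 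Nor does "identifying with $\sv^{\mm}|_{\Ao}$" help, since you would first need to know $\sv^{\mm}|_{\Ao}$ lands in $\opi(\Ho)$ — the very thing at issue — and then that the map $U_{dR} \to \opi$ factors through $\A$, which is the main theorem $(\ref{Aoisom})$ of \cite{BMTZ}, a far heavier input than this lemma requires.

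The paper's proof is more economical and avoids $\sv^{\mm}$ entirely: it invokes the decomposition $Z^{\mm} = \Zo^{\mm} \circ \gamma^{\mm}$ from $(\ref{ZisZocircgamma})$, with $\Zo^{\mm} \in \A(\Ho)$ and $\gamma^{\mm} \in \HH(\Q[\zetam(2)])$, and simply sets $W^{\mm} := \Zo^{\mm} \circ ({}^{\sigma}\!\Zo^{\mm})^{\circ -1}$. This visibly lies in $\A(\Ho)$ because $\A(\Ho)$ is a group under $\circ$ and $\sigma$ preserves $\Ho$. The verification of $(\ref{Wmotdef})$ is then a one-line computation using ${}^{\sigma}\gamma^{\mm} = \gamma^{\mm}$, which holds because $\sigma(\LM) = \LM$ and hence $\sigma$ fixes $\zetam(2)$. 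That last observation is precisely the missing ingredient in your bypass: $Z^{\mm}$ and ${}^{\sigma}Z^{\mm}$ project to the same point of $\HH/\A \cong \Spec\Q[\zetam(2)]$, so the unique $W^{\mm} \in \opi(\Ho)$ solving $(\ref{Wmotdef})$ automatically lies in $\A(\Ho)$. If you add this observation, your recursive argument closes; as written it does not.
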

\begin{proof}    Using a decomposition $Z^{\mm}=\Zo^{\mm} \circ \gamma^{\mm} $   $(\ref{ZisZocircgamma})$,  set $W^{\mm} = \Zo^{\mm} \circ  ({}^\sigma\! \Zo^{\mm})^{\circ -1}$,
  where the inversion and multiplication take place in the group $\A(\Ho)$. Since $\sigma$ acts trivially on the coefficients of $\gamma^{\mm}$, it is independent of the chosen decomposition: replacing $(\Zo^{\mm}, \gamma^{\mm})$ with $(\Zo^{\mm} \circ h , h^{\circ -1} \circ \gamma^{\mm})$, where ${}^{\sigma}\!h=h$,  gives rise to the same element. 
  
  Then $W^{\mm}\circ {}^\sigma\! \Zo^{\mm}=\Zo^{\mm}$,
  which implies $(\ref{Wmotdef})$.
\end{proof}

Via $\A(\Ho) = \Hom( \Ao, \Ho)$, we view $W^{\mm}$ as an algebra morphism
$$W^{\mm} : \Ao \To \Ho$$
Composing with $\Q\langle e^0, e^1 \rangle \rightarrow \Ho \rightarrow \Ao$ gives a map we denote by  $W^{\mm}:\Q\langle e^0, e^1 \rangle \rightarrow \Ho$.

\begin{defn}For every word $w \in \{e^0, e^1\}$, define  the single-valued motivic multiple zeta value to be the  image of $w$ under the map $W^{\mm}$. Denote it by
$$\zeta_{\sv}^{\mm}(w)\in \Ho\ .$$
 Let $\Ho^{\sv}\subset \Ho$ be the algebra spanned by the  $\zeta_{\sv}^{\mm}(w)$.
\end{defn} 
The fact that the map $\Q\langle e^0, e^1\rangle \rightarrow \Ho^{\sv}$ factors through $\Ao$ means  the following.
\begin{cor} The elements $\zetam_{\sv}(w)$  satisfy all the  motivic relations between motivic multiple zeta values, together with the relation
$$\zetam_{\sv}(2)=0\ .$$ 
\end{cor}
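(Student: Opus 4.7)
The proof is a direct consequence of how the map $W^{\mm}:\Q\langle e^0,e^1\rangle \to \Ho$ is constructed. By definition, it factors as the composition
\[
\Q\langle e^0,e^1\rangle \;\xrightarrow{w\mapsto \zetam(w)}\; \Ho \;\xrightarrow{\pim}\; \Ao \;\xrightarrow{W^{\mm}}\; \Ho,
\]
where the first two maps are homomorphisms and the last one is the algebra morphism $W^{\mm}:\Ao\to\Ho$ obtained from the element $W^{\mm}\in \A(\Ho)=\Hom(\Ao,\Ho)$. So the plan is simply to unwind what this factorization says.

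For the first assertion, let $P\in \Q\langle e^0,e^1\rangle$ be any polynomial relation in the $w$'s which holds among motivic multiple zeta values, i.e. $P$ lies in the kernel of $w\mapsto \zetam(w):\Q\langle e^0,e^1\rangle \to \Ho$. Then its image in $\Ao$ under $\pim$ vanishes (since $\pim$ is a homomorphism sending $\zetam(w)$ to $\zetaa(w)$), and hence the same polynomial relation among the $\zetam_{\sv}(w)=W^{\mm}(w)$ vanishes in $\Ho^{\sv}$, because $W^{\mm}:\Ao\to\Ho$ is an algebra homomorphism. This gives the first part of the corollary.

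For the additional relation $\zetam_{\sv}(2)=0$, the key input is the remark stated above that $\zetaa(2)=0$ in $\Ao$ (whereas $\zetam(2)\neq 0$ in $\Ho$). Since $\zetam_{\sv}(2)=W^{\mm}(\pim(\zetam(2)))=W^{\mm}(\zetaa(2))$ and $W^{\mm}:\Ao\to\Ho$ is a homomorphism, the vanishing of $\zetaa(2)$ immediately yields $\zetam_{\sv}(2)=0$.

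There is no genuine obstacle here: the content of the corollary is entirely the observation that $W^{\mm}:\Q\langle e^0,e^1\rangle \to \Ho$ was constructed so as to pass through $\Ao$, and that the passage from $\Ho$ to $\Ao$ by $\pim$ precisely kills $\zetam(2)$. If one wished to emphasize the relationship with the \Lefschetz motivic period, one could further note that $\pim(\LM)=0$ by equation $(\ref{pimlmzero})$ and that $\zetam(2)$ is a rational multiple of $(\LM)^2$, which gives a second derivation of $\zetaa(2)=0$ consistent with the remark.
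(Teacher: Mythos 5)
Your proof is correct and follows essentially the same route as the paper: the corollary is observed there in a single sentence as an immediate consequence of the factorization $\Q\langle e^0,e^1\rangle \to \Ho \xrightarrow{\pim} \Ao \xrightarrow{W^{\mm}} \Ho$, and of the fact (recalled in the preceding remark) that $\zetaa(2)=0$ in $\Ao$, which you spell out carefully.
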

In particular, the  $\zetam_{\sv}(w)$ satisfy the usual double shuffle equations and associator relations. 
Note, however, that the map $\Ao \rightarrow \Ho^{\sv}$ is  not injective, so the elements $\zetam_{\sv}$ satisfy many more relations than their non single-valued counterparts.

We can write the element $W^{\mm}$ as a generating series:
$$W^{\mm} = \sum_w \zetam_{\sv}(w) \,w $$
Its period $\per (W^{\mm})$ is precisely $W$  defined in $(\ref{Wdef})$, which shows that $W$ is an   associator.  Therefore, by corollary \ref{corLo1isW},  
the period of $\zetam_{\sv}(w)$ is given by  the value at $1$ of the corresponding single-valued multiple polylogarithm:
\begin{equation}
\per ( \zetam_{\sv}(w)) = \Lo_w (1) \ . 
\end{equation}

\subsection{Structure of $\Ho^{\sv}$}  \label{sectStructure}
Let $\Ao^{\sv} \subset \Ao$ denote the image of $\Ho^{\sv}$ under the map $\pim$.
\begin{lem}  \label{lemHsvisAsv}
The map $\pim : \Ho^{\sv} \rightarrow \Ao^{\sv}$ is an isomorphism, and  $\Ho^{\sv} \cong \Ao^{\sv}$ is the image of $\Ao$ under the homomorphism
\begin{equation} \label{idsigmaprojection}
\sv_{\!\Ao}:= \id  \circ \sigma^{\circ -1} : \Ao \To \Ao
\end{equation}
where the  multiplication $\circ$ and inverse take place in the group $\A(\Ao)$.
\end{lem}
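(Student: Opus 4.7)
The plan is to identify $W^{\mm}$ with the restriction of the single-valued motivic period $\sv^{\mm}$ of \S\ref{sectSVMP} to the Hopf subalgebra $\Ao \subset \Pe^{\am}$.  Granting this, both halves of the lemma drop out: the formula $\pim W^{\mm} = \sv_{\!\Ao}$ from a direct group-theoretic computation, and the injectivity of $\pim$ on $\Ho^{\sv}$ from lemma \ref{imageofsvm}.

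First I would compute $\pim \circ W^{\mm}$ directly.  The algebra homomorphism $\pim : \Ho \to \Ao$ induces a group homomorphism $\A(\Ho) \to \A(\Ao)$ for the Ihara product $\circ$, since $\A$ is a group scheme and its multiplication on $\Ho$-points is convolution against the Hopf structure on $\Ao$.  Because $\Zo^{\mm} : \Ao \to \Ho$ is a section of $\pim|_{\Ho}$, we have $\pim \circ \Zo^{\mm} = \id_{\Ao}$.  From remark \ref{remsigmabar}, applying the augmentation $\varepsilon : \Pemp \to \Q$ to $\Delta_{\am,\mm} \sigma = (\overline{\sigma}\otimes \sigma) \Delta_{\am,\mm}$ and using $\varepsilon\sigma = \varepsilon$ (since $\sigma$ is trivial in weight zero) yields $\pim \sigma = \sigma \pim$, so $\pim \circ ({}^{\sigma}\!\Zo^{\mm}) = \sigma$ in $\A(\Ao)$.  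Applying $\pim$ to the definition $W^{\mm} = \Zo^{\mm} \circ ({}^{\sigma}\!\Zo^{\mm})^{\circ -1}$ then gives
\[
 \pim \circ W^{\mm} \;=\; \id \circ \sigma^{\circ -1} \;=\; \sv_{\!\Ao} .
\]
Since $\Ho^{\sv} = W^{\mm}(\Ao)$ by construction, this immediately yields $\pim(\Ho^{\sv}) = \sv_{\!\Ao}(\Ao) = \Ao^{\sv}$.

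For injectivity of $\pim$ on $\Ho^{\sv}$, by lemma \ref{imageofsvm} it suffices to show that $\Ho^{\sv} \subset \Pe^{\sv}$.  Using $\Ho \subset \Pemp_{\R}$ (corollary \ref{corRealFrob}) together with $\zetam(2) = -(\LM)^2/24$, a decomposition $(\ref{PempDecomp})$ of $\Pemp$ can be chosen whose associated section $s : \Pe^{\am} \to \Pemp$ sends $\Ao$ into $\Ho$ and restricts there to $\Zo^{\mm}$.  Because $\Ao \hookrightarrow \Pe^{\am}$ is a sub-Hopf-algebra, the group operations on $U_{dR}(\Pemp)$ descend consistently to those on $\A(\Ho)$, so the formula $\sv^{\mm} = s \circ ({}^{\sigma}\!s)^{\circ -1}$ restricts along $\Ao \hookrightarrow \Pe^{\am}$ to recover $W^{\mm}$.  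Hence $\Ho^{\sv} \subset \sv^{\mm}(\Pe^{\am}) = \Pe^{\sv}$, and injectivity follows.  The main technical subtlety is precisely this last compatibility — that the Ihara-style products and inverses in $U_{dR}$ restrict correctly along the quotient $U_{dR} \twoheadrightarrow \A$.  This is formal once one unwinds that $\pim$ is a morphism of comodules, but it is the place where care is needed.
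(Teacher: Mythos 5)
Your argument is correct and is essentially the paper's proof, which simply says ``This follows from lemma \ref{imageofsvm} since $\Ho^{\sv}\subset \Pe^{\sv}$'' and leaves all the details implicit. What you add beyond that one line is a genuine simplification for the second half of the statement: rather than going through $\sv^{\mm}$ and restricting formula $(\ref{idcircsigma})$ to $\Ao$ (which implicitly requires the identification $W^{\mm}=\sv^{\mm}|_{\Ao}$), you compute $\pim W^{\mm}=\sv_{\!\Ao}$ directly by pushing the defining factorization $W^{\mm}=\Zo^{\mm}\circ ({}^{\sigma}\!\Zo^{\mm})^{\circ -1}$ through the group homomorphism $\A(\Ho)\to\A(\Ao)$ and using $\pim\Zo^{\mm}=\id$ and $\pim\sigma=\overline{\sigma}\pim$; this is self-contained and cleaner. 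For injectivity you then argue, as the paper intends, that $\Ho^{\sv}\subset\Pe^{\sv}$, and you correctly identify the only delicate point, namely that a decomposition $(\ref{PempDecomp})$ of $\Pemp$ may be chosen so that its section $s$ restricts on $\Ao$ to $\Zo^{\mm}$ --- this is exactly what the proof of $(\ref{HoisAotenszeta2})$ provides, since that decomposition of $\Ho$ is obtained by restricting a decomposition of $\Pemp$ --- and that the group law of $U_{dR}$ restricts to that of $\A$ because $\Ao\hookrightarrow\Pe^{\am}$ is a map of Hopf algebras. One small notational point: you write $\sigma$ for the involution on $\Ao$, which is strictly speaking $\overline{\sigma}$ from remark \ref{remsigmabar}; the paper uses the same abuse in $(\ref{idcircsigma})$ and $(\ref{idsigmaprojection})$, so this is harmless, but it is worth being aware of.
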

\begin{proof}  This follows from lemma \ref{imageofsvm} since $\Ho^{\sv} \subset \Pe^{\sv}$.
\end{proof} 

 Denote the  Lie coalgebra of indecomposable elements of $\Ao$ by 
$$\Lo=  {\Ao_{>0}  \over  \Ao_{>0}\Ao_{>0}}  \ .$$
Since $\Ao$ is a commutative, graded Hopf algebra, it follows from standard facts that $\Ao$ is isomorphic to the polynomial
algebra generated by the elements of $\Lo$.

\begin{prop} \label{propmain} The algebra $\Ho^{\sv}$ of single-valued motivic multiple zeta values is isomorphic to the polynomial algebra  
generated by elements $\Lo^{\mathrm{odd}}$ of $\Lo$ of odd weight.
\end{prop}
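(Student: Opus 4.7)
The plan is to dualize the description of $\Ao^{\sv}$ to the group scheme $\A$ and trivialize the resulting subscheme using the exponential map. By Lemma \ref{lemHsvisAsv}, it suffices to show $\Ao^{\sv} \cong \Q[\Lo^{\mathrm{odd}}]$, where $\Ao^{\sv} \subset \Ao$ is the image of the algebra morphism $\sv_{\!\Ao} = \id \circ \sigma^{\circ -1}$ (which is an algebra morphism because $\Ao$ is commutative). Dually, $\sv_{\!\Ao} = \phi^*$ for the scheme morphism
$$\phi: \A \To \A, \qquad g \mapsto g \cdot \sigma(g)^{-1}\ ,$$
where $\sigma$ now denotes the group automorphism of $\A$ dual to the Hopf involution $\sigma:\Ao \to \Ao$ (multiplication by $(-1)^n$ in weight $n$).

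Because $\sigma^2 = \id$, a direct check gives $\sigma(\phi(g)) = \sigma(g)\cdot g^{-1} = \phi(g)^{-1}$, so $\phi$ factors through the closed subscheme $\A^- = \{g \in \A : \sigma(g) = g^{-1}\}$. Since $\A$ is pro-unipotent over $\Q$, the exponential map $\exp:\Lie\A \overset{\sim}{\To} \A$ is an isomorphism of affine $\Q$-schemes which intertwines $\sigma$ on source and target; its $-1$-eigenspace is exactly the odd-weight subspace $(\Lie\A)^{\mathrm{odd}}$, whose linear dual is $\Lo^{\mathrm{odd}}$. Thus $\A^-$ corresponds via $\exp$ to the linear subspace $(\Lie\A)^{\mathrm{odd}}$, and
$$\Or(\A^-) \cong S^\bullet(\Lo^{\mathrm{odd}}) \cong \Q[\Lo^{\mathrm{odd}}]\ .$$

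To conclude, one needs scheme-theoretic surjectivity of $\phi$ onto $\A^-$, so that the image $\phi^*(\Or(\A^-)) \subset \Ao$ is exactly $\Ao^{\sv}$. For this, restrict $\phi$ to $\A^-$ itself: here $\phi(g) = g \cdot g = g^2$, which via $\exp$ corresponds to doubling on $(\Lie\A)^{\mathrm{odd}}$ (using the BCH identity $\exp(X)^2 = \exp(2X)$), hence is an isomorphism of schemes. Consequently $\phi$ admits the section $\A^- \hookrightarrow \A$ obtained by post-composing the inverse of $\phi|_{\A^-}$ with the inclusion, and $\phi^* : \Or(\A^-) \hookrightarrow \Ao$ is a split injection whose image is $\Ao^{\sv}$. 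Combining with Lemma \ref{lemHsvisAsv} yields $\Ho^{\sv} \cong \Q[\Lo^{\mathrm{odd}}]$.

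The main obstacle is translating cleanly between group-scheme and Lie-algebra language in the pro-unipotent setting: the naturality of $\exp$ with respect to $\sigma$, the linear identification $\A^- \overset{\sim}{\To}(\Lie\A)^{\mathrm{odd}}$, and the scheme-theoretic (not merely set-theoretic) surjectivity of $\phi$. All three are standard over $\Q$ once the exponential trivialization is in place, making the argument essentially formal.
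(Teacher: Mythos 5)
Your proof is correct, and it takes a genuinely dual route to the paper's. The paper argues entirely on the Hopf-algebra side: writing $\sv_{\!\Ao}=\mu(\id\otimes\sigma^{\circ-1})\Delta$, it observes that $\sv_{\!\Ao}\equiv\id-\sigma$ modulo products, hence the induced map on indecomposables is $\sv_{\!\Lo}=2\pi^{\mathrm{odd}}$, and concludes that $\Ao^{\sv}$ is multiplicatively generated by $\Lo^{\mathrm{odd}}$. You instead pass to the pro-unipotent group scheme $\A$, identify $\sv_{\!\Ao}$ as $\phi^*$ for $\phi(g)=g\,\sigma(g)^{-1}$, cut out the anti-fixed locus $\A^-=\{\sigma(g)=g^{-1}\}$, and use the exponential trivialization $\A^-\cong(\Lie\A)^{\mathrm{odd}}$ together with the observation that $\phi|_{\A^-}$ is squaring (hence, via BCH with $X=Y$, doubling on the Lie algebra, hence an isomorphism) to produce a section of $\psi:\A\twoheadrightarrow\A^-$. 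What your version buys is an explicit justification of \emph{why} the image is a free polynomial algebra on $\Lo^{\mathrm{odd}}$: the split injectivity of $\psi^*$ gives $\Ao^{\sv}\cong\Or(\A^-)\cong\Q[\Lo^{\mathrm{odd}}]$ directly, rather than leaving the reader to check that the indecomposable computation actually forces freeness and the right size for the subalgebra. The paper's version is shorter and stays purely inside $\Ao$, but the step from ``$\sv_{\!\Lo}=2\pi^{\mathrm{odd}}$'' to ``$\Ao^{\sv}$ is the polynomial algebra on $\Lo^{\mathrm{odd}}$'' is terse; your exponential argument (or, equivalently, the consequence $\sv^2=[2]\circ\sv$ and injectivity of $[2]$ on $\A^-$) is a clean way to close that gap.
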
 
\begin{proof}
By lemma  $\ref{lemHsvisAsv}$, $\Ho^{\sv} \cong  \sv_{\!\Ao}(\Ao)$, where $  \sv_{\!\Ao}= \id  \circ \sigma^{\circ -1}$. Since $\sv_{\!\Ao}$ is a homomorphism, it defines a map
$\sv_{\!\Lo} : \Lo \rightarrow \Lo$.
Writing $\sv_{\!\Ao} = \mu ( \id \otimes \sigma^{\circ -1}) \Delta$, where $\Delta : \Ao \rightarrow \Ao\otimes \Ao$ is the coproduct, and $\mu$ is the multiplication on $\Ao$, we see that 
$$\sv_{\!\Ao} \equiv \id - \sigma \quad \hbox{Ê(modulo  products)}$$
and therefore 
$\sv_{\!\Lo} = 2 \, \pi^{\mathrm{odd}}$
where $\pi^{\mathrm{odd}} : \Lo \rightarrow \Lo^{\mathrm{odd}}$ is the projection onto the part of odd weight. It follows that  $\Ao^{\sv}$ is multiplicatively generated by $\Lo^{\mathrm{odd}}$. 
\end{proof}


\subsection{Single-valued multiple polylogarithms revisited} \label{SVMPrevisited}
 We can re-derive the construction of the single-valued multiple polylogarithms of \cite{BSVMP} and \S\ref{sectSVmultpoly}
as follows. 

Consider $X= \Pro^1 \backslash \{0,1,\infty\}$, with the base-points $\{\tone_0, -\tone_1, z\}$ for some $z\in X(\C)$.
Its de Rham fundamental groupoid consists of a copy of $\Q\langle e_0, e_1 \rangle$  for each pair of these base-points. We  shall only consider the copies $\opo, \opi, \opz$,
 corresponding to the canonical de Rham paths between the base-points indicated by their subscripts.

Let $\Aut$ denote the group of automorphisms of $\pi_1(X,\{\tone_0, -\tone_1, z\})$  which preserves the copy of  $e_0$  in $\opo$ and $e_1$  in $\opi$. Modifying \cite{DG}, Proposition 5.9 accordingly,   the  action of $\Aut$ on the elements  $\ooi , \ooz$  defines  an injective map
\begin{eqnarray}
  \Aut  & \hookrightarrow & \opi \times \opz \\
  a &\mapsto  & (a_1, a_z)   \nonumber
  \end{eqnarray}
  where for any $a\in \Aut$, we write $a_z= a(\ooz)$ and $a_1 = a(\ooi)$. We leave to the reader the verification that this is an isomorphism. 
  Since $\opz$ is a left $\opo$-torsor, we immediately deduce a formula for the generalized  Ihara action
\begin{eqnarray} \label{zIhara}
\Aut \times \opz  & \To & \opz \nonumber \\
a \circ b & = &  \langle a_1 \rangle_0 (b)\,  . \, a_z
\end{eqnarray} 
where $\langle a  \rangle_0$ denotes the  action of $a \in \opi$ on $\opo$ (\cite{DG}, (5.9.4)). Concretely, this gives
\begin{eqnarray} \label{Iharaz}
\qquad \quad  (\opi \times  \opz ) \times (\opi \times  \opz )  & \To& (\opi \times  \opz )  \\
(F_1, F_z) \,\, \circ \,\,(G_1,G_z) & = & (G_1(e_0, F_1 e_1 F_1^{-1}) F_1 ,   G_z(e_0, F_1 e_1 F_1^{-1}) F_z) \nonumber 
\end{eqnarray}
The action  of $\opi \times  \opz $ on $\opi$  factors through the usual Ihara action of $\opi$ on $\opi$.

Let us fix a path $\mathrm{ch}_z$ from $\tone_0$ to $z$ in $X(\C)$.  Its de Rham image in $\opz(\C)$ is exactly (some branch of)  the generating series of multiple polylogarithms (\S\ref{sectSVmultpoly})
$$\mathrm{ch}^{dR}_z = L(z) \, \, \in\,  \opz(\C)$$  
By the general single-valued principle, we seek an element $W =(W_1,W_z)$ in  the group $\Aut(\C) \cong \opi(\C)  \times \opz(\C) $ such that 
$$ 
 W \circ ({}^{\sigma}\! Z,  {}^{\sigma}\! L(z) ) =  (Z, L(z)) \ .
 $$
It has a solution since $\opi \times \opz$ is a torsor over $\Aut\cong  \opi \times \opz$.
By the formula for the action $(\ref{Iharaz})$, this is equivalent to the pair of equations
\begin{eqnarray} \label{Wzequation} 
  {}^{\sigma}\! L_{e_0, W_1 e_1 W_1^{-1} }(z)W_z & =  & L(z)\ .  \\
   {}^{\sigma}\! Z(e_0, W_1 e_1 W_1^{-1} ) W_1 & =  & Z\ , \nonumber 
  \end{eqnarray}
and so $W_1 \circ {}^{\sigma}\! Z= Z$, and $W_1$ is equal to the element $W$ defined in $(\ref{Wdef})$. As in \S\ref{sectSVmultpoly}, write  $e_1'=W e_1 W^{-1}$. Therefore by $(\ref{Wzequation})$ we deduce the following formula for $W_z$,
$$W_z =    L_{-e_0, -e_1'}^{-1} (\overline{z})L(z)  $$
It is independent of the choice of path $\mathrm{ch}_z$, and is therefore single-valued. This gives another derivation of the construction in \cite{BSVMP}.

 \section{Generators for $\Ho^{\sv}$ and examples} \label{sectGeneratorsandEx}
 Up to this point we have used no deep results about the category of mixed Tate motives, nor about the structure of motivic multiple zeta values. 

\subsection{Periods of mixed Tate motives}
  In \cite{BMTZ}, it was shown  that 
  \begin{equation} \label{Aoisom} \Ao \cong  \Or(U_{dR})= \Pe^{\am}\ .
  \end{equation}
   The following  proposition, due to Deligne \cite{DLetter},  is a more precise statement about periods of mixed Tate motives than the one stated in \cite{BMTZ}.

\begin{prop} \cite{DLetter} Let $M\in \MT(\Z)$ be a mixed Tate motive over $\Z$ with non-negative weights, i.e., $W_{-1} M =0$. Let 
$\eta \in (M_{dR})_n$ and $X \in M_B^{\vee}$.  

i). If $c(X)=X$ then the motivic period $[M, \eta, X]^{\mm}$ is a rational linear combination of motivic multiple zeta values of weight $n$.

ii). If $c(X)=-X$ then the motivic period $[M, \eta, X]^{\mm}$ is a rational linear combination of motivic multiple zeta values of weight $n-1$, multiplied by $\LM$.
\end{prop}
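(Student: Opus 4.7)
The plan is to exploit the decomposition of geometric motivic periods with respect to the real Frobenius $c$, and combine it with the structural theorem of \cite{BMTZ}, recalled here as $(\ref{Aoisom})$, which identifies $\Pe^{\am}$ with $\Ao$.

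First, I would observe that since $W_{-1}M = 0$, the motivic period $[M,\eta,X]^{\mm}$ lies in the ring of geometric motivic periods $\Pemp$ by lemma \ref{lempemgen}, and by $(\ref{degreeofM})$ it is homogeneous of weight $n$. The real Frobenius acts by $c[M,\eta,X]^{\mm} = [M,\eta,c(X)]^{\mm}$, so the hypothesis $c(X) = X$ (respectively $c(X) = -X$) places $[M,\eta,X]^{\mm}$ in $\Pemp_{\R}$ (respectively $\Pemp_{i\R}$).

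Next, I would invoke corollary \ref{corRealFrob}, which gives non-canonical decompositions
\[
\Pemp_{\R} \cong \Pe^{\am} \otimes \Q[(\LM)^2] \qquad \text{and} \qquad \Pemp_{i\R} \cong \bigl(\Pe^{\am} \otimes \Q[(\LM)^2]\bigr)\,\LM.
\]
Applying the identification $\Pe^{\am} \cong \Ao$ of $(\ref{Aoisom})$, and using that $\zetam(2)$ is a nonzero rational multiple of $(\LM)^2$ by the lemma of \S\ref{sectdecompzeta(2)}, these combine with the decomposition $\Ho \cong \Ao \otimes \Q[\zetam(2)]$ to yield weight-homogeneous isomorphisms
\[
\Pemp_{\R} \cong \Ho \qquad \text{and} \qquad \Pemp_{i\R} \cong \Ho \cdot \LM.
\]
In case (i) the period $[M,\eta,X]^{\mm}$ lies in $\Pemp_{\R}$ and has weight $n$, hence is a rational combination of motivic MZVs of weight $n$. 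In case (ii) it lies in $\Pemp_{i\R}$ and has weight $n$, so it equals $\LM$ times an element of $\Ho$ of weight $n-1$.

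The main obstacle is entirely encapsulated in the deep identification $\Ao \cong \Pe^{\am}$ from \cite{BMTZ}, without which one cannot assert that unipotent de Rham periods of mixed Tate motives over $\Z$ are exhausted by unipotent de Rham MZVs; granted this, the proposition is a formal consequence of the Hopf-algebraic structure of $\Pemp$ and the action of $c$ on the \Lefschetz period.
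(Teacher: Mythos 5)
Your argument is correct and follows the same route as the paper's own proof: both reduce to the identification $\Ho \cong \Pe^{\am}\otimes\Q[(\LM)^2]\cong\Pemp_{\R}$ via $(\ref{Aoisom})$ and corollary \ref{corRealFrob}, and then read off cases (i) and (ii) from the eigenspace decomposition of $c$ and the weight grading. Your write-up is merely more explicit than the paper's three-line version, correctly flagging the homogeneity via $(\ref{degreeofM})$ and the role of $\zetam(2)=-\tfrac{1}{24}(\LM)^2$ in passing from $\Pe^{\am}\otimes\Q[(\LM)^2]$ to $\Ao\otimes\Q[\zetam(2)]$.
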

\begin{proof}  By $(\ref{Aoisom})$, and  \S\ref{sectdecompzeta(2)}, 
  $\Ho \cong \Pe^{\am} \otimes \Q[(\LM)^2] \cong \Pemp_{\R}.$
The  result then follows immediately  from the definitions of $\Pemp_{\R}$ and $\Pemp_{i\R}$, and corollary \ref{corRealFrob}. 
\end{proof}
\noindent 
The methods of \cite{DLetter} give an equivalent but slightly different proof of corollary $\ref{corRealFrob}$.

 \subsection{A model for $\Ho^{\sv}$} 
Applying  a choice of trivialization $(\ref{Pemdecomp})$ and a choice of generators for $\Or(U_{dR})$  to  $(\ref{Aoisom})$ gives a non-canonical isomorphism  \cite{BMTZ}, \S2.5:
  \begin{equation}\label{HisomU}
  \Ho \cong  \U\otimes \Q[f_2]
  \end{equation}
  such that the natural map $\pim: \Ho \rightarrow \Ao$ induces an isomorphism $U \cong \Ao$, 
  and where
  $$\U = \Q\langle f_3, f_5, f_7, \ldots \rangle $$
  is the graded Hopf algebra cogenerated by one generator $f_{2n+1}$ in every odd degree $2n+1\geq 3$, equipped with the shuffle product and 
  the deconcatenation coproduct
  $$\Delta_{dec}( f_{i_1} \ldots f_{i_n}) = \sum_{k=0}^n f_{i_1} \ldots f_{i_k} \otimes f_{i_{k+1}} \ldots f_{i_n}$$
   The element $f_2$ corresponds to $(\LM)^2$ and satisfies $\Delta(f_2) =1 \otimes f_2$.
   
  The map $\sv: \Ao \rightarrow \Ao$ defines a homomorphism  
  \begin{eqnarray} \label{svonU}
  \sv : \U & \To & \U  \\
  w & \mapsto & \sum_{uv=w} u \sha \widetilde{v}\nonumber
  \end{eqnarray} 
  where $\,\,\widetilde{ }\,\,$ denotes reversal of words.  To see this, note that $\sigma: \U \rightarrow \U $ is the map $f_{2n+1} \mapsto - f_{2n+1}$, and the antipode $S$ on $\U$ is 
  given by $w \mapsto \sigma(\widetilde{w})$.
  By formula $(\ref{idsigmaprojection})$, we have $\sv = \mu ( \id \otimes  {}^{\sigma}\! S) \Delta_{dec}$, where $\mu$ is multiplication, which immediately gives  $(\ref{svonU})$.
  
  Since  $\pim : \Ho^{\sv} \cong \Ao^{\sv}$ by lemma \ref{lemHsvisAsv}, we conclude that
  $$\Ho^{\sv} {\cong} \Ao^{\sv} \cong \U^{\sv}$$
  where $\U^{\sv}$ is the image of the map $(\ref{svonU})$. By way of example,
  $$ \sv(f_{a})= 2 f_{a} \quad  \ , \quad  \sv(f_{a}f_{b})=  2 (f_{a} f_{b}+f_{b} f_{a}) $$
  $$\sv(f_{a} f_{b} f_{c})  = 2( f_{a} f_{b} f_{c} + f_{a} f_{c} f_{b} + f_{c}  f_{a} f_{b} +f_{c}  f_{b} f_{a} )$$
  where $a,b,c$ are odd integers $\geq 3$. In general, we have  the formula $(\ref{deltasv})$, which gives
  $$\sv(f_a w f_b) =  f_a \,\sv(wf_b) +  f_b\, \sv (f_a w)$$
  for any word $w \in \{f_{2n+1}\}$, and $a,b$ odd integers $\geq 3$.  This follows immediately from $(\ref{svonU}),$ since, via the recursive definition of $\sha$, we have
  $$ f_a u \sha f_b \widetilde{v} = f_a ( u \sha f_b \widetilde{v}) + f_b ( f_a u \sha \widetilde{v})\ .$$
  
  \subsection{Hoffman-type generators for $\Ho^{\sv}$} 
    Let $V$ be a finite ordered set. A Lyndon word  in the elements of $V$ is a word which is  smaller in the lexicographic ordering than its strict right factors:
    if $w= uv$, then $w<v$ whenever $u, v$ are non-empty.
  
  In (\cite{BMTZ}, \S8), the following theorem was proved.
  
  \begin{thm} \label{thmHoff} The ring of motivic multiple zeta values $\Ho$ is generated by the Hoffman-Lyndon elements
  $\zetam(w)$  where $w$ is a Lyndon word in the alphabet $\{2,3\}$, where $3<2$.
  \end{thm}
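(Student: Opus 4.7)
The plan is to derive the Hoffman-Lyndon generation statement from two inputs: first, the spanning result that $\Ho$ is linearly generated by the Hoffman elements $\zetam(n_1,\ldots,n_r)$ with $n_i\in\{2,3\}$ (this is the main theorem of \cite{BMTZ}); second, the non-canonical isomorphism $\Ho\cong\U\otimes\Q[f_2]$ from $(\ref{HisomU})$, combined with Radford's theorem that the shuffle Hopf algebra $\U=\Q\langle f_3,f_5,\ldots\rangle$ is, as a commutative algebra, polynomial on the Lyndon words in $\{f_3,f_5,\ldots\}$. Since $\zetam(2)$ corresponds to $f_2$ and the word $(2)$ is Hoffman-Lyndon, the task reduces to matching up the higher Hoffman-Lyndon elements with polynomial generators of $\U$.

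First I would define the level filtration $L^\bullet\Ho$ by the number of entries equal to $3$ in a Hoffman representative, and transport it through $\phi:\Ho\overset{\sim}{\To}\U\otimes\Q[f_2]$. On the associated graded, the infinitesimal coaction of \S\ref{sectIharaaction} becomes tractable: for a Hoffman word $w=2^{a_0}3\,2^{a_1}3\cdots 3\,2^{a_k}$, one can write down its image $\phi(\zetam(w))$ modulo $L^{k-1}$ as a sum of monomials $f_{2a_0+3}\cdots$ in the $f_{2i+1}$'s with an explicit leading term indexed by the sequence of ``interval lengths'' between $3$'s. I would then verify that this assignment, restricted to Hoffman-Lyndon words for the order $3<2$, matches (in the sense of leading coefficient) a Lyndon word in the $f_{2i+1}$-alphabet under a natural order-preserving correspondence.

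Next I would set up a triangularity argument: the Hoffman-Lyndon elements of fixed level $k$ and weight $n$ map, modulo $L^{k-1}$, onto a basis of the span of Lyndon monomials of total degree $k$ in $\U$ (times the appropriate power of $f_2$). Combining with the Hoffman spanning theorem, a straightforward induction on level and weight expresses every Hoffman element $\zetam(w)$ as a polynomial in Hoffman-Lyndon elements plus a term of strictly lower level, which by the induction hypothesis is itself such a polynomial. Since the Hoffman elements linearly span $\Ho$, this proves that the Hoffman-Lyndon subset generates $\Ho$ as an algebra.

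The main obstacle is step two: one must verify precisely that the combinatorial assignment sending a Hoffman-Lyndon word $w$ in $\{2,3\}$ to its leading $\phi$-image produces a Lyndon word in $\{f_3,f_5,\ldots\}$, and that this correspondence is injective on the associated graded. This is essentially a statement about compatibility of the Lyndon order $3<2$ on $\{2,3\}$ with the natural order on odd $f_{2i+1}$'s induced by the coaction, and it is where the ordering convention $3<2$ (rather than $2<3$) becomes forced. All other steps reduce to standard features of shuffle Hopf algebras and to the spanning theorem of \cite{BMTZ}.
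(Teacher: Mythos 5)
The present paper does not prove Theorem~\ref{thmHoff}: it is quoted verbatim from \cite{BMTZ}, \S 8, and no argument is given in this source. The comparison must therefore be with that reference rather than with the paper itself.

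Against that standard, your outline uses the correct inputs (the Hoffman spanning/basis theorem of \cite{BMTZ} and the non-canonical isomorphism $(\ref{HisomU})$ with Radford's theorem for the shuffle algebra $\U$), and the correct strategy: filter by the level (number of $3$'s) and exploit the triangularity of the infinitesimal coaction operators $D_{2r+1}$, which decrease the level by exactly one, to match Hoffman words in $\{2,3\}$ with monomials in the $f_{2i+1}$'s. This is how \cite{BMTZ} proceeds. The place where the sketch is genuinely thin --- and you acknowledge it --- is the leading-term computation itself. For $w = 2^{a_0}3\,2^{a_1}\cdots 3\,2^{a_k}$ there are $k$ threes but $k+1$ blocks of $2$'s, and the identification of the leading monomial in the $f_{2i+1}$'s (times a power of $f_2$) is not simply an order-combinatorial statement: the leading coefficients come from the motivic evaluations of $\zeta^{\mm}(2,\ldots,2,3,2,\ldots,2)$, and their non-vanishing is the content of a non-trivial ($2$-adic) lemma in \cite{BMTZ}. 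Without that input the triangularity, and hence the order-compatible bijection you invoke between Hoffman--Lyndon words for $3<2$ and Lyndon words in $\{f_3<f_5<\cdots\}$, is not established. Subject to supplying that computation, your route is essentially the one taken in the cited source, and the overall plan is sound.
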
 
  It immediately follows from proposition \ref{propmain} that 
  \begin{cor}The ring of  single-valued motivic multiple zeta values $\Ho^{\sv}$ is generated by the Hoffman-Lyndon elements
  $$\zetam_{\sv}(w)$$  where $w$ is a Lyndon word of odd weight  in the alphabet $\{2,3\}$, where $3<2$.
  \end{cor}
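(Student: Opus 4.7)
The plan is to deduce the corollary directly from Theorem \ref{thmHoff} and Proposition \ref{propmain}, using the explicit form of $\sv_{\!\Lo}$ extracted in the proof of the latter.

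First, I would push Theorem \ref{thmHoff} through the surjection $\pim\colon\Ho\twoheadrightarrow\Ao$. Since $\zetaa(2)=\pim(\zetam(2))=0$, the theorem implies that $\Ao$ is generated as a $\Q$-algebra by the set $\mathcal{S}=\{\zetaa(w):w\text{ a Lyndon word in }\{2,3\},\,w\neq 2\}$. Consequently the images of the elements of $\mathcal{S}$ span the Lie coalgebra $\Lo=\Ao_{>0}/\Ao_{>0}^2$, and a fortiori the images of those $w$ of odd weight span $\Lo^{\mathrm{odd}}$.

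Second, by Lemma \ref{lemHsvisAsv} the map $\pim$ restricts to an isomorphism $\Ho^{\sv}\xrightarrow{\sim}\Ao^{\sv}$ with $\pim\circ\svm=\sv_{\!\Ao}=\id\circ\sigma^{\circ -1}$, and the proof of Proposition \ref{propmain} identifies the induced map on the Lie coalgebra as $\sv_{\!\Lo}=2\pi^{\mathrm{odd}}$. Hence for a Lyndon word $w\neq 2$ in $\{2,3\}$, the class of $\pim(\zetam_{\sv}(w))=\sv_{\!\Ao}(\zetaa(w))$ in $\Lo$ equals twice the class of $\zetaa(w)$ when $w$ has odd weight and vanishes when $w$ has even weight. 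Combined with the previous step, the classes modulo products of the odd-weight elements $\zetam_{\sv}(w)$ span $\Lo^{\mathrm{odd}}$.

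To finish, Proposition \ref{propmain} says that $\Ho^{\sv}$ is the polynomial algebra on $\Lo^{\mathrm{odd}}$, and in a connected graded commutative $\Q$-algebra any family whose classes modulo decomposables span the indecomposables must generate the whole algebra. Applied to the odd-weight Hoffman--Lyndon elements, this yields the corollary. The only non-formal ingredient is the transfer, via Lemma \ref{lemHsvisAsv} and the identity $\sv_{\!\Lo}=2\pi^{\mathrm{odd}}$, of Lyndon-generation from $\Ao$ to $\Ho^{\sv}$, together with the observation that the even-weight Lyndon generators map to zero modulo products; this is the only potentially subtle point, and it is handled cleanly by the computation already carried out in the proof of Proposition \ref{propmain}.
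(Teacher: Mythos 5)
Your proof is correct and fills in precisely the steps the paper leaves implicit when it says the corollary "immediately follows" from Proposition \ref{propmain} and Theorem \ref{thmHoff}: push the Hoffman--Lyndon generators of $\Ho$ through $\pim$ to get generators of $\Ao$ (discarding $\zetaa(2)=0$), use $\sv_{\!\Lo}=2\pi^{\mathrm{odd}}$ from the proof of Proposition \ref{propmain} together with Lemma \ref{lemHsvisAsv} to see that the odd-weight ones map to a spanning set of $\Lo^{\mathrm{odd}}$ modulo decomposables, and conclude by the standard graded Nakayama argument. This is the paper's intended argument, spelled out in full.
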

  A Hoffman-Lyndon word of odd weight necessarily has an odd number of $3$'s.
  It follows from theorem \ref{thmHoff} that the Poincar\'e series of $\Ho$ is given by 
  $$
  \sum_{n\geq 0 } \dim \Ho_n \, t^n  = {1 \over 1-t^2-t^3}\ .
  $$
  The dimensions $\ell_n = \dim \Lo_n$ of the Lie coalgebra $\Lo$ are determined by 
  $$ \prod_{n\geq 1} (1-t^n)^{-\ell_n} = { 1 \over 1 -t^2-t^3}\ .$$
  The numbers $\ell_n$ can be interpreted  either as the number of Lyndon words of weight $n$ in $\{2,3\}$, where $3<2$, or as the number of Lyndon words of weight $n$
  in the alphabet $\{f_3<f_5< f_7,\ldots, \}$, via the isomorphism $(\ref{HisomU})$.
   By proposition \ref{propmain},
  \begin{cor}
  The Poincar\'e series of $\Ho^{\sv}$ is given by 
  $$ \sum_{n\geq 0 } \dim \Ho^{\sv}_n \, t^n  =  \prod_{n\, \mathrm{odd} \geq 1} (1-t^n)^{-\ell_n} \ .$$
  \end{cor}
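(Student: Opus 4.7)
The plan is to derive the Poincar\'e series of $\Ho^{\sv}$ directly from proposition \ref{propmain}, which identifies $\Ho^{\sv}$ with the polynomial algebra on $\Lo^{\mathrm{odd}}$.  The Poincar\'e series of a graded polynomial algebra with $d_n$ generators in each weight $n$ is the standard product $\prod_{n\geq 1} (1-t^n)^{-d_n}$, so the essential content is the identification of the number of generators in each odd weight.

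First I would record that, by proposition \ref{propmain}, $\Ho^{\sv}$ is isomorphic as a graded $\Q$-algebra to the symmetric algebra on the graded vector space $\Lo^{\mathrm{odd}} = \bigoplus_{n\,\text{odd}} \Lo_n$.  In particular $\dim(\Lo^{\mathrm{odd}})_n = \ell_n$ for odd $n$ and $0$ for even $n$, where $\ell_n = \dim \Lo_n$ are the dimensions introduced just before the corollary.

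Next, since $\Ho^{\sv}$ is a weight-graded polynomial algebra on $\Lo^{\mathrm{odd}}$, its Poincar\'e series factors as
\[
\sum_{n\geq 0} \dim \Ho^{\sv}_n\, t^n \;=\; \prod_{n\,\mathrm{odd} \geq 1} (1-t^n)^{-\ell_n},
\]
because each odd-weight generator of weight $n$ contributes a factor $(1-t^n)^{-1}$ and the algebra is the tensor product (over $\Q$) of the corresponding one-variable polynomial rings, one for each basis vector of $\Lo_n$.  The lower bound of the product is $n\geq 3$ in practice since $\ell_1=0$ and the first odd $\ell_n$ which could contribute is $\ell_3=1$; but including $n=1$ is harmless because $\ell_1=0$.

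There is no real obstacle here: the only substantive input is proposition \ref{propmain}, already proved.  The slight point to be careful about is that the numbers $\ell_n$ have been defined by the generating function $\prod_{n\geq 1}(1-t^n)^{-\ell_n} = (1-t^2-t^3)^{-1}$ rather than intrinsically as $\dim \Lo_n$; one should therefore note explicitly that this identity is exactly the Poincar\'e series identity
$\sum_n \dim \Ao_n\, t^n = \prod_n (1-t^n)^{-\ell_n}$
following from $\Ao \cong \mathrm{Sym}(\Lo)$ (standard for a commutative graded connected Hopf algebra over $\Q$) combined with $\sum_n \dim \Ho_n t^n = (1-t^2-t^3)^{-1}$ from theorem \ref{thmHoff} and the isomorphism $\Ho \cong \Ao \otimes \Q[\zetam(2)]$.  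With these identifications in place, the claimed formula is immediate.
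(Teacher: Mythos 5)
Your argument is correct and follows exactly the paper's route: the corollary is stated as an immediate consequence of Proposition \ref{propmain}, which identifies $\Ho^{\sv}$ with the polynomial algebra on $\Lo^{\mathrm{odd}}$, and the Poincar\'e series formula is then the standard one for a graded polynomial algebra. Your extra remark reconciling the two readings of $\ell_n$ (as $\dim \Lo_n$ versus via the generating-function identity) is a sensible clarification but does not change the substance; the paper already defines $\ell_n = \dim \Lo_n$ and records the generating identity as a consequence of $\Ao \cong \mathrm{Sym}(\Lo)$.
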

 \subsection{Examples} \label{sectExamples}
 For the convenience of the reader, we list the dimensions of the space of motivic multiple zeta values $\Ho$ and its version modulo products $\Lo$:

 {\small
 $$
\begin{array}{c|cccccccccccccccccccc}
N & 1 & 2   & 3  & 4 & 5   & 6 & 7 & 8 & 9 & 10 & 11 & 12 & 13   & 14 & 15 & 16 & 17 & 18 & 19 & 20 \\ \hline
\dim \Lo_N &    0 &  1 & 1 & 0 & 1&  0&  1&  1&  1&  1&  2&  2&  3&  3&  4&  5&  7&  8&  11&  13   \\
\dim \Ho_N  &   1 &1 & 1 &4 &2  & 2 & 3& 4 & 5 & 7 & 9 & 12& 16 &21 & 28 & 37 & 49 & 65 &86 &114   \\
\end{array}
$$}

 \noindent Next,  their single-valued
 versions $\Ho^{\sv}$ and $\Lo^{\sv}$:

 {\small
 $$
\begin{array}{c|cccccccccccccccccccc}
N & 1 & 2   & 3  & 4 & 5   & 6 & 7 & 8 & 9 & 10 & 11 & 12 & 13   & 14 & 15 & 16 & 17 & 18 & 19 & 20 \\ \hline
\dim \Lo^{\sv}_N &    0 &  0 & 1 & 0 & 1&  0&  1&  0 &  1&  0&  2& 0&  3&  0&  4&  0&  7&  0&  11&  0   \\
\dim \Ho^{\sv}_N  &  1 & 0 & 1 & 0  &  1 & 1 & 1 & 1 &2 & 2 & 3 & 3 & 5 & 5 & 8 &8 & 13 & 14 & 21 & 23  \\
\end{array}
$$}

Note that  $\dim_{\Q} \Ho^{\sv}_N$ happens to equal $\dim_{\Q} \Lo_{N+2}$ for $1\leq N \leq 12$, which adds to the large supply of  evidence for exercising caution when identifying integer sequences!

Below we list   algebra generators for $\Ho^{\sv}_N$ for $1\leq N\leq 14$. They were calculated by Oliver Schnetz using \cite{SG}, which gives a very efficient way to compute $(\ref{Wdef})$  \cite{SI}. 
$$\begin{array}{|c|c|c|c|c|c|c|c|}
\hline
  N &  3   & 5 &   7 &   9 &   11 &  13 \\
  \hline
\hbox{Generators}  &  \zetam_{\sv}(3) & \zetam_{\sv}(5) &    \zetam_{\sv}(7) &  \zetam_{\sv}(9) &  \zetam_{\sv}(11) &  \zetam_{\sv}(13)  \\
\hbox{ of } & & &  &   &  \zetam_{\sv}(3,5,3) &  \zetam_{\sv}(5,3,5)  \\
\Ho_{\sv} & & &  &   &  &  \zetam_{\sv}(3,7,3)  \\
  \hline
\end{array}
$$
Here, $\zetam_{\sv}(2n+1) = 2 \,\zetam(2n+1)$ for all $n\geq 1$, and 
\vspace{0.05in}
{
 \begin{eqnarray} \label{zmsvexamples} 
\zetam_{\sv} (3,5, 3) & =&  2 \zetam(3,5,3) - 2 \zetam(3) \zetam(3,5) -10   \zetam(3)^2 \zetam(5) \\
\zetam_{\sv} (5,3, 5) & =&  2 \zetam(5,3,5) - 22 \zetam(5) \zetam(3,5)- 120 \zetam(5)^2\zetam(3)  \nonumber \\
& &-10  \zetam(5) \zetam(8)  \nonumber   \\ 
\zetam_{\sv} (3,7,3) & =&   2 \zetam(3,7,3) -2 \zetam(3) \zetam(3,7) - 28 \zetam(3)^2\zetam(7)    \nonumber \\
&  &   -24 \zetam(5) \zetam(3,5) - 144 \zetam(5)^2 \zetam(3)     - 12 \zetam(5) \zetam(8) \ .  \nonumber 
\end{eqnarray}
}
\vspace{-0.1in}

  \begin{rem} The  generating series of  unipotent de Rham multiple zetas  in depth $r$ is
 \begin{eqnarray}
 Z_r(x_1,\ldots, x_r)= \sum_{n_1,\ldots, n_r \geq 1} \zetaa(n_1,\ldots, n_r) x_1^{n_1-1}\ldots x_r^{n_r-1}\ .
 \end{eqnarray} 
 Let $Z^{\sv}_r$ denote the corresponding  single-valued version.  Then using the methods of \cite{BrDepth} we 
 can  verify that
 \begin{eqnarray}
 Z_1^{\sv}  & =  & Z_1 - {}^{\sigma}\! Z_1 \nonumber \\
 Z_2^{\sv}& \equiv &  Z_2 - {}^{\sigma}\! Z_2-  2 Z_1\circb {}^{\sigma}\!  Z_1    \nonumber \\
  Z_3^{\sv}   &\equiv  & Z_3 - {}^{\sigma}\! Z_3-  2  Z_1  \circb {}^{\sigma}\! Z_2 -   2 Z_1 \circb (Z_1 \circb {}^{\sigma}\! Z_1)
  \nonumber 
 \end{eqnarray}
 where the equivalence sign means modulo $\zetam(2)$ and modulo terms of lower depth, and 
 where $\circb$ is the linearized Ihara operator defined in \cite{BrDepth}, \S6.  For example:
\begin{eqnarray}
f(x_1) \circb g(x_1) & =  &f(x_{1})g(x_{2})+f(x_{2}-x_{1})\big( g(x_1)-g(x_{2}) \big) \nonumber \\
f(x_1) \circb g(x_1,x_2)  &=  & f(x_{1}) g(x_{2}, x_{3})+f(x_{2}-x_{1}) (g(x_{1}, x_{3})-g(x_{2}, x_{3}))\nonumber \\
 & &+ \qquad f(x_{3}-x_{2}) (g(x_{1}, x_{2})-g(x_{1}, x_{3}))  \nonumber 
\end{eqnarray}
 In  particular, this confirms the formulae  $(\ref{zmsvexamples})$ in odd weights, modulo $\zetam(2)$.
 \end{rem}

\bibliographystyle{plain}
\bibliography{main}

\begin{thebibliography}{99}


\bibitem{AnG} {\bf Y. Andr\'e}: {\it Galois theory, motives, and transcendental number theory}, arXiv:0805.2569.

\bibitem{An} {\bf Y. Andr\'e}: {\it Une introduction aux motifs}, Panoramas et Synth\`eses 17, SMF (2004).


\bibitem{BeDe}  {\bf A. Beilinson, P. Deligne}: {\it Interpr\'etation motivique de la conjecture de Zagier reliant polylogarithmes et r\'egulateurs}, Proc. Sympos. Pure Math., 55, Part 2, Amer. Math. Soc., Providence, RI, 1994. 



\bibitem{BK}{\bf D. Broadhurst, D.  Kreimer}: {\it  Knots and numbers in $\phi^4$ theory to 7 loops
and beyond}, Int. J. Mod. Phys. C 6, 519 (1995).


\bibitem{BMS}  {\bf A. Beilinson, R. Macpherson, V. Schechtman}:  {\it Notes on motivic cohomology,} Duke Math. J., 1987 vol 55 p. 679-710


\bibitem{BGSV}  {\bf  A. Beilinson, A. Goncharov, V. Schechtman, A. Varchenko}: {\it  Aomoto dilogarithms, mixed Hodge structures and motivic cohomology of a pair of triangles in the plane}, The Grothendieck Feschtrift, Birkhauser, 1990, p 131-172



\bibitem{BMTZ} {\bf F. Brown}:  {\it  Mixed Tate motives over $\Z$}, Annals of Math., volume 175, no. 1 (2012). 


\bibitem{BSVMP}{\bf F. Brown}: {\it Single-valued  multiple polylogarithms in one variable},
C.R. Acad. Sci. Paris, Ser. I {\bf 338} (2004),  527-532.

\bibitem{BrDepth} {\bf F. Brown}: {\it  Depth-graded motivic multiple zeta values}, http://arxiv.org/abs/1301.3053.

\bibitem{DeTannaka} {\bf P. Deligne}: {\it Cat\'egories Tannakiennes}, Grothendieck Festschrift, vol. II, Birkh\"auser Progress in Math. 87 (1990) pp.111-195.



\bibitem{DeP1} {\bf P. Deligne}: {\it Le groupe fondamental de la droite projective moins trois points}, Galois groups over Q (Berkeley, CA, 1987), 79Ð297,
Math. Sci. Res. Inst. Publ., 16, Springer, New York, 1989




\bibitem{DB} {\bf P. Deligne}: {\it  Multiz\^etas}, S\'eminaire Bourbaki (2012).



\bibitem{DLetter} {\bf P. Deligne}: {\it Letter to Brown and Zagier},  28 april 2012.


\bibitem{DG} {\bf P. Deligne, A. B. Goncharov}: {\it  Groupes fondamentaux motiviques de Tate mixte}, Ann. Sci. \'Ecole Norm. Sup.  38 (2005), 1--56. 


\bibitem{Dixon} {\bf   L. Dixon, C. Duhr, J. Pennington}: { \it  Single-valued harmonic polylogarithms and the multi-Regge limit}, arXiv:1207.0186

\bibitem{Duhr} {\bf F. Chavez, C. Duhr}: {\it Three-mass triangle integrals and single-valued polylogarithms}, arXiv: 1209:2722 

\bibitem{GoMT} {\bf A. B.  Goncharov}: {\it Multiple polylogarithms and mixed Tate motives}, preprint arXiv:math.AG/0103059. 



\bibitem{GoHyp} {\bf  A.B. Goncharov}:
 {\em Volumes of hyperbolic manifolds
 and mixed Tate motives}, Journal AMS, {\bf 12}, No. 2, (1999), pp.
 569-618.


\bibitem{Wrap} {\bf S. Leurent, D. Volin}: {\it Multiple zeta functions and double wrapping in planar N=4 SYM}, arXiv:1302.1135


\bibitem{ML} {\bf M. Levine}: {\it Tate motives and the vanishing conjectures for algebraic K-theory}, Algebraic K-theory and algebraic topology (Lake Louise, AB, 1991), 167Ð188, 

\bibitem{SG} {\bf O. Schnetz}: {\it Graphical functions and single-valued multiple polylogarithms},   \url{arXiv:1302.6445}

\bibitem{SI} {\bf O. Schnetz}: {\it Zeta procedures},  \url{http://www.mathematik.hu-berlin.de/~kreimer/index.php?section=program}

\bibitem{SS} {\bf   O. Schlotterer, S. Stieberger}: {\it  Motivic Multiple Zeta Values and Superstring Amplitudes}, arXiv:1205.1516


\bibitem{W} {\bf Z. Wojtkowiak}, {\it A construction of analogs of the Bloch-Wigner function}, Math. Scand. 65 (1) (1989) 140-142. 


\bibitem{DZ} {\bf D. Zagier}, {\it  The Bloch-Wigner-Ramakrishnan polylogarithm function}, Math. Ann. 286 (1Ð3) (1990) 613-624.

\end{thebibliography}

\end{document}